\theoremstyle{plain}
\newtheorem{theorem}{Theorem}[section]
\newtheorem{corollary}[theorem]{Corollary}
\newtheorem{lemma}[theorem]{Lemma}
\newtheorem{proposition}[theorem]{Proposition}
\newtheorem{conjecture}[theorem]{Conjecture}
\newtheorem{question}[theorem]{Question}
\theoremstyle{definition}
\newtheorem{remark}[theorem]{Remark}
\newtheorem*{theoremm}{Theorem}
\theoremstyle{definition}
\newtheorem{definition}[theorem]{Definition}
\def\makeautorefname#1#2{\expandafter\def\csname#1autorefname\endcsname{#2}}
\definecolor{shadecolor}{gray}{0.90}
\def\boitegrise#1#2{\medskip\begin{centerline}{\fcolorbox{black}{shadecolor}{
    \begin{minipage}[t]{#2}{\vphantom{~}#1\vphantom{$A_{\displaystyle{A_A}}$}}
            \end{minipage}~}}\end{centerline}\medskip}
\DeclareMathOperator{\id}{id}
\DeclareMathOperator{\Hom}{Hom}
\DeclareMathOperator{\End}{End}
\DeclareMathOperator{\Rep}{Rep}
\newcommand{\gl}{\mathfrak{gl}}
\newcommand{\qbinom}[2]{\genfrac{[}{]}{0pt}{}{#1}{#2}}
\newcommand{\moddet}[1]{\det\nolimits_{#1}}
\renewcommand{\tilde}[1]{\widetilde{#1}}
\DeclareMathOperator{\abs}{abs}
\DeclareMathOperator{\Gr}{Gr}
\DeclareMathOperator{\Irr}{Irr}
\DeclareMathOperator{\ev}{ev}
\DeclareMathOperator{\coev}{coev}
\DeclareMathOperator{\Tr}{Tr}
\DeclareMathOperator{\op}{op}
\DeclareMathOperator{\sym}{sym}
\DeclareMathOperator{\sh}{sh}
\DeclareMathOperator{\elem}{elem}
\DeclareMathOperator{\sVect}{sVect}
\DeclareMathOperator{\Fr}{Fr}
\definecolor{linkcolor}{rgb}{0,0,1} % couleur des liens (bleu foncé)
\title{Fourier matrices for $G(d,1,n)$ from quantum general linear groups}
\author{Abel Lacabanne}
\address{Institut de Recherche en Math\'ematique et Physique\\
Universit\'e catholique de Louvain\\ 
Chemin du Cyclotron 2\\ 
1348 Louvain-la-Neuve\\ 
Belgium}
\thanks{A.L. is a Postdoctoral Researcher of the Fonds de la Recherche Scientifique-FNRS}
\email{abel.lacabanne@uclouvain.be}
\begin{document}

%%%%%%%%%%%%%%%%%%%%%%%%%%%%%%%%%%%%
%												%
%	Abstract										%
%												%
%%%%%%%%%%%%%%%%%%%%%%%%%%%%%%%%%%%%

\begin{abstract}
  We construct a categorification of the modular data associated with every family of unipotent characters of the spetsial complex reflection group $G(d,1,n)$. The construction of the category follows the decomposition of the Fourier matrix as a Kronecker tensor product of exterior powers of the character table $S$ of the cyclic group of order $d$. The representation of the quantum universal enveloping algebra of the general linear Lie algebra $\mathfrak{gl}_m$, with quantum parameter an even root of unity of order $2d$, provides a categorical interpretation of the matrix $\bigwedge^m S$. We also prove some positivity conjectures of Cuntz at the decategorified level.
\end{abstract}

%%%%%%%%%%%%%%%%	End of file	%%%%%%%%%%%%%

%%% Local Variables:
%%% mode: latex
%%% TeX-master: "../cat_exterior_powers_gd1n"
%%% End:

\maketitle
% \begin{center}
% \textcolor{red}{\textsc{\textbf{Preliminary Version}}}: %\\[2ex]
% v.0.9 -\today \ \  (\currenttime) \\ \bigskip 
% \end{center}

%%%%%%%%%%%%%%%%%%%%%%
%       Intro        %
%%%%%%%%%%%%%%%%%%%%%%

%%%%%%%%%%%%%%%%%%%%%%%%%%%%%%%%%%%%%%%%%%%%%%%
%                 		              %
%               Introduction                  %
%                 			      %
%%%%%%%%%%%%%%%%%%%%%%%%%%%%%%%%%%%%%%%%%%%%%%%

%%%%%%%%%%%%%%%%  End of file	%%%%%%%%%%%%%

In the theory of representations of finite groups of Lie type, unipotents characters are an important object of study. Indeed, these characters are the building blocks for the irreducible characters. They have been classified by Lusztig and are partitioned into families. To each family of unipotent characters, Lusztig has associated a modular datum consisting of a Fourier matrix and of the eigenvalues of the Frobenius.

One important observation is that the classification of unipotents characters does not depend on the finite field of definition of the reductive group, and only depends on the structure of the Weyl group $W$, neither do the modular data. It has been later realized by Lusztig \cite{lusztig-unipotent-coxeter}, that one can define a similar set of ``unipotent characters'' for a finite Coxeter group which is not a Weyl group, together with polynomials which share similar properties with the degrees of unipotent characters of a a finite group of Lie type. Thereafter, modular data associated with families of unipotent characters of finite Coxeter groups have also been constructed \cite{lusztig-exotic}.

As Weyl groups are rational reflection groups and Coxeter groups are real reflection groups, similar combinatorial version of unipotent characters for a certain class of complex reflection groups, called ``spetsial groups'', have been studied by Broué, Malle and Michel \cite{spetsesI,spetsesII}. Malle \cite{unipotente} has tackled the case of spetsial imprimitive complex reflection groups, by defining unipotent characters and their degrees, families of characters and a modular data for each family. The combinatorics developed by Malle is a generalization of Lusztig's combinatorics for the classification of unipotent characters of a group of Lie type with Weyl group of type $B$.

\medskip

In this article, we will study the case of the complex reflection group $G(d,1,n)$ and the problem of categorifying the modular data associated to the families of unipotent characters. Indeed, modular categories are known to produce modular data, and given a modular datum, it is a classical problem to determine whether it arises from a category or not. As the modular data associated with a family of unipotent characters of $G(d,1,n)$ does not satisfy a positivity property, one needs to use triangulated or super categories.

A first step in this direction has been achieved by Bonnafé and Rouquier \cite{bonnafe-rouquier}: they gave a categorical interpretation of the modular data associated with the unique non trivial family of unipotent characters of the cyclic group $G(d,1,1)$. Their category is constructed from the Drinfeld double of the Taft algebra, which is a finite dimensional version of the quantum enveloping algebra of the standard Borel of $\mathfrak{sl}_2$. In \cite{slightly-deg}, the author explained how to reinterpret the category of Bonnafé and Rouquier into the framework of slightly degenerate categories. This framework turned out to be well adapted for the problem of categorifying modular data: the modular data of some families of the complex reflection group $G(d,1,n)$ arise from the representation of the Drinfeld double of the quantum enveloping algebra of the standard Borel of $\mathfrak{sl}_m$ \cite{drinfeld-double-modular-data}.

The aim of this paper is to give a categorical interpretation of every modular data arising from a family of unipotent characters of $G(d,1,n)$. More precisely, Cuntz \cite{cuntz-fusion} has noticed that the Fourier matrix of any family of unipotent characters of $G(d,1,n)$ is obtained from the Kronecker tensor product of some elementary building blocks of the form $\bigwedge^m S$ with $S$ being the renormalized character table of the cyclic group of order $d$. This matrix $\bigwedge^m S$, together with a diagonal matrix $\bigwedge^mT$ define a modular datum. Therefore, we will first construct a categorification of these building blocks which will then give a categorification of the Fourier matrix of any family of unipotent characters of $G(d,1,n)$.

The categorical interpretation of $\bigwedge^m S$ will be obtained using representations of the quantum group $\mathcal{U}_{\xi}(\mathfrak{gl}_m)$ where $\xi$ is an even root of unity of order $2d$. More precisely, we consider the semisimplification of the category of tilting modules, as in the classical construction for simple Lie algebra. But this will produce a category with an infinite number of non-isomorphic simple objects. Fortunately, many of the invertible objects of this category lie in the symmetric center and one can modularize this category in order to obtain a category that we denote by $\tilde{\mathcal{D}}_{m,\xi}$. The category $\tilde{\mathcal{D}}_{m,\xi}$ admits many pivotal structure and the choice of a pivotal structure is related to the choice of the unit element in the fusion algebra defined by $\bigwedge^m S$.

\begin{theoremm}[\cref{thm:categorification}]
  The category $\tilde{\mathcal{D}}_{m,\xi}$ is a categorification of the modular datum defined by $\bigwedge^m S$ and $\bigwedge^m T$.
\end{theoremm}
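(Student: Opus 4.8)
The plan is to compare two modular data: the one coming from the category $\tilde{\mathcal{D}}_{m,\xi}$ (its $S$-matrix, given by normalized categorical traces of double braidings, and its $T$-matrix, given by twists) and the one defined combinatorially by $\bigwedge^m S$ and $\bigwedge^m T$. Since both live on the same index set — the simple objects of $\tilde{\mathcal{D}}_{m,\xi}$ should be parametrized by $m$-element subsets of $\bZ/d\bZ$, matching the rows and columns of $\bigwedge^m S$ — the strategy is to exhibit an explicit bijection between these index sets and then check that the $S$- and $T$-entries agree under this bijection, up to the global normalization and possibly a Galois twist. First I would recall the structure of the semisimplification of $\Rep \mathcal{U}_\xi(\gl_m)$: its simple objects are indexed by certain dominant weights in the fundamental alcove, and the fusion rules are governed by a truncated version of the Littlewood–Richardson rule, i.e.\ by fusion at level related to $d$. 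Then I would identify the symmetric center (the transparent invertible objects coming from the determinant representation and its powers) and pass to the modularization $\tilde{\mathcal{D}}_{m,\xi}$, so that the simple objects become indexed by orbits, which one checks are in bijection with $m$-subsets of $\bZ/d\bZ$.

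The key computational step is the $S$-matrix. The $S$-matrix of the semisimplified tilting category for $\mathcal{U}_\xi(\gl_m)$ is computed from quantum characters evaluated at roots of unity; concretely, for $\gl_m$ these are Schur polynomials $s_\lambda$ evaluated at $(\xi^{a_1},\dots,\xi^{a_m})$ for appropriate exponents. The entries of $\bigwedge^m S$, on the other hand, are — by the Cauchy–Binet / minor expansion defining exterior powers of a matrix — exactly $m\times m$ minors of the character table $S$ of $\bZ/d\bZ$, which is (a normalization of) the Vandermonde-type matrix $(\zeta^{jk})_{j,k}$ with $\zeta=\xi^2$ a primitive $d$-th root of unity. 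A Weyl-type determinant formula identifies such minors with products $\prod (\zeta^{a_i}-\zeta^{a_j})^{-1}\det(\zeta^{a_i b_j})$, and this is precisely the shape of the quantum $S$-matrix entries after modularization. So the heart of the proof is a determinant identity: the categorical $S$-matrix entry indexed by two $m$-subsets $A,B$ equals, up to the overall scalar, the $(A,B)$-minor of $S$. I would also verify the $T$-matrix: the twist on the simple object indexed by a weight is $\xi^{\langle\lambda,\lambda+2\rho\rangle}$ up to a root of unity, and one checks this matches $\bigwedge^m T$, whose diagonal entries are products $\prod_{k\in A} t_k$ of the eigenvalues of $T$ for the cyclic group.

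After the $S$- and $T$-entries are matched, the remaining points are formal but need care: one must check that $\tilde{\mathcal{D}}_{m,\xi}$ is genuinely modular (non-degeneracy of the braiding after modularization — this follows from the general theory of modularization once the symmetric center has been correctly identified as exactly the transparent objects killed by the quotient), that the chosen pivotal structure makes the categorical dimensions and the $S$-matrix normalization line up with the conventions used by Cuntz and Malle (here the freedom in the pivotal structure corresponds, as remarked before the theorem, to the choice of special object / unit in the fusion algebra of $\bigwedge^m S$, so one selects the pivotal structure accordingly), and finally that the Gauss sum / central charge relation $\sum_i \theta_i (\dim X_i)^2$ is consistent. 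I expect the main obstacle to be the $S$-matrix determinant identity together with bookkeeping of the normalizations: relating the quantum-group normalization of $S$ (which involves the global dimension $\sqrt{\sum \dim^2}$ and quantum integers) to Malle's normalization of $\bigwedge^m S$ (built from the unitary $S$-matrix of $\bZ/d\bZ$) requires tracking several root-of-unity and sign factors, and ensuring the modularization does not introduce an extra scalar ambiguity. The alcove combinatorics — showing the fundamental alcove for $\gl_m$ at this root of unity, modulo the determinant twists, biject with $m$-subsets of $\bZ/d\bZ$ — is the other place where a clean lemma is needed, but it should reduce to elementary manipulation of the inequalities $0\le \lambda_m \le \cdots \le \lambda_1$ and $\lambda_1-\lambda_m \le d-m$.
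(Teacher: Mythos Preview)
Your plan is correct and follows essentially the same route as the paper: set up the bijection between simple objects of $\tilde{\mathcal{D}}_{m,\xi}$ and $m$-subsets of $\{0,\dots,d-1\}$, then match the categorical $S$-matrix (the Weyl alternating sum from \cref{thm:modular_data_gln}) with the minor $(\bigwedge^m S)_{a,b}$ via the change of variables $\iota_p(a)+\rho = w_0(\sum a_i\varepsilon_i)-p\varpi_m$, and finally read off the twist to recover $\bigwedge^m T$ up to a global scalar. The paper carries this out with exactly the determinant identity you describe; the ``Schur polynomial / Cauchy--Binet'' language you use is equivalent to the paper's direct manipulation of $\sum_{w\in W}(-1)^{l(w)}\xi^{2\langle w(\lambda+\rho),\mu+\rho\rangle}$.

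One point to sharpen: you write that one must check $\tilde{\mathcal{D}}_{m,\xi}$ is ``genuinely modular'' after modularization. This is not always the case. When $m\equiv d\pmod 2$, the generator $\varepsilon$ of the symmetric center has quantum dimension $-1$ (see \cref{lem:gen_center}), so ordinary modularization is unavailable and the paper instead passes to a non-degenerate \emph{super}fusion category via the slightly-degenerate framework of \cref{sec:slightly-deg}. The $S$- and $T$-matrix computations you outline go through unchanged in that setting, but your checklist should replace ``modular'' by ``non-degenerate (super)fusion'' and invoke the appropriate Verlinde-type statement for slightly degenerate categories rather than the classical one. The sign bookkeeping you anticipate (your ``root-of-unity and sign factors'') is exactly where this shows up: the signs $\sigma_a$ in the paper's statement absorb both the $\eta_a$ coming from the choice of representative in $\tilde{C}_{m,d}$ and the factor $(-1)^{\sum a_i}$, and cannot in general be removed (cf.\ \cref{prop:boxproduct}).
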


Each family $\mathcal{F}$ of unipotent characters of $G(d,1,n)$ is defined by the choice of non-negative integers $w_1,\ldots,w_r$  and integers $0<n_1,\ldots,n_r \leq d$. Up to some constant, the Fourier matrix of the corresponding family of unipotent characters is a submatrix of the complex conjugate of $\bigwedge^{n_1} S \otimes \cdots \otimes \bigwedge^{n_r} S$, the product being the Kronecker tensor product of matrices; there is a similar construction for the eigenvalues of the Frobenius using the various matrices $\bigwedge^{n_i} T$. It is then natural to consider the Deligne tensor product $\tilde{\mathcal{D}}_{\underline{n},\xi^{-1}} = \tilde{\mathcal{D}}_{n_{1},\xi^{-1}}\boxtimes \cdots \boxtimes \tilde{\mathcal{D}}_{n_{r},\xi^{-1}}$.

\begin{theoremm}[\cref{thm:categorification_d1n}]
  There exists a non-degenerate subcategory $\tilde{\mathcal{E}}_{\underline{n},\xi}$ of  $\tilde{\mathcal{D}}_{\underline{n},\xi^{-1}}$ which is a categorification of the modular datum associated to the family of unipotent characters $\mathcal{F}$.
\end{theoremm}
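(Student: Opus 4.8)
\medskip

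\noindent\textbf{Proof idea.}
The plan is to realize $\tilde{\cE}_{\underline n,\xi}$ as a non-degenerate fusion subcategory of the modular category $\tilde{\cD}_{\underline n,\xi^{-1}}$, cut out by the combinatorics of the family $\cF$, and then to identify its modular datum with the one attached to $\cF$ by Malle and Cuntz. First I would record that a Deligne tensor product of modular categories is again modular, with $S$- and $T$-matrices the Kronecker tensor products of those of the factors; combined with \cref{thm:categorification} applied to each $\tilde{\cD}_{n_i,\xi^{-1}}$ — where the parameter $\xi^{-1}$ replaces $\bigwedge^{n_i}S$ and $\bigwedge^{n_i}T$ by their complex conjugates — this shows that $\tilde{\cD}_{\underline n,\xi^{-1}}$ is a categorification of the modular datum $\bigl(\overline{\bigwedge^{n_1}S\otimes\cdots\otimes\bigwedge^{n_r}S},\ \overline{\bigwedge^{n_1}T\otimes\cdots\otimes\bigwedge^{n_r}T}\bigr)$. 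At the same time I would make the indexing of $\Irr(\tilde{\cD}_{n_i,\xi^{-1}})$ explicit — by $n_i$-element subsets of $\bZ/d$, equivalently by partitions inside an $n_i\times(d-n_i)$ box — so that $\Irr(\tilde{\cD}_{\underline n,\xi^{-1}})$ becomes the set of $r$-tuples of such, and the determinants of the $\gl_{n_i}$-factors endow $\tilde{\cD}_{\underline n,\xi^{-1}}$ with a grading by a finite abelian group $A$.

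Next I would isolate the subset $I_{\cF}\subseteq\Irr(\tilde{\cD}_{\underline n,\xi^{-1}})$ indexing the unipotent characters of $\cF$: following \cite{cuntz-fusion} and Malle's parametrization, this is the set of tuples whose total charge lies in a fixed coset, which — since $\cF$ contains a distinguished ``special'' character playing the role of a unit — is precisely the trivial component of $\tilde{\cD}_{\underline n,\xi^{-1}}$ for an appropriate grading coming from $A$. In particular $I_{\cF}$ is closed under the fusion rules and under duality, hence spans a (full) fusion subcategory $\tilde{\cE}_{\underline n,\xi}$, and the $S$- and $T$-matrices of $\tilde{\cE}_{\underline n,\xi}$ are literally the submatrices of those of $\tilde{\cD}_{\underline n,\xi^{-1}}$ indexed by $I_{\cF}\times I_{\cF}$ and by $I_{\cF}$.

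It then remains to check that $\tilde{\cE}_{\underline n,\xi}$ is non-degenerate and that its modular datum is the one of $\cF$. For non-degeneracy I would use that the $s_{ij}$ of a ribbon fusion subcategory are the corresponding entries of the ambient $S$-matrix, so that the $S$-matrix of $\tilde{\cE}_{\underline n,\xi}$ is, up to a nonzero scalar, Malle's Fourier matrix of $\cF$, which is known to be unitary and in particular invertible; alternatively, one realizes $\tilde{\cE}_{\underline n,\xi}$ as the Müger centralizer of a suitable pointed subcategory of $\tilde{\cD}_{\underline n,\xi^{-1}}$ generated by the determinantal invertibles and invokes the criterion that a fusion subcategory $\cK$ of a non-degenerate braided category is non-degenerate if and only if $\cK\cap C(\cK)=\mathbf{Vec}$. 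The two modular data then agree up to the global scalar already present in ``up to some constant'': as in \cref{thm:categorification}, this scalar is pinned down by choosing, on each factor $\tilde{\cD}_{n_i,\xi^{-1}}$, the pivotal structure whose tensor unit corresponds to the special character of $\cF$, and the residual sign/root-of-unity freedom on the $T$-matrix is the usual central-charge ambiguity allowed by the notion of categorification of a modular datum.

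The main obstacle is the combinatorial bridge: matching Malle's $d$-symbol description of the unipotent characters of $\cF$ with the weight combinatorics of $\bigwedge^{n_i}S$ inside the Deligne product, and proving that the resulting subset $I_{\cF}$ is exactly a graded component — hence closed under fusion — whose centralizer is the pointed subcategory of determinantal lines. Once this identification and the closure under fusion are in hand, non-degeneracy and the comparison of the $S$- and $T$-matrices are essentially formal, modulo the scalar ambiguities already settled in \cref{thm:categorification}.
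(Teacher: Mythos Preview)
Your outline follows the phrasing of the introduction literally: take the Deligne product $\tilde{\cD}_{\underline n,\xi^{-1}}=\tilde{\cD}_{n_1,\xi^{-1}}\boxtimes\cdots\boxtimes\tilde{\cD}_{n_r,\xi^{-1}}$ of the \emph{already} (super)modularized factors, carve out a graded degree-zero piece, and verify non-degeneracy. The paper's actual construction reverses the order of operations: it forms the Deligne product $\cD_{\underline n,\xi^{-1}}$ of the \emph{pre}-modularized categories $\cD_{n_i,\xi^{-1}}$ (each has symmetric center generated by a single object $\varepsilon_{n_i}$), passes to the graded subcategory $\cE_{\underline n,\xi^{-1}}$ of objects with $\sum_i\langle\lambda_i,\varpi_{n_i}\rangle\equiv 0\ [d]$, determines its symmetric center explicitly (it is the pointed category on the $2^r$ objects $\varepsilon_{\underline\delta}$), and only then (super)modularizes all at once to obtain $\tilde{\cE}_{\underline n,\xi^{-1}}$. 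The comparison with Malle's $\cS$ and $\cT$ is then a direct computation parallel to \cref{thm:categorification}, with an extra $\sqrt d$ in the normalization coming from $\dim(\cE_{\underline n,\xi^{-1}})=\dim(\cD_{\underline n,\xi^{-1}})/d$.

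The reason the paper proceeds in this order is the super case, and this is where your proposal has a genuine gap. Your first step asserts that ``a Deligne tensor product of modular categories is again modular, with $S$- and $T$-matrices the Kronecker products of those of the factors''. But whenever $n_i\equiv d\ [2]$ the factor $\tilde{\cD}_{n_i,\xi^{-1}}$ is not a modular category in the ordinary sense: it is a non-degenerate \emph{super}fusion category (equivalently, $\cD_{n_i,\xi^{-1}}$ is only slightly degenerate). Kronecker-multiplying the $S$-matrices no longer produces an invertible matrix of the right size, the M\"uger-centralizer criterion you invoke is stated for genuine fusion categories, and the ``determinantal pointed subcategory'' you want to centralize need not be non-degenerate inside the product. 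The paper sidesteps all of this by staying in the world of ordinary (slightly degenerate) fusion categories until the very last step, computing the symmetric center of $\cE_{\underline n,\xi^{-1}}$ by hand (essentially the same weight-and-twist argument as in \cref{prop:symmetric_center}), and then performing a single (super)modularization. Your combinatorial bridge---matching $\Xi$ with $\tilde E_{\underline n,d}$ via the maps $\tilde\iota_{n_i,p_i}$ and the congruence $\sum_i(p_in_i+\binom{n_i}{2})\equiv m\binom{d}{2}\ [d]$---is exactly what the paper does, so that part of your plan is on target; it is the categorical scaffolding around it that needs to be rerouted.
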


\medskip

The first section of this paper is a recollection of known results concerning fusion algebras, modular data and their categorification. Then, in a second section, we study thoroughly the representations of the quantum group $\mathcal{U}_q(\mathfrak{gl}_m)$ at an even root of unity. The semisimplification of the category of tilting modules gives rises to a semisimple braided pivotal category, with a non-trivial symmetric center. We then explain the effect of killing the symmetric center and describe the modular datum that one can extract from the category $\tilde{\mathcal{D}}_{m,\xi}$. The third part is dedicated to the comparison of this modular datum with the $m$-th exterior power of the renormalized character table of the cyclic group of order $d$. Using the categorical interpretation of the matrix $\bigwedge^{m} S$, we prove several conjectures of Cuntz related to positivity questions. Finally, in the fourth section, we consider the modular data defined by Malle, which are associated to families of unipotent characters of the complex reflection group $G(d,1,n)$. These modular data are shown to be obtained from a variant of the categorical construction of the second section.

%%% Local Variables:
%%% mode: latex
%%% TeX-master: "../cat_exterior_powers_gd1n"
%%% End:

%%%%%%%%%%%%%%%%%%%%%%
%      Sections      %
%%%%%%%%%%%%%%%%%%%%%%

%%%%%%%%%%%%%%%%%%%%%%%%%%%%%%%%%%%%%%%%%%%%%%%%%%%%%%%%
%                                                      %
%              Categorical prolegomena                 %
%                                                      %
%%%%%%%%%%%%%%%%%%%%%%%%%%%%%%%%%%%%%%%%%%%%%%%%%%%%%%%%

\section{Categorical prolegomena}
\label{sec:cat_prol}

Our base field is the field of complex numbers $\mathbb{C}$, but most of the materials of this section remains true over an algebraically closed field of characteristic $0$.

\subsection{Fusion algebras from $S$-matrices}
\label{sec:fusion_alg}

We start by recollection a some basic facts on the notion of a fusion algebra.

Let $I$ be a finite set and $\mathbb{S}$ a square matrix with complex entries indexed by $I$. We suppose that $\mathbb{S}$ is symmetric and unitary, and that there exists $i_0\in I$ such that $\mathbb{S}_{i_0,i}\neq 0$ for every $i\in I$. We also suppose that for every $i,j,k\in I$, the number
    \begin{equation}
      N_{i,j}^k = \sum_{l\in I}\frac{\mathbb{S}_{i,l}\mathbb{S}_{j,l}\overline{\mathbb{S}_{k,l}}}{\mathbb{S}_{i_0,l}} \in \mathbb{Z}.\label{eq:verlinde}
    \end{equation}
is an integer.

To such a matrix $\mathbb{S}$, we associate a $\mathbb{Z}$-algebra $A_{\mathbb{S}}$, which is free as a $\mathbb{Z}$-module with basis $(b_i)_{i\in I}$. The product is defined on the basis by
\[
  b_i\cdot b_j = \sum_{k\in I}N_{i,j}^k b_k
\]
and is linearly extended to $A_{\mathbb{S}} = \bigoplus_{i\in I}\mathbb{Z} b_i$. It is easily checked that this multiplication is associative and that $b_{i_0}$ is the unit element. The algebra $A_{\mathbb{S}}$ is the \emph{fusion algebra} associated with the matrix $\mathbb{S}$ and the integers $(N_{i,j}^k)_{i,j,k\in I}$ are the structure constants with respect to the basis $(b_i)_{i\in I}$. Note that multiplying $\mathbb{S}$ by any complex number $\omega$ of module $1$ leads to an isomorphic fusion algebra $A_{\omega \mathbb{S}}\simeq A_{\mathcal{S}}$.

\begin{lemma}
  \label{lem:change_signs}
  Let $\Sigma$ be a diagonal matrix with entries $(\sigma_i)_{i\in I}$ with $\sigma_i\in\{\pm 1\}$. Let $\mathbb{S}'$ be the matrix $\Sigma \mathbb{S} \Sigma^{-1}$. Denote by $(b_i)_{i \in I}$ the basis of $A_{\mathbb{S}}$ and by $(b_i')_{i\in I}$ the basis of $A_{\mathbb{S}'}$. Then $b_i \mapsto \sigma_{i_0}\sigma_i b'_i$ is an algebra isomorphism between $A_{\mathbb{S}}$ and $A_{\mathbb{S}'}$.
\end{lemma}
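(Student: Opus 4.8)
The plan is a direct verification: first check that $\bS'$ still satisfies the standing hypotheses so that $A_{\bS'}$ is defined, then express the structure constants of $A_{\bS'}$ in terms of those of $A_{\bS}$, and finally check that the prescribed map respects the multiplication and the unit.

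For the first step I would note that $\Sigma^{-1}=\Sigma$ and the $\sigma_i$ are real, so $\bS'_{i,j}=\sigma_i\sigma_j\bS_{i,j}$. Hence $\bS'$ is symmetric, it is unitary as a product of unitary matrices, and $\bS'_{i_0,i}=\sigma_{i_0}\sigma_i\bS_{i_0,i}\neq 0$ for all $i$; so the construction applies to $\bS'$ with the same distinguished index $i_0$. For the second step, I would substitute $\bS'_{i,l}=\sigma_i\sigma_l\bS_{i,l}$ into \eqref{eq:verlinde}; using $\sigma_l^2=1$, $\overline{\sigma_k}=\sigma_k$ and $\sigma_{i_0}^{-1}=\sigma_{i_0}$, the factors $\sigma_l$ cancel termwise and one obtains ${N'}^{k}_{i,j}=\sigma_{i_0}\sigma_i\sigma_j\sigma_k\,N^{k}_{i,j}$. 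In particular these numbers are integers, which also re-confirms that $A_{\bS'}$ is well defined.

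For the last step, let $\varphi\colon A_{\bS}\to A_{\bS'}$ be the $\mathbb{Z}$-linear map with $\varphi(b_i)=\sigma_{i_0}\sigma_i b'_i$; it is an isomorphism of $\mathbb{Z}$-modules since it sends the basis to the basis up to signs. Then
\[
\varphi(b_i)\varphi(b_j)=\sigma_{i_0}^2\sigma_i\sigma_j\sum_{k}{N'}^{k}_{i,j}b'_k
=\sum_{k}\sigma_i^2\sigma_j^2\,\sigma_{i_0}\sigma_k\,N^{k}_{i,j}b'_k
=\sum_{k}N^{k}_{i,j}\,\sigma_{i_0}\sigma_k b'_k=\varphi(b_ib_j),
\]
using $\sigma_{i_0}^2=\sigma_i^2=\sigma_j^2=1$, and $\varphi(b_{i_0})=\sigma_{i_0}^2b'_{i_0}=b'_{i_0}$ is the unit of $A_{\bS'}$; hence $\varphi$ is an algebra isomorphism. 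There is no genuine obstacle here beyond bookkeeping the signs: the one point worth emphasising is that the distinguished index contributes an unavoidable overall factor $\sigma_{i_0}$ in the structure constants, which is precisely what dictates the normalisation $b_i\mapsto\sigma_{i_0}\sigma_i b'_i$ rather than the naive $b_i\mapsto\sigma_i b'_i$ (the latter fails to send $b_{i_0}$ to the unit unless $\sigma_{i_0}=1$).
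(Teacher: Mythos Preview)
Your proof is correct and follows exactly the same approach as the paper: both compute that ${N'}^{k}_{i,j}=\sigma_{i_0}\sigma_i\sigma_j\sigma_k\,N^{k}_{i,j}$ and deduce the isomorphism from this relation. You simply spell out more of the bookkeeping (the verification that $\bS'$ satisfies the standing hypotheses and the explicit check that $\varphi$ is multiplicative and unital), which the paper leaves to the reader.
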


\begin{proof}
  If we denote by $(N_{i,j}^k)_{i,j,k\in I}$ (resp. $(N_{i,j}^{'k})_{i,j,k\in I}$) the structure constants of $A_{\mathbb{S}}$ (resp. $A'_{\mathbb{S}}$), we have that
  \[
    N_{i,j}^{'k} = \sigma_i\sigma_j\sigma_k\sigma_{i_0} N_{i,j}^k,
  \]
  for any $i,j,k\in I$. The lemma follows easily from this equality.
\end{proof}

Therefore, conjugation by a diagonal matrix of signs translates into a change of signs of the basis of the fusion algebra. Given a matrix $\mathbb{S}$ we are interested in the following question:

\begin{question}
  \label{qu:change_signs}
Does it exist a collection of signs such that the algebra $A_{\mathbb{S}'}$, obtained from $\mathbb{S}'$ as in \cref{lem:change_signs}, has non-negative structure constants?
\end{question}

It is usually not easy to give an answer to this question since the structure constants might be tedious to compute. Nevertheless, we will later give some examples of such matrices $\mathbb{S}$ and answer to this question via categorical methods.

\subsection{Modular data and fusion algebras}
\label{sec:modular_data}

We now define the notion of a modular datum, which is inspired from \cite{gannon,lusztig-exotic}.

\begin{definition}
  A \emph{modular datum} is a quadruple $(I,i_0,\mathbb{S},\mathbb{T})$, where $I$ is a finite set, $i_0$ is an element of $I$ called \emph{special} or \emph{distinguished}, $\mathbb{S}$ is a complex matrix with entries indexed by $I$, and $\mathbb{T}$ is a complex diagonal matrix with entries indexed by $I$ satisfying the following conditions:
  \begin{itemize}
  \item $\mathbb{S}$ is symmetric and unitary,
  \item $\mathbb{S}$ and $\mathbb{T}$ define a projective representation of $SL_2(\mathbb{Z})$: there exists $\xi \in \mathbb{C}^*$ such that
    \[
      \mathbb{S}^4=\id,\quad (\mathbb{ST})^3=\xi\id,\quad\text{and}\quad \mathbb{S}^2\mathbb{T}=\mathbb{T}\mathbb{S}^2,
    \]
  \item for all $i\in I$, $\mathbb{S}_{i_0,i}\neq 0$,
  \item for all $i,j,k\in I$, we have
    \[
      N_{i,j}^k = \sum_{l\in I}\frac{\mathbb{S}_{i,l}\mathbb{S}_{j,l}\overline{\mathbb{S}_{k,l}}}{\mathbb{S}_{i_0,l}} \in \mathbb{Z}.
    \]
  \end{itemize}
\end{definition}

Note that by renormalizing $\mathbb{T}$ by a third root of $\xi$, one can obtain a genuine representation of $SL_{2}(\mathbb{Z})$.

Since the matrix $\mathbb{S}$ of a modular datum $(I,i_0,\mathbb{S},\mathbb{T})$ satisfies the conditions of \cref{sec:fusion_alg}, we have at our disposal the fusion algebra $A_{\mathbb{S}}$. Even if this algebra depends only on $\mathbb{S}$, we will call it the fusion algebra associated with the modular datum $(I,i_0,\mathbb{S},\mathbb{T})$.

\subsection{Non-degenerate and slightly degenerate categories}
\label{sec:non-deg_sl-deg}

Using modular categories, one can try to categorify a modular datum whose fusion ring has non-negative structure constants. In \cite{slightly-deg}, the author explains how slightly degenerate pivotal fusion categories provides a broader framework for the categorifications of modular data where the fusion ring may have negative structure constants. We quickly recall these categorical notions, and the main results of \cite{slightly-deg}. 

\subsubsection{Pivotal fusion categories}

For the definition of a fusion category, we refer to \cite[Definition 4.1.1]{egno}. The tensor product will be denoted by $\otimes$, the unit object by $\mathbf{1}$, and the associativity and unit constraints will be omitted. The set of isomorphism classes of simple objects of a fusion category $\mathcal{C}$ is denoted by $\Irr(\mathcal{C})$ and its Grothendieck ring by $\Gr(\mathcal{C})$. The latter is a free abelian group with generators given by $([X])_{\in\Irr(\mathcal{C})}$ and the multiplication is given by the tensor product:
\[
  [X][Y] = \sum_{Z\in \Irr(\mathcal{C})}N_{X,Y}^Z[Z],
\]
where $N_{X,Y}^Z$ denotes the multiplicity of the simple object $Z$ in the tensor product $X\otimes Y$.

The \emph{left dual} (resp. \emph{right dual}) $(X^*,\ev_X,\coev_X)$ (resp. $({}^*X,\ev'_X,\coev'_X)$) of an object $X$ consists of the datum of an object $X^*$, a evaluation map $\ev_X\colon X^*\otimes X \rightarrow \mathbf{1}$ (resp. $\ev'_X\colon X\otimes {}^*X \rightarrow \mathbf{1}$) and a coevaluation map $\coev_X\colon\mathbf{1}\rightarrow X\otimes X^*$ (resp. $\coev'_X\colon\mathbf{1}\rightarrow {}^*X\otimes X$) satisfying
\begin{align*}
  (\id_X\otimes \ev_X)\circ (\coev_X\otimes \id_X) &= \id_X &\text{and}&&   (\ev_X\otimes\id_{X^*})\circ(\id_{X^*}\otimes\coev_{X})&=\id_{X^*}\\
  \intertext{(resp.}
  (\ev'_X\otimes \id_X)\circ (\id_X\otimes \coev'_X) &= \id_X &\text{and}&&   (\id_{X^*}\otimes\ev'_{X})\circ(\coev'_{X}\otimes\id_{X^*})&=\id_{X^*}).
\end{align*}
Left (resp. right) duals are unique up to unique isomorphism and a monoidal category is said to be rigid if every object admits a left and a right dual. Recall that, by definition, a fusion category is rigid.

If $X$ and $Y$ have left duals, we also have the notion of a \emph{left dual map} for any $f\in\Hom_{\mathcal{C}}(X,Y)$. It is a map $f^*\in\Hom_{\mathcal{C}}(Y^*,X^*)$ and is defined by
\[
  f^* = (\ev_{Y}\otimes \id_{X^*}) \circ (\id_{Y^*}\otimes f \otimes \id_{X^*}) \circ (\id_{Y^*}\otimes \coev_X).
\]

A rigid monoidal category is said to be \emph{pivotal} if there exists a natural isomorphism $a_X\colon X\rightarrow X^{**}$ compatible with the tensor product, that is $a_{X\otimes Y} = a_X \otimes a_Y$, up to the usual identification between $(X\otimes Y)^{**}$ and $X^{**}\otimes Y^{**}$.

In a pivotal fusion category we have at our disposal the \emph{right quantum trace} of an endomorphism. Given $f\in\End_{\mathcal{C}}(X)$, its right quantum trace is the unique scalar $\Tr(f)$ such that the composition
\[
  \begin{tikzcd}
    \mathbf{1}\ar[r,"\coev_X"] & X\otimes X^* \ar[r,"(a_X\circ f)\otimes \id_X"] &[3em] X^{**}\otimes X^* \ar[r,"\ev_{X^*}"] & \mathbf{1}
  \end{tikzcd}
\]
is equal to $\Tr(f)\id_{\mathbf{1}}$. The \emph{right quantum dimension} $\dim(X)$ of an object $X$ is simply the right quantum trace of the identity morphism. There also exists a notion of left quantum trace and left quantum dimension. A pivotal structure is said to be \emph{spherical} if $\dim(X) = \dim(X^*)$ for every object, or equivalently if the left and right quantum traces coincide.  Most of the pivotal structures we will consider in \cref{sec:qgl} are not spherical.

\begin{remark}
  We choose the above convention for right quantum traces which is different to \cite[Definition 4.7.1]{egno}. Our convention follows from graphical calculus, where the right quantum trace of an endomorphism is obtained by closing a diagram on the right.
\end{remark}

Simple objects of a fusion category have a non-zero quantum dimension and we define the categorical dimension of such a fusion category $\mathcal{C}$ by
\[
  \dim(\mathcal{C}) = \sum_{X\in \Irr(\mathcal{C})}\lvert \dim(X) \rvert^2.
\]
It is a positive real number.

\subsubsection{Braided categories, degeneracy and twist}

A \emph{braiding} on a monoidal category is the datum of a binatural isomorphism $c_{X,Y}\colon X\otimes Y \rightarrow Y\otimes X$ such that the hexagon axioms are satisfied:
\[
  c_{X\otimes Y,Z} = (c_{X,Z}\otimes \id_Y)\circ(\id_X\otimes c_{Y,Z})
  \quad \text{and} \quad
  c_{X,Y\otimes Z} = (\id_Y\otimes c_{X,Z})\circ(c_{X,Y}\otimes \id_Z).
\]
As an immediate consequence, the Grothendieck ring of a braided fusion category is commutative.

\medskip

A simple object $X$ of a braided category is said to be \emph{transparent} if $c_{Y,X}\otimes c_{X,Y} = \id_{X\otimes Y}$ for every object $Y$. The \emph{symmetric center} $\mathcal{Z}_{\sym}(\mathcal{C})$ of a braided category $\mathcal{C}$ is the full subcategory of $\mathcal{C}$ whose objects are the transparent objects of $\mathcal{C}$.

We say that a braided fusion category is \emph{non-degenerate} if its symmetric center is tensor generated by the unit object $\mathbf{1}$. We say that a braided fusion category is \emph{slightly degenerate} if its symmetric center is equivalent to the symmetric category $\sVect$ of finite dimensional super vector spaces, with braiding $c_{V,W}(v\otimes w) = (-1)^{\lvert v\rvert\lvert w \rvert} w\otimes v$ for any super vector spaces $V,W$, $v\in V$ and $w\in W$.

\medskip

We now define the $S$-matrix of a braided fusion category which will play a prominent role.

\begin{definition}
The \emph{$S$-matrix} of a braided pivotal fusion category $\mathcal{C}$ is the matrix $S=(S_{X,Y})_{X,Y\in\Irr(\mathcal{C})}$ indexed by $\Irr(\mathcal{C})$ with entries given by the left quantum trace of the double braiding:
\[
  S_{X,Y} = \Tr(c_{Y,X}\circ c_{X,Y}).
\]
\end{definition}

If a simple object $X$ is transparent, then for all $Y\in \Irr(\mathcal{C})$ one have $S_{X,Y} = \dim(X)\dim(Y)$. The converse is also true.

\begin{proposition}[{\cite[Proposition 8.20.5]{egno}}]
  Let $\mathcal{C}$ be a braided fusion category. Then an object $X\in Irr(\mathcal{C})$ is transparent if and only if for all $Y\in \Irr(\mathcal{C})$ one has $S_{X,Y} = \dim(X)\dim(Y)$.
\end{proposition}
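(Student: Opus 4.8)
The forward implication is already recorded above the statement: if $X$ is transparent then $c_{Y,X}\circ c_{X,Y}=\id_{X\otimes Y}$ for all $Y$, so $S_{X,Y}=\Tr(\id_{X\otimes Y})=\dim(X)\dim(Y)$, the last equality because the quantum dimension is multiplicative under $\otimes$ (a consequence of $a_{X\otimes Y}=a_X\otimes a_Y$). So the plan is to prove the converse. Assume $S_{X,Y}=\dim(X)\dim(Y)$ for all $Y\in\Irr(\mathcal C)$, and set $d_Y:=c_{Y,X}\circ c_{X,Y}\in\End_{\mathcal C}(X\otimes Y)$; the goal is $d_Y=\id_{X\otimes Y}$ for every object $Y$, and by naturality of $d_Y$ in $Y$ together with semisimplicity of $\mathcal C$ it is equivalent to prove this for $Y\in\Irr(\mathcal C)$. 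Since $S_{X,-}$ is additive and $\dim$ multiplicative, the hypothesis $S_{X,Y}=\dim(X)\dim(Y)$ in fact holds for every object $Y$, a form I shall use.

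The first step promotes the scalar identity $\Tr(d_Y)=S_{X,Y}$ to an identity of morphisms, via a partial quantum trace. Let $\operatorname{ptr}_X\colon\End_{\mathcal C}(X\otimes Y)\to\End_{\mathcal C}(Y)$ be the partial quantum trace over the first tensorand, so that $\Tr=\Tr_Y\circ\operatorname{ptr}_X$. For $Y$ simple, $\End_{\mathcal C}(Y)=\mathbb C\,\id_Y$, hence $\operatorname{ptr}_X(d_Y)=\lambda\,\id_Y$ with $\lambda\dim(Y)=\Tr(d_Y)=S_{X,Y}=\dim(X)\dim(Y)$, so $\operatorname{ptr}_X(d_Y)=\dim(X)\,\id_Y$ (using $\dim(Y)\neq 0$); by additivity this holds for every object $Y$. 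Thus the hypothesis amounts to
\[
  \operatorname{ptr}_X\big(c_{Y,X}\circ c_{X,Y}\big)=\dim(X)\,\id_Y\qquad\text{for every object }Y,
\]
that is, ``encircling the $Y$-strand by an $X$-loop acts as the scalar $\dim(X)$''.

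The remaining and essential step is to upgrade this to $d_Y=\id_{X\otimes Y}$; this is exactly \cite[Proposition 8.20.5]{egno}, and the route I would follow (in substance that of M\"uger) is to specialise the displayed identity to objects $Y=X^{*}\otimes Z$ with $Z$ arbitrary. Expanding $d_{X^{*}\otimes Z}$ by the two hexagon axioms,
\[
  c_{X^{*}\otimes Z,\,X}\circ c_{X,\,X^{*}\otimes Z}=(c_{X^{*},X}\otimes\id_Z)\circ\big(\id_{X^{*}}\otimes (c_{Z,X}\circ c_{X,Z})\big)\circ(c_{X,X^{*}}\otimes\id_Z),
\]
so $d_Z=c_{Z,X}\circ c_{X,Z}$ occurs inside $d_{X^{*}\otimes Z}$. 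Taking $\operatorname{ptr}_X$ over the outer tensorand and contracting with the zig--zag identities for $\ev_X,\coev_X$ — pictorially: closing the outer $X$-strand into a loop and sliding it off along the $X^{*}$-strand — rewrites $\operatorname{ptr}_X(d_{X^{*}\otimes Z})$ as an expression built solely from $d_Z$ and the duality data of $X$; comparing with the known value $\dim(X)\,\id_{X^{*}\otimes Z}$ forces $d_Z=\id_{X\otimes Z}$, and as $Z$ ranges over all objects this is transparency of $X$. I expect this diagrammatic contraction to be the main obstacle: besides running it carefully, one must — since $\mathcal C$ is only pivotal, not spherical — keep consistent track of the handedness of the (partial) quantum traces and of the pivotal isomorphisms $a_X$, so that both sides of the comparison are evaluated with the conventions that define the $S$-matrix.
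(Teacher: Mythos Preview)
The paper does not give its own proof of this proposition: it is quoted as \cite[Proposition 8.20.5]{egno} and used without argument. There is therefore nothing in the paper to compare your attempt against; what you have written is an outline of the standard M\"uger/EGNO argument.

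Your reduction is correct through the partial-trace identity $\operatorname{ptr}_X(c_{Y,X}\circ c_{X,Y})=\dim(X)\,\id_Y$ for all $Y$, and the hexagon expansion
\[
  d_{X^*\otimes Z}=(c_{X^*,X}\otimes\id_Z)\circ(\id_{X^*}\otimes d_Z)\circ(c_{X,X^*}\otimes\id_Z)
\]
is right. The gap is exactly where you flag it, but it is a genuine one: after you close the outer $X$-strand, the two crossings $c_{X,X^*}$ and $c_{X^*,X}$ do \emph{not} cancel --- their composite is the double braiding of $X$ with $X^*$, so the resulting $X$-loop is still nontrivially linked with the $X^{*}$-strand. ``Sliding it off along the $X^{*}$-strand'' would amount to already knowing that $X$ is transparent with respect to $X^{*}$, which is part of the conclusion. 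As written, then, your step~4 is circular rather than merely sketchy.

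The way the EGNO/M\"uger proof avoids this is not by unlinking the loop from $X^{*}$, but by contracting the $X^{*}$-strand itself against the duality data of the loop: one caps off the open $X^{*}$-strand with an evaluation (and opens the loop with a coevaluation), so that the zig--zag identities convert the whole picture into $d_Z$ up to the scalar $\dim(X)$, rather than into an expression that still contains a residual $X$--$X^{*}$ linking. You have correctly isolated the obstacle; to finish, carry out that contraction explicitly (with the handedness of the partial trace fixed by the paper's $S$-matrix convention) instead of appealing to an unlinking move that the hypotheses do not yet grant.
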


\medskip

In a rigid braided category, there always exists an isomorphism $u_X\colon X\rightarrow X^{**}$, called the \emph{Drinfeld morphism} which is given by the following composition
\[
  \begin{tikzcd}
    X \ar[r,"\id_X\otimes \coev_{X^*}"] &[3em]  X\otimes X^* \otimes X^{**} \ar[r,"c_{X,X^*}\otimes \id_{X^{**}}"] &[3em] X^*\otimes X \otimes X^{**} \ar[r,"\ev_X\otimes \id_{X^{**}}"] &[3em] X^{**}. 
  \end{tikzcd}
\]
However, this morphism is not a pivotal structure, but satisfies
\[
  u_X\otimes u_Y = u_{X\otimes Y}\circ c_{Y,X} \circ c_{X,Y}.
\]

We now suppose that $\mathcal{C}$ is a braided pivotal fusion category. The composition of the Drinfeld morphism and of the pivotal structure give rise to an endofunctor $\theta=a\circ u^{-1}$ of the identity. This endofunctor is a \emph{twist}, that is satisfies
\[
  \theta_{X\otimes Y} = \theta_X\otimes \theta_Y \circ c_{Y,X} \circ c_{X,Y}
\]
and the pivotal structure $a$ is spherical if and only if $\theta_{X^*} = (\theta_X)^*$ for every object $X$, that is $a$ is spherical structure if and only if $\theta$ is a \emph{ribbon}.

\subsubsection{Non-degenerate and slightly degenerate categories}
\label{sec:slightly-deg}

Under some assumptions on the symmetric center of a braided pivotal fusion category, the $S$-matrix and the twist give rise to a modular datum, with fusion algebra related to the Grothendieck ring of the category.

\medskip

\boitegrise{\textbf{Hypothesis:} We assume that the category $\mathcal{C}$ is non-degenerate.}{0.8\textwidth}

There exists an invertible object $\bar{\mathbf{1}}$ such that $S_{\bar{\mathbf{1}},Y} = \dim(\bar{\mathbf{1}})\dim(Y^*)$ for every simple object $Y$, see \cite[\S 2.4]{slightly-deg}. Denote by $\mathbb{T}$ the diagonal matrix with entries $(\delta_{X,Y}\theta_X)_{X,Y\in \Irr(\mathcal{C})}$, where we have identified $\theta_X$ and the unique scalar $\lambda$ such that $\theta_X=\lambda\id_X$.

We define the renormalized matrix $\mathbb{S}$ as
\[
  \mathbb{S}=\frac{S}{\sqrt{\dim(\mathcal{C})}\sqrt{\dim(\bar{\mathbf{1}})}},
\]
where $\sqrt{\dim(\bar{\mathbf{1}})}$ is a square root of $\dim(\bar{\mathbf{1}})$.

\begin{proposition}[{\cite[Theorem 2.22]{slightly-deg}}]
  Let $\mathcal{C}$ be a non-degenerate braided pivotal fusion category. Then $(\Irr(\mathcal{C}),\mathbf{1},\mathbb{S},\mathbb{T})$ is a modular datum. The associated fusion algebra $A_{\mathbb{S}}$ is isomorphic to the Grothendieck ring of $\mathcal{C}$.
\end{proposition}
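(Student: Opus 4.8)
The plan is to verify, one by one, the four defining conditions of a modular datum for $(\Irr(\mathcal{C}),\mathbf{1},\mathbb{S},\mathbb{T})$ and then to read off the isomorphism $A_{\mathbb{S}}\simeq\Gr(\mathcal{C})$ from the Verlinde formula. I would work throughout with the \emph{unnormalized} matrix $S=(S_{X,Y})$, rescaling by the scalar $\sqrt{\dim(\mathcal{C})}\sqrt{\dim(\bar{\mathbf{1}})}$ only at the end, since most identities are insensitive to this scalar. The easy inputs come first. Symmetry of $S$ (hence of $\mathbb{S}$) is cyclicity of the quantum trace: with $f=c_{X,Y}$ and $g=c_{Y,X}$ one has $S_{X,Y}=\Tr(g\circ f)=\Tr(f\circ g)=S_{Y,X}$. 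The condition $\mathbb{S}_{\mathbf{1},Y}\neq 0$ holds because the double braiding with the unit object is trivial, so $S_{\mathbf{1},Y}=\Tr(\id_Y)=\dim(Y)\neq 0$ for every simple $Y$. The one genuinely structural fact to invoke here is that \emph{non-degeneracy of $\mathcal{C}$ forces the unnormalized $S$-matrix to be invertible} --- the pivotal, not necessarily spherical, version of the Bruguières--Müger criterion, which in this framework is \cite[\S 8.20]{egno} together with \cite[\S 2.4]{slightly-deg}, the latter also producing the invertible object $\bar{\mathbf{1}}$ with $S_{\bar{\mathbf{1}},Y}=\dim(\bar{\mathbf{1}})\dim(Y^{*})$.

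Next I would establish the $SL_2(\mathbb{Z})$-type relations at the unnormalized level. Introduce the Gauss sums $p^{\pm}=\sum_{X\in\Irr(\mathcal{C})}\theta_X^{\pm1}\dim(X)\dim(X^{*})$ (with the appropriate choice of left/right quantum dimensions, since $\mathcal{C}$ need not be spherical), and prove by the usual diagrammatic manipulations the anomaly identities $(ST)^{3}=p^{+}S^{2}$ and $p^{+}p^{-}=\dim(\mathcal{C})$ (up to a controlled power of $\dim(\bar{\mathbf{1}})$), together with --- the delicate point --- the computation $S^{2}=\dim(\bar{\mathbf{1}})\,C$, where $C$ is the permutation matrix of $X\mapsto\bar{\mathbf{1}}\otimes X^{*}$ (or its inverse). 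In a spherical modular category $S^{2}$ is just the charge-conjugation permutation; here the presence of $\bar{\mathbf{1}}$ and the discrepancy between left and right quantum dimensions forces the extra factor, and \textbf{keeping track of all these dimension factors is the step I expect to be the main obstacle}. Granting these identities, dividing $S$ by $\sqrt{\dim(\mathcal{C})}\sqrt{\dim(\bar{\mathbf{1}})}$ turns $S^{2}=\dim(\bar{\mathbf{1}})C$ together with invertibility of $S$ into $\mathbb{S}^{4}=\id$ and into unitarity of $\mathbb{S}$ (expressing $\overline{S_{X,Y}}$ through $C$); the relation $(ST)^{3}=p^{+}S^{2}$ becomes $(\mathbb{S}\mathbb{T})^{3}=\xi\id$ for a suitable $\xi\in\mathbb{C}^{*}$ built from $p^{+}$, $\dim(\mathcal{C})$ and $\dim(\bar{\mathbf{1}})$; and $\mathbb{S}^{2}\mathbb{T}=\mathbb{T}\mathbb{S}^{2}$ follows from the compatibility of the twist with duals and with $\bar{\mathbf{1}}$ entailed by the same computation of $S^{2}$.

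Finally I would prove the Verlinde formula, which simultaneously settles the last axiom and the comparison with $\Gr(\mathcal{C})$. The core is the identity, for simple objects $X,Y$ and every simple $L$,
\[
  \frac{S_{X,L}\,S_{Y,L}}{S_{\mathbf{1},L}}\;=\;\sum_{Z\in\Irr(\mathcal{C})}N_{X,Y}^{Z}\,S_{Z,L},
\]
where $N_{X,Y}^{Z}=[X\otimes Y:Z]$ is the tensor-product multiplicity. One proves it by Schur's lemma: for simple $L$, encircling $\id_L$ by a loop labelled $X$ is an endomorphism of the simple object $L$, hence the scalar $S_{X,L}/\dim(L)$ (identify the scalar by taking the quantum trace, which closes the diagram into the Hopf link computing $S_{X,L}$), and encircling successively by $X$ and $Y$ and then fusing $X\otimes Y\simeq\bigoplus_Z N_{X,Y}^{Z}Z$ gives the two sides of the displayed equation, using naturality of the braiding with respect to direct sums. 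This identity says the columns of $S$ simultaneously diagonalize the fusion matrices; since $S$ is invertible one solves for $N_{X,Y}^{Z}$, and inserting the normalization together with unitarity $\overline{\mathbb{S}_{Z,L}}=(\mathbb{S}^{-1})_{L,Z}$ yields exactly $N_{X,Y}^{Z}=\sum_{L}\mathbb{S}_{X,L}\mathbb{S}_{Y,L}\overline{\mathbb{S}_{Z,L}}/\mathbb{S}_{\mathbf{1},L}$. In particular these Verlinde numbers are the non-negative integers $[X\otimes Y:Z]$, which is the fourth axiom; and since $A_{\mathbb{S}}$ and $\Gr(\mathcal{C})$ are the free $\mathbb{Z}$-modules on $(b_X)$ and $([X])$ with identical structure constants $N_{X,Y}^{Z}$ and units $b_{\mathbf{1}}$ and $[\mathbf{1}]$, the assignment $b_X\mapsto[X]$ is the desired isomorphism $A_{\mathbb{S}}\xrightarrow{\sim}\Gr(\mathcal{C})$.
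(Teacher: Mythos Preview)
The paper does not give its own proof of this proposition: it is quoted verbatim as \cite[Theorem 2.22]{slightly-deg} and simply recalled in \cref{sec:non-deg_sl-deg} as background, with no argument supplied. So there is nothing in the present paper to compare your proposal against.

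That said, your outline is the standard one and is essentially what the cited reference does: symmetry from cyclicity of the trace, nonvanishing of $\mathbb{S}_{\mathbf{1},Y}$ from $\dim(Y)\neq 0$, invertibility of $S$ from non-degeneracy, the $SL_2(\mathbb{Z})$ relations from the Gauss-sum identities, and the Verlinde formula via the encircling argument. You correctly flag that the only place where the pivotal (as opposed to spherical) hypothesis bites is in the computation of $S^2$, where the object $\bar{\mathbf{1}}$ appears and one must distinguish left and right quantum dimensions; this is exactly the content of \cite[\S 2.4]{slightly-deg} that the paper invokes. One small caution: your ``cyclicity of the quantum trace'' justification for symmetry is a bit glib in the non-spherical setting, since $c_{X,Y}$ and $c_{Y,X}$ are not endomorphisms of the same object and one is really comparing a right trace on $X\otimes Y$ with one on $Y\otimes X$; the clean argument goes through naturality of the braiding and the explicit Hopf-link diagram rather than abstract cyclicity. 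Apart from that, your sketch is sound.
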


\medskip

\boitegrise{\textbf{Hypothesis:} We assume that the category $\mathcal{C}$ is slightly degenerate. We also suppose that the twist of the simple transparent non-unit object of $\mathcal{C}$ is of quantum dimension $-1$ and of twist $1$.}{0.8\textwidth}

Denote by $\varepsilon$ the unique simple transparent of $\mathcal{C}$ such that $\varepsilon\not\simeq \mathbf{1}$. Then $\varepsilon\otimes \varepsilon \simeq \mathbf{1}$, $\dim(\varepsilon)=-1$ and $\theta_\varepsilon=1$.

Tensoring by $\varepsilon$ has no fixed points on $\Irr(\mathcal{C})$ and then we choose a subset $J\subseteq \Irr(\mathcal{C})$ containing one element for each orbit of simple object under tensorization by $\varepsilon$. We will make the assumption that $\mathbf{1}\in J$ (hence $\varepsilon \not\in J$). There again exists an invertible object $\bar{\mathbf{1}}\in J$ such that $S_{\bar{\mathbf{1}},Y} = \dim(\bar{\mathbf{1}})\dim(Y^*)$ for every simple object $Y$. We consider the submatrix $\tilde{S}$ of $S$ whose entries are indexed by $J$. Since $\Irr(\mathcal{C}) = J \sqcup J\otimes \varepsilon$, we have
\[
  S =
  \begin{pmatrix}
    \tilde{S} & -\tilde{S}\\
    -\tilde{S} & \tilde{S}
  \end{pmatrix}.
\]
Denote by $\tilde{\mathbb{T}}$ the diagonal matrix with entries $(\delta_{X,Y}\theta_X)_{X,Y\in J}$, where we have once again identified $\theta_X$ and the unique scalar $\lambda$ such that $\theta_X=\lambda\id_X$.

We define the renormalized matrix $\tilde{\mathbb{S}}$ as
\[
  \tilde{\mathbb{S}}=\frac{\tilde{S}}{\sqrt{\frac{1}{2}\dim(\mathcal{C})}\sqrt{\dim(\bar{\mathbf{1}})}},
\]
where $\sqrt{\dim(\bar{\mathbf{1}})}$ is a square root of $\dim(\bar{\mathbf{1}})$.

\begin{proposition}[{\cite[Theorem 3.7]{slightly-deg}}]
  Let $\mathcal{C}$ be a slightly degenerate braided pivotal fusion category. Then $(J,\mathbf{1},\tilde{\mathbb{S}},\tilde{\mathbb{T}})$ is a modular datum. The associated fusion algebra $A_{\tilde{\mathbb{S}}}$ is isomorphic to the quotient $\Gr(\mathcal{C})/([\varepsilon]+[\mathbf{1}])$ of the Grothendieck ring of $\mathcal{C}$.
\end{proposition}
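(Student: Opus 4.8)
The plan is to follow the proof of the non-degenerate case (the preceding \cite[Theorem~2.22]{slightly-deg}), the one new phenomenon being the combinatorics of the free order-two action $-\otimes\varepsilon$ on $\Irr(\cC)$ responsible for the block shape $S=\left(\begin{smallmatrix}\tilde{S}&-\tilde{S}\\-\tilde{S}&\tilde{S}\end{smallmatrix}\right)$; note that a reduction to the non-degenerate case by adjoining objects is not available, since $\sVect$ can never occur as a ribbon subcategory of a non-degenerate category. First I would dispose of the structural axioms: $\tilde{\mathbb{S}}$ is a scalar multiple of the restriction to $J\times J$ of the symmetric matrix $S$, hence symmetric, and $S_{\mathbf{1},Y}=\Tr(c_{Y,\mathbf{1}}\circ c_{\mathbf{1},Y})=\dim(Y)\neq 0$ because simple objects of a pivotal fusion category over $\bC$ have nonzero quantum dimension, so $\tilde{\mathbb{S}}_{\mathbf{1},Y}\neq 0$ and $i_0=\mathbf{1}$ is admissible. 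Both hypotheses on $\varepsilon$ get used here, via $\dim(Y\otimes\varepsilon)=-\dim(Y)$ (automatic once $\mathcal{Z}_{\sym}(\cC)=\sVect$) and $\theta_{Y\otimes\varepsilon}=\theta_Y$ (this uses $\theta_\varepsilon=1$), the latter being what forces the normalization by $\sqrt{\tfrac12\dim(\cC)}$ rather than $\sqrt{\dim(\cC)}$.

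The core is the unitarity of $\tilde{\mathbb{S}}$, which I would extract from the $S^2$-formula. In any braided fusion category one has $(S^2)_{X,Z}=\dim(\cC)\dim(\bar{\mathbf{1}})\sum_{W\in\Irr(\mathcal{Z}_{\sym}(\cC))}\dim(W)\,N_{X,Z}^W$, the $\bar{\mathbf{1}}$ being the non-sphericality correction, handled exactly as in the non-degenerate case; since $\mathcal{Z}_{\sym}(\cC)=\sVect$ and $\dim(\varepsilon)=-1$ this becomes $(S^2)_{X,Z}=\dim(\cC)\dim(\bar{\mathbf{1}})\bigl(\delta_{Z,X^*}-\delta_{Z,X^*\otimes\varepsilon}\bigr)$, the minus sign being exactly where $\dim(\varepsilon)=-1$ intervenes. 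Together with $\overline{S_{X,Y}}=S_{X,Y^*}$ this yields $\sum_{Y\in\Irr(\cC)}S_{X,Y}\overline{S_{Z,Y}}=(S^2)_{X^*,Z}=\dim(\cC)\dim(\bar{\mathbf{1}})\bigl(\delta_{X,Z}-\delta_{X,Z\otimes\varepsilon}\bigr)$. Restricting $X,Z$ to $J$ kills the term $\delta_{X,Z\otimes\varepsilon}$ (as $Z\otimes\varepsilon\notin J$), while the block form of $S$ and $\Irr(\cC)=J\sqcup J\otimes\varepsilon$ collapse the left-hand side to $2\sum_{Y\in J}\tilde{S}_{X,Y}\overline{\tilde{S}_{Z,Y}}$; hence $\sum_{Y\in J}\tilde{S}_{X,Y}\overline{\tilde{S}_{Z,Y}}=\tfrac12\dim(\cC)\dim(\bar{\mathbf{1}})\,\delta_{X,Z}$. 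In particular $\tilde{S}$ is invertible, and dividing by $\tfrac12\dim(\cC)\dim(\bar{\mathbf{1}})$ gives $\tilde{\mathbb{S}}\,\tilde{\mathbb{S}}^{*}=\id$. The step I expect to be the main obstacle is getting all the $\bar{\mathbf{1}}$-corrections mutually consistent in this non-spherical setting, so that the renormalizations of $S$, of the Gauss sums and of the anomaly fit together; the $\varepsilon$-combinatorics itself is routine once the pairing $\{\,Y,\,Y\otimes\varepsilon\,\}$ is in place.

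The $SL_2(\bZ)$-relations then come by restriction from $\cC$. Symmetry and the orthogonality just established give $\tilde{\mathbb{S}}^2=\tilde{C}$, the permutation of $J$ sending $X$ to the unique element of $J$ in the $\varepsilon$-orbit of $X^*$, so $\tilde{\mathbb{S}}^4=\id$; and $\tilde{\mathbb{S}}^2\tilde{\mathbb{T}}=\tilde{\mathbb{T}}\tilde{\mathbb{S}}^2$ because $\theta_{X^*}=\theta_X$ is constant on $\varepsilon$-orbits. For $(\tilde{\mathbb{S}}\tilde{\mathbb{T}})^3=\xi\,\id$ I would compare Gauss sums: from $\theta_{Y\otimes\varepsilon}\dim(Y\otimes\varepsilon)^2=\theta_Y\dim(Y)^2$ one gets $\sum_{Y\in J}\theta_Y^{\pm1}\dim(Y)^2=\tfrac12\sum_{Y\in\Irr(\cC)}\theta_Y^{\pm1}\dim(Y)^2$ and $\sum_{Y\in J}\lvert\dim(Y)\rvert^2=\tfrac12\dim(\cC)$, so the anomaly $\xi=p^{+}/\sqrt{\dim(\cC)}$ of $\cC$ agrees with the one of $(J,\mathbf{1},\tilde{\mathbb{S}},\tilde{\mathbb{T}})$ and the projective modular relation descends; it is precisely here that $\theta_\varepsilon=1$ is indispensable.

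Finally, for integrality and the fusion algebra I would work directly with $R=\Gr(\cC)/([\varepsilon]+[\mathbf{1}])$. Since $[\varepsilon]$ is a unit with $[\varepsilon]^2=[\mathbf{1}]$, the ideal $([\varepsilon]+[\mathbf{1}])$ is the $\bZ$-span of the $[Y]+[Y\otimes\varepsilon]$, so $R$ is a commutative $\bZ$-algebra, free of rank $|J|$ on the images $\overline{[Y]}$ of $Y\in J$ (with $\overline{[Y\otimes\varepsilon]}=-\overline{[Y]}$). Over $\bC$, the identity $S_{X,W}S_{Y,W}=\dim(W)\sum_Z N_{X,Y}^Z S_{Z,W}$ shows each $h_W\colon[X]\mapsto S_{X,W}/\dim(W)$ is a character of $\Gr(\cC)$; one checks $h_{W\otimes\varepsilon}=h_W$ and $h_W([\varepsilon]+[\mathbf{1}])=0$, so the $h_W$ descend to $R\otimes\bC$, and since $\tilde{S}$ is invertible the $|J|$ characters $h_W$, $W\in J$, are pairwise distinct, giving $R\otimes\bC\xrightarrow{\ \sim\ }\bC^{J}$, $\overline{[X]}\mapsto(\tilde{S}_{X,W}/\dim(W))_{W\in J}$. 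Inverting this character table by means of the orthogonality relations of the second step rewrites the structure constants of $R$ as $\widetilde{N}_{X,Y}^Z=\sum_{W\in J}\tilde{\mathbb{S}}_{X,W}\tilde{\mathbb{S}}_{Y,W}\overline{\tilde{\mathbb{S}}_{Z,W}}\big/\tilde{\mathbb{S}}_{\mathbf{1},W}$, i.e. exactly the Verlinde numbers of $\tilde{\mathbb{S}}$; as the left side is a structure constant of a $\bZ$-algebra it is an integer, which yields simultaneously the integrality axiom and the isomorphism $A_{\tilde{\mathbb{S}}}\cong\Gr(\cC)/([\varepsilon]+[\mathbf{1}])$.
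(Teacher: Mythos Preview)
The paper does not give its own proof of this proposition: it is quoted verbatim from \cite[Theorem~3.7]{slightly-deg} as part of the recollection of known results in Section~\ref{sec:cat_prol}, and no argument is supplied here. So there is nothing in this paper to compare your proposal against.

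That said, your sketch follows the natural line of the non-degenerate case and handles the $\varepsilon$-combinatorics correctly; the reduction of the orthogonality relation via the block form of $S$ and the Verlinde-type identification of $A_{\tilde{\mathbb{S}}}$ with $\Gr(\cC)/([\varepsilon]+[\mathbf{1}])$ are both sound. One point to tighten: your justification of $\tilde{\mathbb{S}}^2\tilde{\mathbb{T}}=\tilde{\mathbb{T}}\tilde{\mathbb{S}}^2$ invokes ``$\theta_{X^*}=\theta_X$'', but this equality of scalars is equivalent to sphericality of the pivotal structure (as recalled just above in the paper), and the setting here is explicitly \emph{not} assumed spherical. In the non-spherical case the charge-conjugation picture for $\tilde{\mathbb{S}}^2$ acquires a $\bar{\mathbf{1}}$-twist (this is exactly the ``$\bar{\mathbf{1}}$-corrections'' issue you flag), and the commutation with $\tilde{\mathbb{T}}$ must be argued through that correction rather than through $\theta_{X^*}=\theta_X$; likewise the identity $\overline{S_{X,Y}}=S_{X,Y^*}$ needs the $\bar{\mathbf{1}}$-adjusted form. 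If you supply those non-spherical refinements (as in \cite{slightly-deg}), the rest of your argument goes through.
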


The two quotients $\Gr(\mathcal{C})/([\varepsilon]+[\mathbf{1}])$ and $\Gr(\mathcal{C})/([\varepsilon]-[\mathbf{1}])$ have a basis indexed by $J$ and are both quotients of $\Gr(\mathcal{C})$:
\[
  \begin{tikzcd}
    & \Gr(\mathcal{C}) \ar[dl,two heads,"{[\varepsilon]=-[\mathbf{1}]}"'] \ar[dr,two heads,"{[\varepsilon]=[\mathbf{1}]}"] &\\
    \Gr(\mathcal{C})/([\varepsilon]+[\mathbf{1}]) & & \Gr(\mathcal{C})/([\varepsilon]-[\mathbf{1}])
  \end{tikzcd}
\]
One can easily describe their structure constants using the structure constants of $\Gr(\mathcal{C})$: for $X,Y,Z\in J$, the structure constant of $\Gr(\mathcal{C})/([\varepsilon]+[\mathbf{1}])$ are given by $N_{X,Y}^Z-N_{X,Y}^{Z\otimes \varepsilon}$ and the structure constant of $\Gr(\mathcal{C})/([\varepsilon]-[\mathbf{1}])$ are given by $N_{X,Y}^Z+N_{X,Y}^{Z\otimes \varepsilon}$. Hence, if $N_{X,Y}^ZN_{X,Y}^{\varepsilon\otimes Z}=0$ for every $X,Y,Z\in \Irr(\mathcal{C})$, the structure constants of $\Gr(\mathcal{C})/([\varepsilon]-[\mathbf{1}])$ are the absolute values of the structure constant of $\Gr(\mathcal{C})/([\varepsilon]+[\mathbf{1}])$. The condition $N_{X,Y}^ZN_{X,Y}^{\varepsilon\otimes Z}=0$ for every $X,Y,Z\in \Irr(\mathcal{C})$ follows often from a grading on the category $\mathcal{C}$ such that $\varepsilon$ sits in non-trivial degree.

\medskip

We can now easily give an answer to \cref{qu:change_signs} for the fusion algebra $A_{\tilde{\mathbb{S}}}\simeq \Gr(\mathcal{C})/([\varepsilon]+[\mathbf{1}])$. Indeed, the answer is positive if and only if the slightly degenerate category $\mathcal{C}$ is equivalent to $\mathcal{C}_0 \boxtimes \sVect$, where $\sVect$ is the category of super vector spaces and $\mathcal{C}_0$ is a non-degenerate braided category. From the categorical point of view, changing a sign of a basis element $[X]$ of $\Gr(\mathcal{C})/([\varepsilon]+[\mathbf{1}])$ amounts to pick $\varepsilon \otimes X$ instead of $X$ in the set $J$.

\medskip

To a slightly degenerate category $\mathcal{C}$ as above, one can attach a non-degenerate braided pivotal supercategory $\tilde{\mathcal{C}}$ by adding an odd isomorphism between $X$ and $X\otimes \varepsilon$, see \cite[Section 4]{slightly-deg} for more details. This procedure can be thought as a super version of modularization procedure for degenerate braided pivotal fusion categories, due to Bruguières \cite{bruguieres} and independently Müger \cite{muger-galois}. In this case, the ring $\Gr(\mathcal{C})/([\varepsilon]+[\mathbf{1}])$ is seen as the super Grothendieck ring of $\tilde{\mathcal{C}}$.

%%% Local Variables:
%%% mode: latex
%%% TeX-master: "../cat_exterior_powers_gd1n"
%%% End:

%%%%%%%%%%%%%%%%%%%%%%%%%%%%%%%%%%%%%%%%%%%%%%%
%                 		              %
%   Quantum $\gl_n$ and its representations   %
%                 			      %
%%%%%%%%%%%%%%%%%%%%%%%%%%%%%%%%%%%%%%%%%%%%%%%

\section{Quantum $\gl_n$ and its representations}
\label{sec:qgl}

In order to produce slightly degenerate categories, we consider categories of representations of the universal enveloping algebra of the reductive Lie algebra $\mathfrak{gl}_n$, with the deformation parameter being an even root of unity. The semisimplification of the category of tilting modules will provide a semisimple category, as in the case of a simple Lie algebra, but with an infinite number of simple objects. Nevertheless, the symmetric center has an infinite number of non-isomorphic simple objects, and killing the one-dimensional transparent objects will produce a non-degenerate of a slightly degenerate category. 

\subsection{Root system for $\gl_n$}
\label{sec:root_system}

We set up some notations for the root system of $\gl_n$. Let $P=\mathbb{Z}^n$ be the weight lattice of $\gl_n$ with standard basis $\varepsilon_1,\ldots,\varepsilon_n$. We equip it with the usual scalar product, which is given by $\langle\varepsilon_i,\varepsilon_j\rangle = \delta_{i,j}$. We also define the simple roots $\alpha_i = \varepsilon_i-\varepsilon_{i+1}$ for every $1\leq i < n$, which span over $\mathbb{Z}$ the root lattice $Q$. Fro $1 \leq j \leq n$, let $\varpi_j=\varepsilon_1+\cdots+\varepsilon_j$ be the fundamental roots which satisfy for every $1 \leq i < n$ and $1 \leq j \leq n$, $\langle\varpi_j,\alpha_i\rangle=\delta_{i,j}$.

The symmetric groups in $n$ letters $W$ acts on $P$ by permuting the coordinates and the scalar product $\langle -,- \rangle$ is $W$-equivariant. Let $l$ be the length function of $W$ for its usual Coxeter structure and we denote by $w_0$ the longest element of $W$ which is given by $w_0(k) = n+1-k$.  

Let $P^+$ be the set of dominant integral weights
\[
  P^+=\left\{\sum_{i=1}^n\lambda_i\varepsilon_i\in P\ \middle\vert\ \lambda_1\geq \lambda_2\geq\cdots\geq \lambda_n\right\}.
\]
Note that every fundamental weight is in $P^+$. Let $\rho=\varpi_1+\ldots+\varpi_{n-1}=\sum_{i=1}^n(n-i)\varepsilon_i$, which is again an element of $P^+$.

\subsection{Rational form of quantum $\gl_n$}
\label{sec:rational_form}

In this section, we fix $q$ an indeterminate over $\mathbb{Z}$, let $\mathcal{A}=\mathbb{Z}[q,q^{-1}]$ and $\Bbbk=\mathbb{Q}(q)$ its field of fractions. In $\mathcal{A}$, we define the following elements
\begin{align*}
  [n] &= \frac{q^n-q^{-n}}{q-q^{-1}}, & [n]! &= \prod_{i=1}^n[i], & \qbinom{n}{k} &= \frac{[n]!}{[k]![n-k]!},
\end{align*}
for any $n \in \mathbb{N}$ and $0 \leq k \leq n$.

\begin{definition}
  The \emph{quantum enveloping algebra} $\mathcal{U}_q(\gl_n)$ of $\gl_n$ is the $\mathbb{Q}(q)$ algebra generated by $E_i,F_i,L_j^{\pm 1}$ for $1\leq i < n$ and $1\leq j \leq n$ subject to the following relations
  \begin{align*}
    L_iL_i^{-1}&=L_i^{-1}L_i=1, & L_iL_j&=L_jL_i,\\
    L_iE_j &= q^{\langle\varepsilon_i,\alpha_j\rangle}E_jL_i, & L_iF_j &= q^{-\langle\varepsilon_i,\alpha_j\rangle}F_jL_i,
  \end{align*}
  \[
        [E_i,F_j] = \delta_{i,j}\frac{K_i-K_i^{-1}}{q-q^{-1}},
  \]
  where $K_i=L_iL_{i+1}^{-1}$, and subject to the quantum Serre relations
  \begin{align*}
    E_iE_j&=E_jE_i, & F_iF_j&=F_jF_i, & \text{if }\lvert i-j\rvert > 1, \\
    E_i^2E_j&-[2]E_iE_jE_i + E_jE_i^2 = 0, & F_i^2F_j&-[2]F_iF_jF_i + F_jF_i^2 = 0, & \text{if }\lvert i-j\rvert = 1.
  \end{align*}
\end{definition}

Note that $\mathcal{U}_q(\mathfrak{sl}_n)$ is isomorphic to the $\mathbb{Q}(q)$-subalgebra generated by $E_i,F_i$ and $K_i$ for $1\leq i < n$. For any $\lambda\in P$, we also define $L_\lambda = \prod_{i=1}^nL_i^{\lambda_i}$ so that $L_i=L_{\varepsilon_i}$ and $K_i=L_{\alpha_i}$. It is trivial, but nonetheless crucial, to check that $L_{\varpi_n}$ is a central element in $\mathcal{U}_q(\gl_n)$.

Let $\mathcal{U}_q(\gl_n)^{<0}$ (resp. $\mathcal{U}_q(\gl_n)^{\leq 0}$)  be the subalgebra generated by $(F_i)_{1 \leq i < n}$ (resp. $(F_i,L_j)_{1\leq i < n, 1 \leq j \leq n}$), $\mathcal{U}_q(\gl_n)^{>0}$ (resp. $\mathcal{U}_q(\gl_n)^{\geq 0}$) be the subalgebra generated by $(E_i)_{1 \leq i < n}$ (resp. $(E_i,L_j)_{1\leq i < n, 1 \leq j \leq n}$) and $\mathcal{U}_q(\gl_n)^0$ be the subalgebra generated by $(L_i)_{1 \leq i \leq n}$. It is a well-known fact that $\mathcal{U}_q(\gl_n)$ has the following triangular decomposition, as a $\mathbb{Q}(q)$-vector space
\[
  \mathcal{U}_q(\gl_n) \simeq \mathcal{U}_q(\gl_n)^{<0} \otimes \mathcal{U}_q(\gl_n)^0 \otimes \mathcal{U}_q(\gl_n)^{>0},
\]
the isomorphism being given by multiplication.

We endow the algebra $\mathcal{U}_q(\gl_n)$ with a comultiplication $\Delta$, a counit $\varepsilon$ and an antipode $S$ which turn $\mathcal{U}_q(\gl_n)$ into a Hopf algebra. These are given on the generators by
\begin{align*}
  \Delta(E_i) &= E_i\otimes K_i+1\otimes E_i,& \Delta(F_i) &= F_i\otimes 1 + K_i^{-1}\otimes F_i,& \Delta(L_i)&=L_i\otimes L_i,\\
  \varepsilon(E_i) &=0, & \varepsilon(F_i) &= 0, & \varepsilon(L_i) &= 1,\\
  S(E_i)&= -E_iK_i^{-1},& S(F_i) &= -K_iF_i,& S(L_i)&=L_i^{-1}.
\end{align*}

We also note that $S^2$ is given by conjugation by $L_{2\rho}$: for any $x\in\mathcal{U}_q(\gl_n)$,
\[
  S^{2}(x) = L_{2\rho}xL_{2\rho}^{-1}.
\]

The quantum group $\mathcal{U}_q(\gl_n)$ has the same quasi-$R$-matrix as $\mathcal{U}_q(\mathfrak{sl}_n)$. It is an element $\Theta=\sum_{\lambda\in Q\cap P^+}\Theta_\lambda$ in a completion of $\mathcal{U}_q(\gl_n)^{>0}\otimes \mathcal{U}_q(\gl_n)^{<0}$, see \cite[Chapter 7]{jantzen} for more details. We just give here some important properties of this quasi-$R$-matrix. Let $\Psi$ be the algebra automorphism of $\mathcal{U}_q(\gl_n)\otimes \mathcal{U}_q(\gl_n)$ given by
\begin{align*}
  \Psi(E_i\otimes 1) &= E_i\otimes K_i^{-1},& \Psi(F_i\otimes 1) &= F_i\otimes K_i,& \Psi(L_i\otimes 1) &= L_i\otimes 1,\\
  \Psi(1\otimes E_i) &= K_i^{-1}\otimes E_i,& \Psi(1\otimes F_i) &= K_i\otimes F_i,& \Psi(1\otimes L_i) &= 1\otimes L_i.\\
\end{align*}
Then one has
\begin{equation}
  \label{eq:theta_comult}
  \Theta \Delta(x) = (\Psi\circ \Delta^{\op})(x)\Theta,
\end{equation}
for any $x\in \mathcal{U}_q(\gl_n)$, where $\Delta^{\op}$ denotes the opposite comultiplication. Moreover $\Theta$ is invertible and satisfies
\begin{equation}
  \label{eq:theta_yb}
  (\Delta\otimes \id)(\Theta) = \Psi_{23}(\Theta_{13})\Theta_{23}\quad\text{and}\quad(\id\otimes \Delta)(\Theta) = \Theta_{12}(\Theta_{13})\Theta_{12}.
\end{equation}

Finally, one may give an explicit form of $\Theta$, see for example \cite[\S 10.1.D]{chari-pressley}.

\subsection{Lusztig's restricted integral form}
\label{sec:integral_form}

Following \cite[\S 9.3.A]{chari-pressley}, we define an integral version of $\mathcal{U}_q(\gl_n)$ over $\mathcal{A}$, which will be suitable for specializations at roots of unity.

\begin{definition}
  The Lusztig's restricted integral form $\mathcal{U}_q^{\mathcal{A}}(\gl_n)$ of $\mathcal{U}_q(\gl_n)$ is the $\mathcal{A}$-subalgebra of $\mathcal{U}_q(\gl_n)$ generated by
  \begin{align*}
    E_i^{(n)} &= \frac{E^n}{[n]!}, & F_i^{(n)}&=\frac{F^n}{[n]!}, &L_j& & \text{and}&& \qbinom{L_j ; c}{t} &= \prod_{s=1}^{t}\frac{q^{c+1-s}L_j-q^{s-c-1}L_j^{-1}}{q^s-q^{-s}},
  \end{align*}
  for $1\leq i < n$ and $1 \leq j \leq n$.
\end{definition}

We denote by $\mathcal{U}_q^{\mathcal{A}}(\gl_n)^{?} = \mathcal{U}_q(\gl_n)^{?}\cap \mathcal{U}_q^{\mathcal{A}}(\gl_n)$ for $? \in \{{<}0,{\leq}0 , 0 ,{\geq}0, {>}0\}$. The restricted integral form still has a triangular decomposition as an $\mathcal{A}$-module
\[
  \mathcal{U}_q^{\mathcal{A}}(\gl_n) \simeq \mathcal{U}_q^{\mathcal{A}}(\gl_n)^{<0} \otimes \mathcal{U}_q^{\mathcal{A}}(\gl_n)^0 \otimes \mathcal{U}_q^{\mathcal{A}}(\gl_n)^{>0}.
\]

The comultiplication, counit and antipode restricts to the integral form and endow it with a structure of a Hopf algebra. Using the explicit form of the quasi-$R$-matrix $\Theta$, one may show that it lies in (a completion of) $\mathcal{U}_q^{\mathcal{A}}(\gl_n)^{>0}\otimes \mathcal{U}_q^{\mathcal{A}}(\gl_n)^{<0}$. 

\subsection{Representations}
\label{sec:rep}

Since $\mathcal{U}_q(\gl_n)$ is a Hopf algebra, the tensor product of two $\mathcal{U}_q(\gl_n)$-modules is still an $\mathcal{U}_q(\gl_n)$-module. Using the antipode $S$, we also equip the dual $V^*$ of an $\mathcal{U}_q(\gl_n)$-module $V$ with a structure of an $\mathcal{U}_q(\gl_n)$-module:
\[
  (x\cdot\varphi)(v) = \varphi(S(x)\cdot v), 
\]
for any $x\in\mathcal{U}_q(\gl_n)$, $v\in V$ and $\varphi \in V^*$.

\subsubsection{Rational representations}

For an $\mathcal{U}_q(\gl_n)$-module $M$ and $\lambda\in P$, we define the $\lambda$-weight space of $M$ as
\[
  M_\lambda=\left\{m\in M\ \middle\vert\ L_im = q^{\langle\lambda,\varepsilon_i\rangle},\ \text{for all } 1 \leq i \leq n\right\}.
\]

We will only consider weight modules of type $1$: these modules are direct sums of their weight spaces as defined above. For any $\lambda\in P$, the Verma module of highest weight $\lambda$ is
\[
  M(\lambda) = \mathcal{U}_q(\gl_n)\otimes_{\mathcal{U}_q(\gl_n)^{\geq 0}} \mathbb{Q}(q)v_\lambda,
\]
where $\mathbb{Q}(q)v_\lambda$ is the one dimensional representation of $\mathcal{U}_q(\gl_n)^{\geq 0}$ given by
\[
  E_i\cdot v_\lambda = 0 \quad \text{and} \quad L_j\cdot v_\lambda = q^{\langle\varepsilon_j,\lambda\rangle}v_\lambda,
\]
for every $1\leq i < n$ and $1 \leq j \leq n$.

\begin{proposition}
  If $\lambda\in P^+$ then $M(\lambda)$ has a unique irreducible finite dimensional quotient $L(\lambda)$. Moreover, every irreducible finite dimensional weight module is isomorphic to a $L(\lambda)$ for a unique $\lambda\in P^+$: irreducible finite dimensional weight modules are parameterized by $P^+$.
\end{proposition}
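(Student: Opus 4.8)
The plan is to run the standard highest-weight argument, exactly as for $\mathcal{U}_q(\mathfrak{sl}_n)$ in \cite[Chapter 5]{jantzen}, and keep track of the extra central torus element $L_{\varpi_n}$ coming from the reductive algebra $\gl_n$. First I would show that $M(\lambda)$ has a unique maximal proper submodule. By the triangular decomposition, $M(\lambda) = \mathcal{U}_q(\gl_n)^{<0}v_\lambda$, so every weight $\mu$ of $M(\lambda)$ lies in $\lambda - \sum_{i=1}^{n-1}\mathbb{N}\alpha_i$ and the highest weight space $M(\lambda)_\lambda = \mathbb{Q}(q)v_\lambda$ is one-dimensional. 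Hence any proper submodule $N$ satisfies $N_\lambda = 0$, so the sum of all proper submodules is again proper; this gives the unique maximal submodule and therefore a unique simple quotient $L(\lambda)$, generated by the image of $v_\lambda$ and of highest weight $\lambda$.

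Next comes the main point: for $\lambda\in P^+$ the module $L(\lambda)$ is finite-dimensional. Restricting the action to the $\mathfrak{sl}_2$-triple $(E_i,F_i,K_i)$ and using the divided-power relations, one checks that $F_i^{(\langle\lambda,\alpha_i\rangle+1)}v_\lambda$ is annihilated by every $E_j$, hence generates a proper submodule of $M(\lambda)$ and so vanishes in $L(\lambda)$. Following the integrability arguments of \cite[Chapter 5]{jantzen}, one deduces that all $E_i$ and $F_i$ act locally nilpotently on $L(\lambda)$, so $L(\lambda)$ is an integrable $\mathcal{U}_q(\mathfrak{sl}_n)$-module; consequently its set of weights is stable under the symmetric group $W$ acting by permutation of coordinates. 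Being contained in $\lambda - \sum\mathbb{N}\alpha_i$, a $W$-stable set is finite, and since each weight space is finite-dimensional (the PBW-type basis of $\mathcal{U}_q^{<0}$ gives only finitely many monomials in each fixed weight), $L(\lambda)$ is finite-dimensional. The central element $L_{\varpi_n}$ acts by the scalar $q^{\langle\varpi_n,\lambda\rangle}$, consistent with $L(\lambda)$ being a type-$1$ weight module.

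Conversely, let $V$ be an irreducible finite-dimensional weight module of type $1$. Its weight set is finite and nonempty, so it has a maximal element $\mu$ for the dominance order; any nonzero $v\in V_\mu$ is then annihilated by every $E_i$ (else $E_iv\in V_{\mu+\alpha_i}$ would be a nonzero vector of strictly larger weight), so $V$ is a quotient of $M(\mu)$, and by irreducibility $V\simeq L(\mu)$. Applying the representation theory of $\mathcal{U}_q(\mathfrak{sl}_2)$ at the generic parameter $q$ to each $(E_i,F_i,K_i)$-action on the finite-dimensional space $V$ forces $\langle\mu,\alpha_i\rangle\geq 0$, so $\mu\in P^+$. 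Finally, $\lambda$ is recovered from $L(\lambda)$ as its unique highest weight (the unique maximal element of its weight set), which gives the uniqueness of the parameter.

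The one genuinely non-formal step is the finite-dimensionality of $L(\lambda)$ for dominant $\lambda$: everything else is bookkeeping with weights and the triangular decomposition, but the passage from "$F_i^{(\langle\lambda,\alpha_i\rangle+1)}v_\lambda = 0$ in $L(\lambda)$" to "$L(\lambda)$ is integrable, hence its character is $W$-invariant and finitely supported" is the quantum analogue of the classical Weyl-orbit argument and should be quoted from \cite[Chapter 5]{jantzen} or \cite[\S 10.1]{chari-pressley} rather than reproved; the reductive enhancement to $\gl_n$ only adds the harmless scalar action of $L_{\varpi_n}$, which is already pinned down by the type-$1$ weight-module hypothesis.
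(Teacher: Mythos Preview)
The paper does not actually prove this proposition: it is stated as a known fact and immediately followed by remarks on the one-dimensional modules $\moddet{q}^{\otimes k}$, with no \texttt{proof} environment. Your argument is the standard highest-weight proof (triangular decomposition $\Rightarrow$ unique maximal submodule; integrability via the $\mathfrak{sl}_2$-triples $\Rightarrow$ $W$-invariant weight set $\Rightarrow$ finite dimensionality; maximal weight $\Rightarrow$ highest-weight classification), carried out exactly as in the references the paper already cites (\cite[Chapter 5]{jantzen}, \cite[\S 10.1]{chari-pressley}), with the extra observation that the central element $L_{\varpi_n}$ only contributes a scalar. This is correct and is precisely what the author is implicitly invoking; there is nothing to compare.
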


For $\lambda=k\varpi_n$, it is easy to see that $L(\lambda)$ is one-dimensional, and we will denote by $\moddet{q}$ the module $L(\varpi_n)$. Note that $\moddet{q}^*\simeq L(-\varpi_n)$ and therefore setting $\moddet{q}^{\otimes k} = L(k\varpi_n)$ is coherent with the fact that $\moddet{q}^k\otimes\moddet{q}^l\simeq\moddet{q}^{k+l}$.

It is clear that if $\lambda\in P^+$ then $-w_0(\lambda)\in P^+$. Moreover, we have $L(\lambda)^*\simeq L(-w_0(\lambda))$.

\subsubsection{A $\mathbb{Z}$-grading}
\label{sec:grading}

Since the element $L_{\varpi_n}$ is central in $\mathcal{U}_q(\mathfrak{gl}_n)$, it induces a $\mathbb{Z}$-grading on the category of finite dimensional representations of $\mathcal{U}_q(\mathfrak{gl}_n)$: a simple object $L(\lambda)$ is of degree $\langle\lambda,\varpi_n\rangle$. Since $L_{\varpi_n}$ is group-like, this grading is of course compatible with the tensor product: every simple summand of a $L(\lambda)\otimes L(\mu)$ is of degree $\langle\lambda,\varpi_n\rangle + \langle\mu,\varpi_n\rangle$. 

\subsubsection{Braiding and pivotal structures}
\label{sec:braiding}

Using the quasi-$R$-matrix $\Theta$, we define a braiding on the category of finite dimensional weight modules. For $M$ and $M'$ two finite dimensional weight modules, we let $\Theta_{M,M'}\colon M\otimes M' \rightarrow M\otimes M'$ be the $\mathbb{Q}(q)$-linear isomorphism given by the action of $\Theta$. Since $M$ and $M'$ are finite dimensional, only a finite number of $\Theta_\lambda$ acts non-trivially: $\Theta_\lambda(M_\mu\otimes M'_{\mu'})\subset M_{\mu+\lambda}\otimes M'_{\mu'-\lambda}$.

Now consider the map $f_{M,M'}\colon M\otimes M' \rightarrow M\otimes M'$ given by
\[
  f_{M,M'}(m\otimes m') = q^{\langle\mu,\mu'\rangle}m\otimes m'
\]
for every $m\in M_\mu$ and $m\in M'_{\mu'}$. One easily check on the generators of $\mathcal{U}_q(\gl_n)$ that for every $x\in \mathcal{U}_q(\gl_n)\otimes \mathcal{U}_q(\gl_n)$, any $m\in M$ and $m\in M'$ one has
\begin{equation}
  \label{eq:action_f}
  x\cdot f(m\otimes m') = f(\Psi(x)\cdot (m\otimes m')).
\end{equation}

We then define $c_{M,M'} = \tau\circ f_{M,M'} \circ \Theta_{M,M'}$, where $\tau(m\otimes m') = m'\otimes m$. Combining \eqref{eq:theta_comult} and \eqref{eq:action_f}, one obtains that $c_{M,M'}$ is an $\mathcal{U}_q(\gl_n)$-equivariant map.
% \begin{align*}
%   c_{M,M'}(x\cdot m\otimes m') &= \tau\circ f_{M,M'}((\Theta\Delta(x)) \cot m\otimes m')\\
%   &= \tau\circ f_{M,M'}((\Psi(\Delta^\op(x))\Theta) \cot m\otimes m')\\
%   &= \tau(\Delta^\op(x)\cdot f_{M,M'}(\Theta \cot m\otimes m'))\\
%   &= x\cdot c_{M,M'}(m\otimes m').
% \end{align*}

Using \eqref{eq:theta_yb} and \eqref{eq:action_f}, one shows that $c_{-,-}$ satisfy the hexagon axioms:
\[
  c_{L,M\otimes N}=(\id_M\otimes c_{L,N})\circ(c_{L,M}\otimes \id_{N})\quad\text{and}\quad c_{L\otimes M,N}=(c_{L,N}\otimes \id_M)\circ(\id_L\otimes c_{M,N}).
\]

\begin{proposition}
  The family of maps $c_{M,M'}$ endow the category of finite dimensional weight modules over $\mathcal{U}_q(\gl_n)$ with a structure of a braided category.
\end{proposition}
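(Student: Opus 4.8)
The plan is to check the remaining defining properties of a braiding as recalled in \cref{sec:non-deg_sl-deg}: that each $c_{M,M'}$ is a natural isomorphism and that the hexagon identities hold, the latter having already been verified above, as has the $\mathcal{U}_q(\gl_n)$-equivariance of $c_{M,M'}$. So the work left to assemble is well-definedness, naturality, and invertibility, and then to invoke the pieces already in place.

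First I would note that $c_{M,M'}$ is a bona fide $\mathbb{Q}(q)$-linear endomorphism of $M\otimes M'$: although $\Theta$ lives only in a completion of $\mathcal{U}_q(\gl_n)^{>0}\otimes \mathcal{U}_q(\gl_n)^{<0}$, finite-dimensionality of $M$ and $M'$ forces all but finitely many $\Theta_\lambda$ to act by zero, so $\Theta_{M,M'}$ is an honest operator; $f_{M,M'}$ is the diagonal operator with eigenvalue $q^{\langle\mu,\mu'\rangle}$ on $M_\mu\otimes M'_{\mu'}$; and $\tau$ is the linear flip $m\otimes m' \mapsto m'\otimes m$. Equivariance of $c_{M,M'}$ was obtained above by combining \eqref{eq:theta_comult} with \eqref{eq:action_f}, so $c_{M,M'}$ is a morphism in the category of finite-dimensional weight modules.

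For naturality, write $c_{M,M'}=\tau\circ f_{M,M'}\circ \Theta_{M,M'}$ and treat each factor separately. The flip $\tau$ is visibly natural. Any morphism of weight modules preserves weight spaces, hence commutes with the scalars defining $f$, which gives naturality of $f_{-,-}$ in each variable; and $\Theta_{M,M'}$ is by construction the action of a fixed element in a completion of $\mathcal{U}_q(\gl_n)^{>0}\otimes\mathcal{U}_q(\gl_n)^{<0}$, so it commutes with every pair of module maps. As a composition of natural transformations, $c_{-,-}$ is binatural. For invertibility, $\Theta$ is invertible, $f_{M,M'}$ has inverse the diagonal operator $q^{-\langle\mu,\mu'\rangle}$ on $M_\mu\otimes M'_{\mu'}$, and $\tau$ is an involution, so $c_{M,M'}^{-1}=\Theta_{M,M'}^{-1}\circ f_{M,M'}^{-1}\circ \tau$. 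Combined with the hexagon identities already proved, whose form matches the braiding axioms of \cref{sec:non-deg_sl-deg} after the identification $(L,M,N)=(X,Y,Z)$, this shows that $(c_{M,M'})_{M,M'}$ is a braiding.

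There is no serious obstacle here: the substantive computations — equivariance and the two hexagons — have already been carried out, and what remains is bookkeeping. The one point deserving a little care is the completion, namely checking that finite-dimensionality of the modules turns both $\Theta$ and $\Theta^{-1}$ into genuine linear maps on the relevant tensor products, so that all the compositions above make sense termwise.
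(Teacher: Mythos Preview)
Your proposal is correct and matches the paper's approach: the paper places all the substantive work (equivariance from \eqref{eq:theta_comult} and \eqref{eq:action_f}, the hexagon axioms from \eqref{eq:theta_yb} and \eqref{eq:action_f}) before the proposition and gives no separate proof, treating the statement as a summary of what precedes it. You have simply made explicit the routine checks of well-definedness, naturality, and invertibility that the paper leaves to the reader.
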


\medskip

We now turn to pivotal structures on the category of finite dimensional weight modules over $\mathcal{U}_q(\gl_n)$. Since the square of the antipode is given by conjugation by $L_{2\rho}$, the map $a_M \colon M \rightarrow M^{**}$ sending $m\in M$ to $a_M(m)\in M^{**}$ defined by
\[
  a_M(m)(\varphi) = \varphi(L_{2\rho} \cdot m)
\]
for any $\varphi \in M^*$ is an isomorphism of $\mathcal{U}_q(\gl_n)$-modules. But conjugation by the element $L_{2\rho}L_{k\varpi_n}$ also gives the square of the antipode since $L_{k\varpi_n}$ is central. We define therefore analogously an isomorphism $a_{k,M} \colon M \rightarrow M^{**}$.

\begin{proposition}
  The family of maps $a_{k,M}$ endow the category of finite dimensional weights modules over $\mathcal{U}_q(\gl_n)$ with a structure of a pivotal category. Moreover, the pivotal structure is spherical if and only if $k=1-n$.
\end{proposition}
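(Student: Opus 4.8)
The plan is to verify the three defining properties of a pivotal structure for the family $a_{k,M}$, and then to compute the left and right quantum dimensions to identify precisely when sphericity holds. First I would check that each $a_{k,M}\colon M\to M^{**}$ is a well-defined isomorphism of $\mathcal{U}_q(\gl_n)$-modules: since $L_{k\varpi_n}$ is central and group-like, conjugation by $L_{2\rho}L_{k\varpi_n}$ equals conjugation by $L_{2\rho}$, which is $S^2$; hence the map $m\mapsto (\varphi\mapsto \varphi(L_{2\rho}L_{k\varpi_n}\cdot m))$ intertwines the action exactly as in the proof for $a_M$. Invertibility is clear since $L_{2\rho}L_{k\varpi_n}$ acts invertibly on any weight module. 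Next I would check naturality in $M$ (immediate, as the formula is given by the action of a fixed algebra element) and monoidal compatibility $a_{k,M\otimes N}=a_{k,M}\otimes a_{k,N}$ under the identification $(M\otimes N)^{**}\cong M^{**}\otimes N^{**}$. This last point reduces to the fact that $L_{2\rho}L_{k\varpi_n}$ is group-like, so $\Delta(L_{2\rho}L_{k\varpi_n}) = (L_{2\rho}L_{k\varpi_n})\otimes(L_{2\rho}L_{k\varpi_n})$, which gives the required factorization on tensor products; one must be slightly careful with the sign/ordering conventions relating $(M\otimes N)^*$ to $N^*\otimes M^*$, but this is the standard bookkeeping.

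For the sphericity claim, I would compute $\dim(M)$ and $\dim(M^*)$ for $M=L(\lambda)$ using the right quantum trace defined via $a_{k,M}$. Unwinding the definition, $\dim(L(\lambda))$ equals the trace of the action of $L_{2\rho}L_{k\varpi_n}$ on $L(\lambda)$, i.e. $\sum_{\mu}\dim(L(\lambda)_\mu)\, q^{\langle 2\rho+k\varpi_n,\mu\rangle}$, which is the principal specialization weighted by the extra central character $q^{k\langle\varpi_n,\lambda\rangle}$. Similarly $\dim(L(\lambda)^*)=\dim(L(-w_0\lambda))$ equals $\sum_{\mu}\dim(L(\lambda)_\mu)\,q^{-\langle 2\rho,\mu\rangle}\,q^{-k\langle\varpi_n,\lambda\rangle}$, using that the weights of $L(-w_0\lambda)$ are the negatives of those of $L(\lambda)$ and that $w_0$ preserves $\langle 2\rho,-\rangle$ up to the relevant symmetry. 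The Weyl-symmetry of the character of $L(\lambda)$ under $w_0$ together with $w_0(\rho_{\gl})$-type identities shows that $\dim(L(\lambda))=\dim(L(\lambda)^*)$ for \emph{all} $\lambda\in P^+$ precisely when the exponent of the central correction matches the shift coming from $\rho$ versus $-w_0\rho$; carrying out this comparison on, say, the one-dimensional module $\moddet{q}=L(\varpi_n)$ and on the vector representation $L(\varepsilon_1)$ pins down the unique value $k=1-n$ (this is the value for which $L_{2\rho}L_{k\varpi_n}=L_{2\rho-(n-1)\varpi_n}$ acts by a character symmetric under $w_0$, equivalently the associated pivotal element corresponds to the ``$\mathfrak{sl}_n$-like'' half-sum of positive roots). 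I would verify that this $k$ works in general by the $w_0$-invariance of characters.

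The main obstacle is the sphericity computation: one must correctly match the $\gl_n$ conventions (weights in $P=\mathbb{Z}^n$, the non-semisimple center generated by $L_{\varpi_n}$) against the $\mathfrak{sl}_n$ picture, and in particular identify the shift $2\rho$ — here $\rho=\sum(n-i)\varepsilon_i$ rather than the symmetric choice $\sum\frac{n+1-2i}{2}\varepsilon_i$ — so that the discrepancy is exactly $\tfrac{n-1}{2}\varpi_n$ on each side, forcing $k=1-n$. The verification that the maps form a pivotal structure is routine Hopf-algebraic bookkeeping and should be dispatched quickly by citing that $L_{2\rho}L_{k\varpi_n}$ is group-like and implements $S^2$; I would present that part compactly and devote the bulk of the argument to the dimension comparison.
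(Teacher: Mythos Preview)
Your proposal is correct and complete, but it takes a different route from the paper's proof. The paper does not compare quantum dimensions directly; instead it invokes the equivalence ``$a$ is spherical $\iff$ the associated twist $\theta=a\circ u^{-1}$ is a ribbon'' stated just before the proposition. Since $\theta_{X(\lambda)}$ is the scalar $q^{\langle\lambda,\lambda+2\rho+k\varpi_n\rangle}$, sphericity becomes the scalar identity $\langle\lambda,\lambda+2\rho+k\varpi_n\rangle=\langle-w_0\lambda,-w_0\lambda+2\rho+k\varpi_n\rangle$ for all $\lambda\in P^+$, and the relation $-w_0(\rho)=\rho+(1-n)\varpi_n$ immediately yields $k=1-n$.

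Your approach computes $\dim_k(L(\lambda))=\Tr_{L(\lambda)}(L_{2\rho+k\varpi_n})$ and $\dim_k(L(\lambda)^*)=\Tr_{L(\lambda)}(L_{-(2\rho+k\varpi_n)})$ and compares them via the $W$-invariance of characters; the necessary condition is extracted from $\moddet{q}$, and sufficiency from the fact that $-w_0(2\rho+(1-n)\varpi_n)=2\rho+(1-n)\varpi_n$. This is entirely valid (and your remark that $\langle\varpi_n,\mu\rangle=\langle\varpi_n,\lambda\rangle$ for all weights $\mu$ of $L(\lambda)$ is what justifies pulling the central factor out of the sum). The trade-off is that the paper's twist argument is shorter because it reduces to a single scalar equation per $\lambda$, at the cost of relying on the spherical $\Leftrightarrow$ ribbon characterization; your argument attacks the definition of sphericity head-on but needs the extra step of invoking Weyl invariance of characters to handle the full dimension sums. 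Both hinge on the same underlying identity $-w_0(\rho)=\rho+(1-n)\varpi_n$.
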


\begin{proof}
  The twist associated to the pivotal structure $a_k$ is given on the simple object $L(\lambda)$ by multiplication by $q^{\langle\lambda,\lambda+2\rho+p\varpi_n\rangle}$. Since $L(\lambda)^*\simeq L(-w_0(\lambda))$, the pivotal structure is spherical if and only if for all $\lambda\in P^+$, we have $q^{\langle\lambda,\lambda+2\rho+k\varpi_n\rangle}=q^{\langle-w_0(\lambda),-w_0(\lambda)+2\rho+k\varpi_n\rangle}$. As $q$ is an indeterminate over $\mathbb{Q}$ the last condition is equivalent to $\langle\lambda,\lambda+2\rho+k\varpi_n\rangle=\langle-w_0(\lambda),-w_0(\lambda)+2\rho+k\varpi_n\rangle$. But $-w_0(\rho) = \rho+(1-n)\varpi_n$ so that
  \[
    \langle-w_0(\lambda),-w_0(\lambda)+2\rho+k\varpi_n\rangle = \langle\lambda,\lambda+2\rho+k\varpi_n\rangle +2((1-n)-k)\langle\lambda,\varpi_n\rangle.
  \]
  Therefore the pivotal structure $a_{k,-}$ is spherical if and only if $k=1-n$. 
\end{proof}

The right quantum trace with respect to the pivotal structure $a_{k,-}$ is denoted $\Tr_k$, the associated quantum dimension by $\dim_k$ and the associated twist by $\theta_{k,-}$. 

\subsubsection{Integral representations}

Since we work with Lusztig's restricted integral form, we need to adapt slightly the definition of a weight space. Given $M$ an $\mathcal{U}_q^{\mathcal{A}}(\gl_n)$-module and $\lambda\in P$, the $\lambda$-weight space of $M$ is
\[
  M_\lambda=\left\{m\in M\ \middle\vert\ L_i\cdot m = q^{\langle\lambda,\varepsilon_i\rangle}m,\ \qbinom{L_i;0}{t}\cdot m = \qbinom{\langle\lambda,\varepsilon_i\rangle}{t} m,\text{ for all } 1 \leq i \leq n \text{ and } t\in \mathbb{N}\right\}.
\]
We will again only consider weight modules, that is modules which are sum of their weight spaces.

There also exists an integral version of the representations $M(\lambda)$ and $L(\lambda)$. Denote by $M^{\mathcal{A}}(\lambda)$ (resp. $L^{\mathcal{A}}(\lambda)$) the $\mathcal{U}_q^{\mathcal{A}}(\gl_n)$-submodule of $M(\lambda)$ (resp. of $L(\lambda)$) generated by $v_\lambda$. Then
\[
  M^{\mathcal{A}}(\lambda)\otimes_{\mathcal{A}}\mathbb{Q}(q) \simeq M(\lambda)\quad\text{and}\quad L^{\mathcal{A}}(\lambda)\otimes_{\mathcal{A}}\mathbb{Q}(q) \simeq L(\lambda).
\]
The module $L(\lambda)$ is the integral Weyl module of highest weight $\lambda$.

\medskip

Similarly to the braiding described above for finite dimensional weight modules over $\mathcal{U}_q(\gl_n)$ one defines a braiding structure on the category of finite dimensional weight modules over $\mathcal{U}_q^{\mathcal{A}}(\gl_n)$: we have already seen that the quasi-$R$-matrix $\Theta$ lies in the Lusztig's restricted integral form of $\mathcal{U}_q(\gl_n)$. One also has the family of pivotal structures $a_{k,-}$.

\subsection{Specialization, tilting modules and semisimplification}
\label{sec:tilting}

Let $d> 0$ be an integer and $\xi=\exp(i\pi/d)$. We define the quantum group $\mathcal{U}_\xi(\gl_n)$ as the specialization of Lusztig's restricted integral form:
\[
  \mathcal{U}_\xi(\gl_n) = \mathcal{U}_q^{\mathcal{A}}(\gl_n)\otimes_{\mathcal{A}}\mathbb{C},
\]
where we see $\mathbb{C}$ as an $\mathcal{A}$-algebra via the $\mathbb{Z}$-linear map $q\mapsto \xi$. Since $\xi$ is a primitive $2d$-th root of unity, we have extra relations in this specialization: for example $E_i^d=F_i^d=0$.

\medskip

The construction of a fusion category from the quantum enveloping algebra of a simple Lie algebra extends to our situation with $\gl_n$, which is only a reductive Lie algebra. We recall quickly the main steps of this construction:
\begin{enumerate}
\item We have at our disposal the specialization of the Weyl module $L(\lambda)\otimes_{\mathcal{A}}\mathbb{C}$. We say that a module $M$ over $\mathcal{U}_\xi(\gl_n)$ is tilting if both $M$ and $M^*$ have a filtration by specializations of Weyl modules.
\item One shows that the category of tilting modules is stable under direct sum, tensor product and duality.
\item We semisimplify the monoidal category of tilting modules by killing negligible modules with respect to the pivotal structure $a_{0,-}$ (or equivalently any $a_{k,-}$), see \cite{etingof-ss} for a description of this procedure.
\end{enumerate}

When doing these steps with the quantum enveloping algebra of a simple Lie algebra, one obtains a fusion category, with a braiding and a pivotal (even spherical) structure. Here, since we work with $\gl_n$, we do not have a finite number of simple objects. Let us denote by $\mathcal{C}_\xi$ the category obtained with this procedure. One shows that the simple objects are given by the images of the $L(\lambda)\otimes_{\mathcal{A}} \mathbb{C}$ for $\lambda$ in the alcove
\[
  C_{n,d} =\left\{ \lambda\in P^+\ \middle\vert \langle\lambda,\varepsilon_1-\varepsilon_n\rangle \leq d-n\right\}. 
\]
In order to distinguish the simple objects of $\mathcal{C}_\xi$ with the Weyl modules, we denote by $X(\lambda)$ the simple object in $\mathcal{C}_\xi$ parameterized by $\lambda\in C_{n,d}$. We nonetheless use the notation $\moddet{\xi}^{\otimes k}$ for $X(k\varpi_n)$. The category $\mathcal{C}_\xi$ inherits the $\mathbb{Z}$-grading from \cref{sec:grading}. Note that this grading is not obtained from the action of the element $L_{\varpi_n}$ since $L_{\varpi_n}^{2d} = 1$ and we would only obtain a grading by the group $\mathbb{Z}/2d\mathbb{Z}$.

Note that $\mathcal{C}_\xi$ is non-zero if and only if $d\geq n$. From now on, we will always work under this assumption. Even if $d=n$, we have an infinite number of simple objects since $\varpi_n$ and its multiples are in $C_{n,d}$. The tensor product of $X(\varpi_i)$ with $X(\lambda)$ is given by
\begin{equation}
  \label{eq:tensor_fundamental}
  X(\varpi_i)\otimes X(\lambda) \simeq \bigoplus_{\substack{1\leq j_1 < \cdots < j_i \leq n\\\lambda+\varepsilon_{j_1}+\cdots+\varepsilon_{j_i}\in C_{n,d}}}X(\lambda+\varepsilon_{j_1}+\cdots+\varepsilon_{j_i}),
\end{equation}
see \cite[(3.2)]{andersen-stroppel}. It is also easy to check that the invertible objects are exactly of the form $X((d-n)\varpi_i+r\varpi_n)$ for some $1 \leq i \leq n$ and $r\in \mathbb{Z}$. Moreover,
\[
  X((d-n)\varpi_i+r\varpi_n) \otimes X(\lambda) \simeq X(\sh^i(\lambda) + (d-n)\varpi_i+r\varpi_n),
\]
where $\sh(\lambda) = \lambda_n\varepsilon_1 + \sum_{i=1}^{n-1}\lambda_i\varepsilon_{i+1}$ if $\lambda=\sum_{i=1}^n\lambda_i\varepsilon_i$.

The category $\mathcal{C}_\xi$ is braided, with braiding given by (the image of) $c_{-,-}$. We also have at our disposal several pivotal structures, given by (the image of) $a_{k,-}$. As before, the right quantum trace with respect to the pivotal structure $a_{k,-}$ is denoted $\Tr_k$ and the quantum dimension by $\dim_k$. 

\begin{proposition}
  \label{prop:S-mat-gln}
  The $S$-matrix of $\mathcal{C}_\xi$ is given by
  \[
    S_{X(\lambda),X(\mu)}=\xi^{\langle\lambda+\mu,k\varpi_n\rangle}\frac{\sum_{w\in W}(-1)^{l(w)}\xi^{2\langle w(\lambda+\rho),\mu+\rho\rangle}}{\sum_{w\in W}(-1)^{l(w)}\xi^{2\langle w(\rho),\rho\rangle}},
  \]
  for all $\lambda,\mu\in C_{n,d}$.

  The value of the twist associated with the pivotal structure $a_{k,-}$ on the simple object $X(\lambda)$ is given by $\xi^{\langle\lambda,\lambda+2\rho+k\varpi_n\rangle}$.  
\end{proposition}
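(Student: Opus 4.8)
The plan is to settle the twist by quoting a computation already done, and to obtain the $S$-matrix by reducing to the generic category of weight $\mathcal{U}_q(\gl_n)$-modules over $\Bbbk$, where the quantum trace of a double braiding is computed by the classical Reshetikhin--Turaev/Verlinde mechanism. For the twist there is nothing new to do: in the proof of the proposition above characterising when $a_{k,-}$ is spherical it was shown that the twist associated with $a_{k,-}$ acts on the simple module $L(\lambda)$ by the scalar $q^{\langle\lambda,\lambda+2\rho+k\varpi_n\rangle}$. Since the braiding and the pivotal structures $a_{k,-}$, hence the twist $\theta_{k,-}$, are already defined on weight modules over $\mathcal{U}_q^{\mathcal{A}}(\gl_n)$, and since $X(\lambda)$ is the image in $\mathcal{C}_\xi$ of the specialization $L^{\mathcal{A}}(\lambda)\otimes_{\mathcal{A}}\mathbb{C}$, which for $\lambda\in C_{n,d}$ is a simple tilting module, the twist survives both specialization and semisimplification, giving $\theta_{k,X(\lambda)}=\xi^{\langle\lambda,\lambda+2\rho+k\varpi_n\rangle}$.

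\textbf{Reduction of the $S$-matrix to the generic category.} All the relevant data — braiding, pivotal structures, hence left quantum traces $\Tr_k$ — are defined over $\mathcal{U}_q^{\mathcal{A}}(\gl_n)$ and are compatible with the base change $q\mapsto\xi$, and semisimplification is trace-preserving. Therefore $S_{X(\lambda),X(\mu)}$ equals the value at $q=\xi$ of
\[
  \bar S_{\lambda,\mu}(q):=\Tr_k\bigl(c_{L(\mu),L(\lambda)}\circ c_{L(\lambda),L(\mu)}\bigr)
\]
computed in the (semisimple) category of finite-dimensional weight $\mathcal{U}_q(\gl_n)$-modules over $\Bbbk$; this is a Laurent polynomial, since there it equals $\theta_{k,L(\lambda)}^{-1}\theta_{k,L(\mu)}^{-1}\sum_\nu N_{\lambda,\mu}^\nu\theta_{k,L(\nu)}\dim_k(L(\nu))$, a finite integer combination of monomials and quantum Weyl dimensions.

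\textbf{The generic computation.} Set $\phi_\lambda:=(\Tr_k\otimes\id_{L(\mu)})\bigl(c_{L(\mu),L(\lambda)}\circ c_{L(\lambda),L(\mu)}\bigr)\in\End_{\mathcal{U}_q(\gl_n)}(L(\mu))$. By Schur's lemma $\phi_\lambda=s_\lambda(\mu)\id_{L(\mu)}$, so $\bar S_{\lambda,\mu}(q)=s_\lambda(\mu)\dim_k(L(\mu))$. The hexagon axioms and the naturality of the braiding show that $M\mapsto s_M(\mu)$ extends to an algebra homomorphism from the Grothendieck ring of the generic category to $\Bbbk$; it is thus determined by its values on the generators $[L(\varpi_1)],\dots,[L(\varpi_n)]$, and a direct computation for these fundamental modules, together with the fact that $\lambda\mapsto q^{\langle\lambda,k\varpi_n\rangle}\chi_{L(\lambda)}$ is likewise such a homomorphism — where $\chi_{L(\lambda)}=\sum_\nu(\dim L(\lambda)_\nu)e^\nu$ is evaluated at $e^\nu\mapsto q^{2\langle\nu,\mu+\rho\rangle}$ — gives, by Weyl's character formula,
\[
  s_\lambda(\mu)=q^{\langle\lambda,k\varpi_n\rangle}\,\frac{\sum_{w\in W}(-1)^{l(w)}q^{2\langle w(\lambda+\rho),\mu+\rho\rangle}}{\sum_{w\in W}(-1)^{l(w)}q^{2\langle w(\rho),\mu+\rho\rangle}}.
\]
On the other hand, since the central group-like $L_{\varpi_n}$ acts on $L(\mu)$ by the scalar $q^{\langle\mu,\varpi_n\rangle}$, the quantum Weyl dimension formula gives (after the harmless relabelling $w\mapsto w^{-1}$)
\[
  \dim_k(L(\mu))=q^{\langle\mu,k\varpi_n\rangle}\,\frac{\sum_{w\in W}(-1)^{l(w)}q^{2\langle w(\rho),\mu+\rho\rangle}}{\sum_{w\in W}(-1)^{l(w)}q^{2\langle w(\rho),\rho\rangle}}.
\]
Multiplying the two displays, the factor $\sum_{w\in W}(-1)^{l(w)}q^{2\langle w(\rho),\mu+\rho\rangle}$ cancels, leaving
\[
  \bar S_{\lambda,\mu}(q)=q^{\langle\lambda+\mu,k\varpi_n\rangle}\,\frac{\sum_{w\in W}(-1)^{l(w)}q^{2\langle w(\lambda+\rho),\mu+\rho\rangle}}{\sum_{w\in W}(-1)^{l(w)}q^{2\langle w(\rho),\rho\rangle}},
\]
and specializing $q\mapsto\xi$ gives the asserted formula for $S_{X(\lambda),X(\mu)}$.

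\textbf{Expected main obstacle.} The delicate point is the identification of $s_\lambda(\mu)$ with a character value: one must disentangle the $\mathfrak{sl}_n$-part of the computation — where the outcome is the familiar $S$-matrix of the modular category attached to $\mathfrak{sl}_n$ at a root of unity, and which one could equally well quote — from the two $\gl_n$-specific contributions, namely the central group-like $L_{\varpi_n}$ and the choice of the (generally non-spherical) pivotal structure $a_{k,-}$; both contribute only overall monomials, and the bookkeeping must be precise enough to land exactly on the prefactor $\xi^{\langle\lambda+\mu,k\varpi_n\rangle}$ and to make the Weyl denominators involving $\mu+\rho$ cancel. The reduction step is more routine but still relies on the facts that $L^{\mathcal{A}}(\lambda)\otimes_{\mathcal{A}}\mathbb{C}$ is a simple tilting module for $\lambda\in C_{n,d}$ and that semisimplification preserves the quantum trace, both standard in the theory of tilting modules at roots of unity.
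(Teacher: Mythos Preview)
Your proof is correct and in spirit the same as the paper's: both reduce to the standard Reshetikhin--Turaev/Bakalov--Kirillov computation of the $S$-matrix via Weyl characters, and both separate out the $\gl_n$-specific contribution coming from the central group-like element $L_{\varpi_n}$ and the pivotal structure $a_{k,-}$.

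The organisational difference is worth noting. The paper makes a single clean reduction: since $\Tr_k(f)=\Tr(L_{k\varpi_n}L_{2\rho}f)$ and $L_{k\varpi_n}$ is central and group-like, it acts on $X(\lambda)\otimes X(\mu)$ by the scalar $\xi^{\langle\lambda+\mu,k\varpi_n\rangle}$, so $S_{X(\lambda),X(\mu)}=\xi^{\langle\lambda+\mu,k\varpi_n\rangle}S^{(k=0)}_{X(\lambda),X(\mu)}$; the case $k=0$ is then quoted directly from \cite[Theorem~3.3.20]{bakalov-kirillov}. You instead carry $k$ through the generic computation, splitting $\bar S_{\lambda,\mu}(q)=s_\lambda(\mu)\dim_k(L(\mu))$, identifying each factor via the ring-homomorphism/character argument and the quantum Weyl dimension formula, and then cancelling the $\mu$-dependent Weyl denominator. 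Your route is more self-contained (it essentially reproduces the Bakalov--Kirillov argument rather than citing it) at the cost of slightly more bookkeeping; the paper's route isolates the prefactor $\xi^{\langle\lambda+\mu,k\varpi_n\rangle}$ in one line and outsources the rest. Either way the ``main obstacle'' you flag---the precise bookkeeping of the $L_{\varpi_n}$- and $a_{k,-}$-contributions---is exactly what the paper's one-line reduction dispatches.
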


\begin{proof}
  It suffices to do it for the pivotal structure $a_{0,-}$. Indeed, the quantum trace $\Tr_k$ of an element $f\in\End(X)$ is given by
  \[
    \Tr_k(f) = \Tr(L_{k\varpi_n}L_{2\rho}f),
  \]
  where $\Tr$ is the usual trace. Since $L_{k\varpi_n}$ is group-like and central, it acts by a scalar on $X(\lambda)\otimes X(\mu)$, and it is easy to check on the highest weight vector $v_\lambda\otimes v_\mu$ that $L_{k\varpi_n}\cdot v_\lambda\otimes v_\mu= \xi^{\langle\lambda+\mu,k\varpi_n\rangle}v_\lambda\otimes v_\mu$. The formula for $p=0$ is obtained using the same arguments as in \cite[Theorem 3.3.20]{bakalov-kirillov}.
\end{proof}

Note that since $\xi$ is a $2d$-th root of unity, the pivotal structure $a_{k,-}$ is spherical if $k\equiv 1-n\ [d]$ and $a_{k,-}=a_{l,-}$ if $k\equiv l\ [2d]$ since $L_{2d\varpi_n}$ acts by $1$ on any $X(\lambda)$.

\subsection{Symmetric center and modularization}
\label{sec:symmetric_center}

The category $\mathcal{C}_\xi$ has an infinite number of simple objects and we aim to produce out of it a fusion or superfusion category with invertible $S$-matrix. Therefore we need to determine the symmetric center of $\mathcal{C}_\xi$ because having an invertible $S$-matrix is equivalent to having a trivial symmetric center.

\begin{proposition}
  \label{prop:symmetric_center}
  The simple objects belonging to symmetric center of $\mathcal{C}_\xi$ are the $X((d-n)\varpi_i+ r\varpi_n)$ with $1 \leq i \leq n$ and $r\equiv i \ [d]$. The symmetric center of $\mathcal{C}_\xi$ is then pointed and tensor generated by $X((d-n)\varpi_1+\varpi_n)$.
\end{proposition}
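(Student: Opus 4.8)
The plan is to determine the transparent simple objects of $\mathcal{C}_\xi$ by a direct analysis of the $S$-matrix formula from \cref{prop:S-mat-gln}, combined with the characterization of transparency recalled above (\cite[Proposition 8.20.5]{egno}): a simple $X(\lambda)$ is transparent if and only if $S_{X(\lambda),X(\mu)} = \dim(X(\lambda))\dim(X(\mu))$ for all $\mu\in C_{n,d}$. First I would restrict attention to invertible objects. An invertible object has $\lvert\dim\rvert = 1$, and conversely, since a transparent object $X$ satisfies $S_{X,X} = \dim(X)^2$ while $|S_{X,Y}|\le \dim(X)\dim(Y)$ always, one checks that transparency forces $|\dim(X)|=1$: indeed the second orthogonality relation for the $S$-matrix (or a direct estimate) shows a transparent object must be invertible. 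So the transparent simple objects form a subset of the invertible ones, which by the discussion preceding the proposition are exactly the $X((d-n)\varpi_i + r\varpi_n)$ for $1\le i\le n$, $r\in\mathbb{Z}$.

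Next I would compute, for $\lambda = (d-n)\varpi_i + r\varpi_n$, the ratio $S_{X(\lambda),X(\mu)}/(\dim_k X(\lambda)\dim_k X(\mu))$ using the explicit character-type formula. The key observation is that $(d-n)\varpi_i + \rho$ is, up to an affine Weyl group element and a shift by a multiple of $\varpi_n$, again conjugate to $\rho$; more precisely $(d-n)\varpi_i$ is the translate by $(d-n)\varepsilon_1+\cdots+(d-n)\varepsilon_i$, and after reducing modulo the action governing the alcove $C_{n,d}$ this produces a cyclic shift $\sh^i$ on weights (this is exactly the tensoring formula $X((d-n)\varpi_i+r\varpi_n)\otimes X(\mu)\simeq X(\sh^i(\mu)+(d-n)\varpi_i+r\varpi_n)$ quoted above). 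Feeding $w(\lambda+\rho)$ into the numerator $\sum_{w\in W}(-1)^{l(w)}\xi^{2\langle w(\lambda+\rho),\mu+\rho\rangle}$ and simplifying, the $W$-sum over $\lambda+\rho$ reduces back to the $W$-sum over $\rho$ up to an overall monomial factor $\xi^{(\text{something})\langle\mu,\cdot\rangle}$ coming from the shift and the $r\varpi_n$ term. Transparency then amounts to requiring this monomial factor to equal the corresponding factor in $\dim_k X(\lambda)$ for every $\mu$, which becomes a congruence condition on the exponents: tracking the $\xi$-powers, the dependence on $\mu$ in the "shift part" contributes a factor $\xi^{2(d-n)\langle\varpi_i,\mu\rangle}$-type term that is automatically trivial (since $(d-n)$ and the alcove constraint conspire, using $\xi^{2d}=1$), while the $\varpi_n$-part contributes $\xi^{\text{const}\cdot(r - i)\langle\mu,\varpi_n\rangle/(\text{something})}$, and this is independent of $\mu$ precisely when $r \equiv i \pmod d$. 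I expect a short computation with $\xi = \exp(i\pi/d)$, $\xi^{2d}=1$, to pin down the congruence as exactly $r\equiv i\ [d]$.

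Having identified the transparent simple objects as $\{X((d-n)\varpi_i + r\varpi_n) : 1\le i\le n,\ r\equiv i\ [d]\}$, it remains to see that $\mathcal{Z}_{\sym}(\mathcal{C}_\xi)$ is pointed and tensor generated by $g := X((d-n)\varpi_1 + \varpi_n)$. Pointedness is immediate since all these objects are invertible. For the generation statement I would compute tensor powers of $g$ using the tensoring formula: $g^{\otimes i} \simeq X((d-n)\varpi_i + \text{(multiple of }\varpi_n))$, and one checks the $\varpi_n$-coefficient accumulates correctly so that the $i$-th power hits $X((d-n)\varpi_i + i\varpi_n)$ (the representative with $r=i$); then $g^{\otimes n}$ is a pure power of $\moddet{\xi}$, and since $\moddet{\xi}^{\otimes d}\simeq \mathbf{1}$ (as $d\varpi_n$ reduces to $0$ in the relevant quotient, using $\langle d\varpi_n,\varepsilon_1-\varepsilon_n\rangle=0$ and the central action), the powers of $g$ together with $\moddet{\xi}$ exhaust the list; finally $\moddet{\xi}$ itself is recovered as a suitable power of $g$ because $\gcd$ considerations (the exponents $i$ and $r=i$ move in lockstep) let one isolate it.

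The main obstacle I anticipate is the bookkeeping in the second step: correctly reducing the $W$-sum $\sum_w (-1)^{l(w)}\xi^{2\langle w((d-n)\varpi_i+\rho),\mu+\rho\rangle}$ back to the denominator sum. One must use that $(d-n)\varpi_i + \rho$ lies on a wall-translate of $\rho$ under the affine Weyl group at level related to $d$, so that the antisymmetrization collapses up to sign and a monomial — this is the $\gl_n$-analogue of the classical computation of $S$-matrix entries at invertible objects (as in \cite[Theorem 3.3.20]{bakalov-kirillov}), but the extra $\varpi_n$-direction and the non-spherical pivotal structures $a_{k,-}$ mean one has to be careful that the $k$-dependent prefactors $\xi^{\langle\lambda+\mu,k\varpi_n\rangle}$ cancel correctly in the ratio (they do, since they are symmetric and factor through the grading). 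Once that reduction is in hand, extracting the congruence $r\equiv i\ [d]$ is elementary.
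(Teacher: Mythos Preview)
Your proposal has a genuine gap in the first step. The assertion that transparency forces invertibility ``via the second orthogonality relation for the $S$-matrix (or a direct estimate)'' does not go through here: $\mathcal{C}_\xi$ has infinitely many simple objects, so there is no orthogonality relation to invoke, and the bound $|S_{X,Y}|\le \dim(X)\dim(Y)$ together with $S_{X,X}=\dim(X)^2$ does not by itself yield $|\dim(X)|=1$. More to the point, ``transparent simple $\Rightarrow$ invertible'' is false in general braided fusion categories (every simple object of $\Rep(G)$ is transparent), so any argument must use something specific to $\mathcal{C}_\xi$. The paper supplies exactly this: following Brugui\`eres, it first shows that if $X$ is transparent then $\theta_{X\otimes Y}$ is a scalar for every simple $Y$ (via the triangle inequality for the positive spherical dimensions $\dim_{1-n}$), and then it uses the concrete fusion rule \eqref{eq:tensor_fundamental} for $X(\lambda)\otimes X(\varpi_1)$ to see that the twists of the summands can agree only when there is a unique summand, which forces $\lambda$ to be of the invertible form $(d-n)\varpi_i+r\varpi_n$. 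This second step is the substantive, category-specific ingredient your sketch is missing.

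Your step 2 (deciding which invertibles are transparent by manipulating the $S$-matrix numerator directly) is a legitimate alternative to the paper's approach, which instead imposes the twist identity $\theta_{X\otimes X(\mu)}=\theta_X\theta_{X(\mu)}$ and reduces it to $(r-i)\langle\mu,\varpi_n\rangle\equiv 0\ [d]$ for all $\mu$. The two computations are equivalent, and your anticipated congruence $r\equiv i\ [d]$ is exactly what falls out.

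There is also an error in your step 3: in $\mathcal{C}_\xi$ one does \emph{not} have $\moddet{\xi}^{\otimes d}\simeq\mathbf{1}$; the simples $X(k\varpi_n)$ are pairwise non-isomorphic for $k\in\mathbb{Z}$ (this is precisely why $\mathcal{C}_\xi$ has infinitely many simples). What is true is $g^{\otimes n}\simeq\moddet{\xi}^{\otimes d}$, and then $g^{\otimes(i+sn)}\simeq X((d-n)\varpi_i+(i+sd)\varpi_n)$ as $s$ ranges over $\mathbb{Z}$, which exhausts the list $\{r\equiv i\ [d]\}$. This is how the paper concludes.
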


\begin{proof}
  We will use the same strategy as in \cite[Section 4]{bruguieres} by proving first that a transparent simple object is invertible and then detecting the transparent objects among the invertible ones.

  \medskip
  
  Let $X$ be a transparent simple object. Then it satisfies $c_{Y,X}\circ c_{X,Y}=\id_{X\otimes Y}$ for all object $Y$. We now use the ribbon $\theta_{1-n,-}$ and obtain that $\theta_{1-n,X}\otimes \theta_{1-n,Y} = \theta_{1-n,X\otimes Y}$. Now suppose that moreover $Y$ is simple, and that $X\otimes Y \simeq \bigoplus_{i}Z_i$. Taking the quantum trace $\Tr_{1-n}$, we obtain that $\theta_{1-n,X}\theta_{1-n,Y}\sum_{i}\dim_{1-n}(Z_i)=\sum_{i}\dim_{1-n}(Z_i)$. The twist being a power of $\xi$, by taking the norm we find that
  \[
    \left\lvert\sum_{i}\dim_{1-n}(Z_i)\right\rvert = \left\lvert\sum_{i}\theta_{1-n,Z}\dim_{1-n}(Z_i)\right\rvert.
  \]
  But with the spherical structure $a_{1-n,-}$, the quantum dimension of any simple object is a positive real number, and therefore
  \[
    \sum_{i}\dim_{1-n}(Z_i) = \left\lvert\sum_{i}\theta_{1-n,Z_i}\dim_{1-n}(Z_i)\right\rvert.
  \]
  By the case of equality in the triangle inequality, we deduce that the argument of $\theta_{1-n,Z_i}\dim_{1-n}(Z_i)$ does not depend on $i$. Since $\dim_{1-n}(Z_i)>0$ and $\theta_{1-n,Z_i}$ is a root of unity, we deduce that $\theta_{1-n,Z_i}$ does not depend on $i$ so that $\theta_{1-n,X\otimes Y}$ is a scalar multiple of $\id_{X\otimes Y}$. It is then also true for any twist $\theta_{k,-}$: we check that $\theta_{k,X(\lambda)} = \xi^{(k+n-1)\langle\lambda,\varpi_n\rangle}\theta_{1-n,X(\lambda)}$ and it remains to check that $\langle\lambda,\varpi_n\rangle$ only depends on $X$ and $Y$ if $X(\lambda)$ is a summand of $X\otimes Y$. But this is immediate since $\langle\lambda,\varpi_n\rangle$ is equal to the degree of $X\otimes Y$ for the $\mathbb{Z}$-grading.

  Now, we show that a simple object $X$ such that $\theta_{0,X\otimes Y}$ is a scalar for any simple object $Y$ is invertible. If $d=n$ then any simple object is invertible, so we may and will suppose that $d> n$. Suppose that $X\simeq X(\lambda)$, we take $Y=X(\varpi_1)$ and look at the decomposition of $X\otimes X(\varpi_1)$. Thanks to \eqref{eq:tensor_fundamental}, it is given by
  \[
    X(\lambda)\otimes X(\varpi_1) \simeq \bigoplus_{\substack{1 \leq j \leq n\\\lambda+\varepsilon_j\in C_{n,d}}}X(\lambda+\varepsilon_j).
  \]
  Suppose that there exists $1\leq i < j \leq n$ such that $\lambda+\varepsilon_i\in C_{n,d}$ and $\lambda+\varepsilon_j \in C_{n,d}$. Since $\theta_{0,X\otimes Y}$ is a scalar, we have $\theta_{0,X(\lambda+\varepsilon_i)}=\theta_{0,X(\lambda+\varepsilon_j)}$, that is
  \[
    \langle\lambda+\varepsilon_i,\lambda+\varepsilon_i+2\rho\rangle \equiv \langle\lambda+\varepsilon_j,\lambda+\varepsilon_j+2\rho\rangle\ [2d],
  \]
  which is equivalent to $2\lambda_i+2(n-i)+1\equiv 2\lambda_j+2(n-j)+1\ [2d]$. Then $\lambda_{i}-\lambda_{j} \equiv i-j\ [d]$ and since $d-n \geq \lambda_i-\lambda_j \geq 0$ we have $\lambda_i-\lambda_j = d+i-j$. As $0 > i-j > -n$, this leads to a contradiction.

  Hence there exists a unique $1 \leq i \leq n$ such that $\lambda+\varepsilon_i \in C_{n,d}$ and this is possible if and only if $\lambda=(d-n)\varpi_i+r\varpi_n$ for some $r$ and $X$ is thus invertible.

  \medskip

  We finally determine the transparent objects among the invertible ones. Let $1 \leq i \leq n$ and $X=X((d-n)\varpi_i+r\varpi_n)$ be an invertible object. Since $X\otimes X(\lambda) \simeq X(\sh^i(\lambda) + (d-n)\varpi_i + r\varpi_n)$, the object $X$ is transparent if and only if $\theta_{0,X(\sh^i(\lambda) + (d-n)\varpi_i + r\varpi_n)} = \theta_{0,X}\theta_{0,X(\lambda)}$ for all $\lambda\in C_{n,d}$. This last equality is equivalent to
  \begin{multline*}
    \langle\sh^i(\lambda) + (d-n)\varpi_i + r\varpi_n,\sh^i(\lambda) + (d-n)\varpi_i + r\varpi_n+2\rho\rangle \equiv \\
    \langle\lambda,\lambda+2\rho\rangle+\langle(d-n)\varpi_i + r\varpi_n,(d-n)\varpi_i + r\varpi_n+2\rho\rangle\ [2d].
  \end{multline*}
  Going back to the definition of $\rho$, we find that $X$ is transparent if and only of for all $\lambda\in C_{n,d}$ we have $(r-i)\langle\lambda,\varpi_n\rangle\equiv 0 \ [d]$. If $n=d$ there is no condition on $r$ and $i$ and then every object is transparent. If $d>d$, taking $\lambda=\varpi_1$, we see that $r\equiv i\ [d]$ so that the only transparent objects in $\mathcal{C}_\xi$ are the $X((d-n)\varpi_i+r\varpi_n)$ with $r\equiv i \ [d]$.
  Finally, we need to check that $X((d-n)\varpi_i+r\varpi_n)$ is a tensor power of $X((d-n)\varpi_1+\varpi_n)$. We remark that $X((d-n)\varpi_1+\varpi_n)^{\otimes n}\simeq \det_q^{\otimes d}$, so that, by tensoring to a suitable power of $\det_q^{\otimes d}$, we may and will suppose that $0 \leq r < d$. Then $i=r$ and $X((d-n)\varpi_i+i\varpi_n)\simeq X((d-n)\varpi_1+\varpi_n)^{\otimes i}$.
\end{proof}

If we want to kill the symmetric center by a process of modularization, we have to check that every simple object in the symmetric center is of twist $1$ for the chosen pivotal structure. Let us denote by $\varepsilon$ the object $X((d-n)\varpi_1 + \varpi_n)$ which tensor generates the symmetric center.

\begin{lemma}
  \label{lem:gen_center}
  We endow $\mathcal{C}_\xi$ with the pivotal structure $a_{k,-}$. The quantum dimension and the twist of $\varepsilon$ are respectively $(-1)^{k+n-1}$ and $(-1)^{d+k}$.
\end{lemma}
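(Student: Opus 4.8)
The plan is to establish the two assertions separately: the value of the twist follows by substituting $\lambda = (d-n)\varpi_1+\varpi_n$ into the twist formula of \cref{prop:S-mat-gln}, while the value of the quantum dimension is obtained by comparing the pivotal structure $a_{k,-}$ with the spherical structure $a_{1-n,-}$.

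For the twist, \cref{prop:S-mat-gln} gives $\theta_{k,\varepsilon} = \xi^{\langle\lambda,\lambda+2\rho+k\varpi_n\rangle}$ with $\lambda = (d-n)\varpi_1+\varpi_n$. Writing this weight in the standard coordinates as $\lambda = (d-n+1,1,\dots,1)$ and using $2\rho = (2(n-1),2(n-2),\dots,2,0)$, one computes $\langle\lambda,\lambda\rangle = (d-n+1)^2+(n-1)$, $\langle\lambda,2\rho\rangle = (n-1)(2d-n)$, and $\langle\lambda,k\varpi_n\rangle = kd$ (the last because $\langle\lambda,\varpi_n\rangle = \sum_i\lambda_i = d$, which is also the $\mathbb{Z}$-degree of $\varepsilon$). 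Summing these three contributions, the terms involving $n$ cancel and one is left with $\langle\lambda,\lambda+2\rho+k\varpi_n\rangle = d(d+k)$. Since $\xi = \exp(i\pi/d)$ satisfies $\xi^d = -1$, this yields $\theta_{k,\varepsilon} = \xi^{d(d+k)} = (-1)^{d+k}$.

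For the quantum dimension, I would first record how $\dim_k$ varies with $k$. Using the identity $\Tr_k(f) = \Tr(L_{k\varpi_n}L_{2\rho}f)$ from the proof of \cref{prop:S-mat-gln}, the fact that $L_{(k+n-1)\varpi_n}$ is central and group-like, and that it acts on the simple object $X(\mu)$ by the scalar $\xi^{(k+n-1)\langle\mu,\varpi_n\rangle}$ (checked on the highest weight vector), one obtains $\dim_k(X(\mu)) = \xi^{(k+n-1)\langle\mu,\varpi_n\rangle}\dim_{1-n}(X(\mu))$. Taking $\mu = \lambda$ and using $\langle\lambda,\varpi_n\rangle = d$, this gives $\dim_k(\varepsilon) = \xi^{(k+n-1)d}\dim_{1-n}(\varepsilon) = (-1)^{k+n-1}\dim_{1-n}(\varepsilon)$. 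It then remains to see that $\dim_{1-n}(\varepsilon) = 1$: the structure $a_{1-n,-}$ is spherical, and as used in the proof of \cref{prop:symmetric_center} every simple object of $\mathcal{C}_\xi$ has positive real quantum dimension with respect to it; since $\varepsilon$ is invertible (it lies in the symmetric center, which is pointed by \cref{prop:symmetric_center}), multiplicativity of the quantum dimension together with $\varepsilon\otimes\varepsilon^*\simeq\mathbf{1}$ gives $\dim_{1-n}(\varepsilon)\dim_{1-n}(\varepsilon^*)=1$, and sphericality gives $\dim_{1-n}(\varepsilon^*)=\dim_{1-n}(\varepsilon)$; hence $\dim_{1-n}(\varepsilon)^2 = 1$ and, by positivity, $\dim_{1-n}(\varepsilon) = 1$.

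Apart from this last point, everything is elementary bookkeeping. The one genuine input is the positivity of the quantum dimensions for the spherical structure $a_{1-n,-}$, which is what pins $\dim_{1-n}(\varepsilon)$ down to be exactly $1$ rather than merely a sign. I expect the coordinate expansion in the twist computation to be the most error-prone step, but there is no real conceptual obstacle.
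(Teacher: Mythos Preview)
Your proof is correct and follows essentially the same approach as the paper's: both compute the twist by evaluating the inner product in \cref{prop:S-mat-gln} (the paper states the congruence $\equiv d(d+k)\ [2d]$ while you obtain the exact value $d(d+k)$), and both deduce the quantum dimension by comparing $a_{k,-}$ with the spherical structure $a_{1-n,-}$ and using positivity of quantum dimensions for the latter to force $\dim_{1-n}(\varepsilon)=1$. Your write-up is simply more explicit in the coordinate computations and in justifying why invertibility plus positivity gives $\dim_{1-n}(\varepsilon)=1$.
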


\begin{proof}
  Since $\varepsilon$ is an invertible object, the quantum dimension of $\varepsilon$ with respect to the spherical structure $a_{1-n,-}$ is $1$ since the quantum dimension of a simple object is positive with respect to this spherical structure. Therefore the quantum dimension of $\varepsilon$ with respect to the pivotal structure $a_{k,-}$ is $\xi^{\langle(d-n)\varpi_1+\varpi_n,(k+n-1)\varpi_n\rangle}=(-1)^{k+n-1}$.

  Concerning the twist, we only have to check that
  \[
    \langle (d-n)\varpi_1+\varpi_n,(d-n)\varpi_1+\varpi_n+2\rho+k\varpi_n\rangle \equiv d(k+d)\ [2d],
  \]
  which is an easy computation.
\end{proof}

Since we want the twist of objects in the symmetric center to be equal to $1$, we will always choose from now on a pivotal structure of the form $a_{2p+d,-}$ with $p\in \mathbb{Z}$. The object $\varepsilon^{\otimes 2}$ is always of quantum dimension and twist equal to $1$ so that we can apply the modularization procedure of Bruguières and Müger. We then obtain a fusion category $\mathcal{D}_\xi$ with symmetric center generated by the image of $\varepsilon$. Tensoring by $\varepsilon^{\otimes 2}$ does not have fixed points on the set of simple objects since this object sits in non-trivial degree for the $\mathbb{Z}$-grading. Thus the modularization procedure only add isomorphism between some simple objects and the image in $\mathcal{D}_\xi$ of every simple object in $\mathcal{C}_{\xi}$ is still a simple object.

The structure of the symmetric center depends now on the parity of $n+d$:
\begin{itemize}
\item if $n\not\equiv d\ [2]$ then the symmetric center of $\mathcal{D}_\xi$ is equivalent, as a pivotal category to $\Rep(\mathbb{Z}/2\mathbb{Z})$ and $\mathcal{D}_\xi$ is then modularizable. We then obtain a non-degenerate fusion category $\tilde{\mathcal{D}}_\xi$ from the modularization of $\mathcal{D}_\xi$.
\item if $n\equiv d\ [2]$ then the symmetric center of $\mathcal{D}_\xi$ is equivalent, as a pivotal category to $\sVect$ and $\mathcal{D}_\xi$ is then slightly degenerate. We then obtain a non-degenerate superfusion category $\tilde{\mathcal{D}}_\xi$ from the supermodularization of $\mathcal{D}_\xi.$
\end{itemize}

The simple objects of $\tilde{D}_\xi$ are then parameterized by the orbits of $\Irr(\mathcal{C}_\xi)$ under tensorization by $\varepsilon$.

\subsection{Modular data arising from $\tilde{D}_\xi$}
\label{sec:mod_data_gln}

We turn to the computation of the modular invariants of the category $\tilde{\mathcal{D}}_\xi$. First, we determine a suitable subset of $C_{n,d}$ for the pasteurization of simple objects of $\tilde{\mathcal{C}_\xi}$. This amounts to choose a representative of each orbit of isomorphism classes of simple objects of $\mathbb{C}_\xi$ under tensorization by $\varepsilon$. We say that two weights $\lambda\sim_{\elem}\mu$ in $C_{n,d}$  if $\lambda=\sh(\mu)+(d-n)\varpi_1+\varpi_n$ or $\mu=\sh(\lambda)+(d-n)\varpi_1+\varpi_n$. We then define an equivalence relation $\sim$ on $C_{n,d}$ as the reflexive and transitive closure of $\sim_{\elem}$. It is almost immediate to see that if $\lambda\sim \mu$ then there exists $0 <q k \leq n$ and $r \in \mathbb{Z}$ such that $\mu=\sh^k(\lambda)+(d-n)\varpi_k +(rd+k)\varpi_n$. Note that $\lambda\sim\mu$ if and only if $X(\lambda)\simeq X(\mu)$ in $\tilde{\mathcal{D}}_\xi$.

\begin{lemma}
  \label{lem:equiv-weights}
  Let $\lambda\in C_{n,d}$. Then there exists a unique $\lambda'\in C_{n,d}$ such that $\lambda'\sim \lambda$ and $d-n\geq \lambda'_1\geq\cdots\geq \lambda'_n\geq 0$.
\end{lemma}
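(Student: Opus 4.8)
The plan is to produce the representative $\lambda'$ by a greedy/iterative procedure along the elementary moves $\sim_{\elem}$, and then to argue uniqueness by a counting argument showing that each equivalence class meets the ``bottom box'' $B = \{\mu \in C_{n,d} \mid d-n \geq \mu_1 \geq \cdots \geq \mu_n \geq 0\}$ in exactly one point. First I would set up the relevant quantity to decrease: write $\lambda = \sum_i \lambda_i \varepsilon_i$ with $\lambda_1 \geq \cdots \geq \lambda_n$, so that membership in $C_{n,d}$ says $\lambda_1 - \lambda_n \leq d-n$, and membership in $B$ additionally requires $\lambda_n \geq 0$ and $\lambda_1 \leq d-n$. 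Since $\det_\xi^{\otimes d} \simeq \varepsilon^{\otimes n}$ in $\tilde{\mathcal D}_\xi$ (this is recorded in the proof of \cref{prop:symmetric_center}), tensoring by powers of $\varepsilon$ lets us freely translate a weight by $d\varpi_n = d(\varepsilon_1 + \cdots + \varepsilon_n)$; using this I may normalize so that $0 \leq \lambda_n < d$, or even so that $0 \leq \lambda_n \leq d-n$ once the argument below forces $\lambda_n$ into that range.

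For existence I would analyze the effect of a single move $\mu = \sh(\lambda) + (d-n)\varpi_1 + \varpi_n$ on the coordinates: $\sh(\lambda)$ cyclically rotates $(\lambda_1,\ldots,\lambda_n)$ to $(\lambda_n,\lambda_1,\ldots,\lambda_{n-1})$, then one adds $(d-n)$ to the first coordinate and $1$ to all of them, giving $\mu = (\lambda_n + d - n + 1,\ \lambda_1 + 1,\ \ldots,\ \lambda_{n-1}+1)$ — which is again weakly decreasing precisely because $\lambda_n + (d-n) \geq \lambda_1$ (i.e. $\lambda \in C_{n,d}$), and lies in $C_{n,d}$ since its spread is $\lambda_1 - \lambda_n + (d-n) - (d-n) \cdot[\ldots]$; one checks $\mu_1 - \mu_n = (\lambda_n + d - n + 1) - (\lambda_{n-1}+1) = \lambda_n - \lambda_{n-1} + d - n \leq d-n$. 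The key observation is that this move sends $\lambda_1$ down to $\lambda_{n-1}+1$ while the new maximum is $\lambda_n + d - n + 1$; iterating, the orbit under $\sim$ together with the $d\varpi_n$-translations contains, by a descent argument on a suitable potential function (for instance $\sum_i (\lambda_i \bmod d)$ after fixing the $d\varpi_n$-translate appropriately, or simply the largest coordinate reduced mod the available shifts), an element all of whose coordinates lie in $[0, d-n]$; that element is the desired $\lambda'$. Concretely one shows: starting from any $\lambda$, first translate by a multiple of $d\varpi_n$ so $0 \le \lambda_n < d$; if $\lambda_n \le d-n$ and $\lambda_1 \le d-n$ we are done; otherwise apply the move, which strictly decreases $\lambda_1 - \lambda_n$ unless $\lambda_{n-1} = \lambda_n$, and in the equality cases a short separate analysis (collapsing blocks of equal coordinates) still produces termination.

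For uniqueness I would count. The set $B$ has exactly $\binom{d}{n}$ elements: weakly decreasing sequences $d-n \geq \mu_1 \geq \cdots \geq \mu_n \geq 0$ are in bijection (via $\mu_i \mapsto \mu_i + n - i$) with strictly decreasing sequences in $\{0,1,\ldots,d-1\}$ of length $n$, i.e. with $n$-subsets of a $d$-set. On the other hand, the number of $\sim$-equivalence classes (equivalently, the number of isomorphism classes of simple objects of $\tilde{\mathcal D}_\xi$ in a fixed degree range, after also quotienting by $\det_\xi^{\otimes d}$) can be computed independently and also equals $\binom{d}{n}$ — this will match the size of the index set of the modular datum attached to $\tilde{\mathcal D}_\xi$, which is the point of the whole construction. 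Thus the surjection ``$B \twoheadrightarrow$ (equivalence classes)'' furnished by the existence part is a surjection between finite sets of equal cardinality, hence a bijection, giving uniqueness. I expect the main obstacle to be the termination/normal-form bookkeeping in the existence step: the moves $\sim_{\elem}$ interact with the $d\varpi_n$-translations, and one must choose the right combination of ``reduce mod $d$'' and ``apply $\sh$'' steps and exhibit an honest strictly-decreasing potential (handling the degenerate cases where several coordinates coincide), rather than the uniqueness step, which is essentially the clean counting argument above.
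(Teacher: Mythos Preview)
Your existence argument is too vague to be a proof: you never actually exhibit a potential that strictly decreases, and the ``short separate analysis'' for equal coordinates is promised but not given. The paper avoids all of this by a single, explicit shift: after translating by a multiple of $d\varpi_n$ so that $d-n \geq \lambda_1 > -n$, either $\lambda_n \geq 0$ and we are already in the box, or there is a minimal index $k\geq 2$ with $\lambda_k \leq k-n-1$, and then applying $\sh^{\,n+1-k}(\cdot) + (d-n)\varpi_{n+1-k} + (n+1-k)\varpi_n$ lands in the box in one step. No iteration or potential is needed.

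Your uniqueness argument, as written, is circular. You say the number of $\sim$-classes ``can be computed independently and also equals $\binom{d}{n}$,'' but the only justification you give is that it ``will match the size of the index set of the modular datum attached to $\tilde{\mathcal D}_\xi$.'' In the paper, however, that rank is \emph{deduced from} this very lemma (see the sentence immediately following its proof), so you cannot invoke it here. A genuine independent count is possible: the set $\{\lambda\in C_{n,d}\mid 0\leq \lambda_n < d\}$ has $d\binom{d-1}{n-1}=n\binom{d}{n}$ elements, and the residual $\mathbb{Z}/n\mathbb{Z}$-action by $\varepsilon$ is free because $\varepsilon$ sits in degree $d$ for the $\mathbb{Z}/dn\mathbb{Z}$-grading; dividing gives $\binom{d}{n}$. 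If you insert that computation, your counting approach becomes valid and is a legitimate alternative to the paper's route, which instead proves uniqueness directly: given $\lambda,\mu$ both in the box with $\mu = \sh^k(\lambda)+(d-n)\varpi_k+(rd+k)\varpi_n$, the bounds $|\lambda_i-\mu_j|\leq d-n$ force $k=n$ and $r=-1$, hence $\lambda=\mu$.
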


\begin{proof}
  First notice that for every $\lambda\in C_{n,d}$ we have $\lambda\sim \lambda+d\varpi_n$ since $\varepsilon^{\otimes n}\simeq \det_\xi^{\otimes d}$.

  Now fix $\lambda\in C_{n,d}$. By adding or subtracting a multiple of $d\varpi_n$, we may and will suppose that $d-n \geq \lambda_1 > -n$. If moreover $\lambda_n\geq 0$, we have nothing to prove. Otherwise, let $1\leq k \leq n$ be minimal such that $\lambda_k\leq k-n-1$. Since $\lambda_n < 0$ the integer $k$ is well defined, and since $\lambda_1>-n$, we have $k\geq 2$. Let $\lambda'=\sh^{n+1-k}(\lambda)+(d-n)\varpi_{n+1-k}+(n+1-k)\varpi_n$. Then $\lambda'\sim \lambda$ and we have $\lambda'_1 = \lambda_k + d+1-k \leq d-n$ be definition of $k$. Finally, $\lambda'_n = \lambda_{k-1}+n+1-k \geq 0$ again by definition of $k$.

  \medskip

  For the uniqueness, suppose that $\lambda\sim\mu$ with $0\leq \lambda_i \leq d-n$ and $0\leq \mu_i \leq d-n$ for all $1 \leq i \leq n$. If $d=n$, there is nothing to do since $\lambda=\mu=0$. Therefore, we suppose $d>n$. Since $\lambda\sim \mu$ we have $-(d-n) \leq \lambda_i-\mu_j \leq d-n$ for all $1 \leq i,j \leq n$. There also exists $0 < k \leq n$ and $r \in \mathbb{Z}$ such that $\mu=\sh^k(\lambda)+(d-n)\varpi_k + (rd+k)\varpi_n$. We want to show that $k=n$ and $r=-1$.
  
  Suppose that $k<n$. Therefore, by choosing suitably $i$ and $j$ we have $-(d-n) \leq rd+k \leq d-n$ and $-(d-n) \leq rd+k+d-n \leq d-n$. This implies that $-(d-n) \leq rd+k \leq 0$ and $0 \leq rd+k+d-n < d$. But as $0 \leq k+d-n < d$ we obtain that $r=0$ and then $k \leq 0$, which is a contradiction.

  Then $k=n$ and $\lambda = \mu + (r+1)d\varpi_n$. As $\lvert \lambda_i - \mu_i \rvert < d$, we necessarily have $r=-1$, which ends the prove of uniqueness.
\end{proof}

Let $\tilde{C}_{n,d}$ be the subset of $C_{n,d}$ consisting of dominant integral weights $\lambda$ satisfying $d-n\geq\lambda_1\geq\cdots\geq\lambda_n\geq 0$. We will therefore use the set $\tilde{C}_{n,d}$ to parameterize simple objects in $\tilde{\mathcal{D}}_\xi$. This also shows that the rank of $\tilde{\mathcal{D}}_\xi$ is equal to $\binom{d}{n}$.

\begin{lemma}
  \label{lem:cat-dimension}
  The categorical dimension of $\tilde{\mathcal{D}}_\xi$ is
  \[
    \dim(\tilde{\mathcal{D}}_\xi) = (-1)^{n(n-1)/2}\frac{d^n}{\left(\prod_{i=1}^{n-1}(\xi^i-\xi^{-i})^{n-i}\right)^2}.
  \]
\end{lemma}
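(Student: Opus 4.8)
The plan is to evaluate $\dim(\tilde{\mathcal{D}}_\xi)=\sum_{X\in\Irr(\tilde{\mathcal{D}}_\xi)}\lvert\dim(X)\rvert^{2}$ directly; the point is that this sum collapses, after a Vandermonde manipulation, to a Cauchy--Binet identity over the group $\mu_d$ of $d$-th roots of unity. First I would pin down the individual dimensions. For any of the pivotal structures $a_{k,-}$ the quantum dimension of $X(\lambda)$ equals a power of $\xi$ times the quantum Weyl dimension $\prod_{1\leq i<j\leq n}\frac{[\ell_i-\ell_j]_\xi}{[j-i]_\xi}$, where $\ell_i:=\lambda_i+n-i$ and $[m]_\xi:=(\xi^{m}-\xi^{-m})/(\xi-\xi^{-1})$; this is read off from \cref{prop:S-mat-gln} (using $\dim(X)=S_{X,\mathbf{1}}$, recognizing the ratio of alternating sums as a principal specialization of a Schur polynomial, and allowing for the passage between left and right quantum traces). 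Since $\xi$ is a root of unity, since the (super)modularization functor preserves quantum dimensions, and since by \cref{lem:equiv-weights} the simple objects of $\tilde{\mathcal{D}}_\xi$ are the images of the $X(\lambda)$, $\lambda\in\tilde{C}_{n,d}$, we obtain $\lvert\dim(X(\lambda))\rvert^{2}=\prod_{1\leq i<j\leq n}[\ell_i-\ell_j]_\xi^{2}/[j-i]_\xi^{2}$ for every such $\lambda$. Observe that $\lambda\mapsto\{\ell_1,\dots,\ell_n\}$ is a bijection from $\tilde{C}_{n,d}$ onto the $n$-element subsets of $\{0,1,\dots,d-1\}$, and that $0<\ell_i-\ell_j<d$ for $i<j$, so each $[\ell_i-\ell_j]_\xi=\sin((\ell_i-\ell_j)\pi/d)/\sin(\pi/d)$ is a positive real.

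Next I would reduce the sum to a sum of squared Vandermonde determinants. Writing $S=\{\ell_1,\dots,\ell_n\}$ and using $\xi^{s'-s}-\xi^{-(s'-s)}=\xi^{-(s+s')}(\xi^{2s'}-\xi^{2s})$ to extract a unit scalar, one gets
\[
  \left(\prod_{i<j}[\ell_i-\ell_j]_\xi\right)^{2}=\frac{1}{\lvert\xi-\xi^{-1}\rvert^{n(n-1)}}\left\lvert\prod_{\substack{s,s'\in S\\ s<s'}}(\xi^{2s'}-\xi^{2s})\right\rvert^{2}=\frac{\lvert\Delta(S)\rvert^{2}}{\lvert\xi-\xi^{-1}\rvert^{n(n-1)}},
\]
where $\Delta(S)$ denotes the Vandermonde determinant of the points $(\xi^{2s})_{s\in S}$. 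Since $\xi^{2}$ is a primitive $d$-th root of unity, the map $s\mapsto\xi^{2s}$ identifies $\{0,\dots,d-1\}$ with $\mu_d$, so summing the previous display over $\lambda\in\tilde{C}_{n,d}$ amounts to summing $\lvert\Delta(T)\rvert^{2}$ over all $n$-element subsets $T\subseteq\mu_d$.

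Finally, let $A$ be the $d\times n$ matrix with entries $A_{\zeta,j}=\zeta^{j-1}$ for $\zeta\in\mu_d$ and $1\leq j\leq n$. Cauchy--Binet yields $\sum_{\lvert T\rvert=n}\lvert\Delta(T)\rvert^{2}=\det(A^{*}A)$, and $(A^{*}A)_{j,k}=\sum_{\zeta\in\mu_d}\zeta^{k-j}=d\,\delta_{j,k}$ by orthogonality of characters (valid because $\lvert k-j\rvert<d$), so $\det(A^{*}A)=d^{n}$. Collecting the pieces,
\[
  \dim(\tilde{\mathcal{D}}_\xi)=\frac{1}{\left(\prod_{k=1}^{n-1}[k]_\xi^{n-k}\right)^{2}}\cdot\frac{d^{n}}{\lvert\xi-\xi^{-1}\rvert^{n(n-1)}},
\]
and substituting $[k]_\xi=(\xi^{k}-\xi^{-k})/(\xi-\xi^{-1})$ together with $\lvert\xi-\xi^{-1}\rvert^{n(n-1)}=(-1)^{n(n-1)/2}(\xi-\xi^{-1})^{n(n-1)}$ (because $\overline{\xi-\xi^{-1}}=-(\xi-\xi^{-1})$) makes all powers of $\xi-\xi^{-1}$ cancel, leaving exactly the asserted formula. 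I expect the only genuine work to be the bookkeeping of the first two steps — the principal-specialization identity for the quantum dimension, the extraction of the unit scalar so that the product reassembles into $\Delta(S)$, and the reindexing from subsets of $\{0,\dots,d-1\}$ to subsets of $\mu_d$; the identity $\sum_{\lvert T\rvert=n}\lvert\Delta(T)\rvert^{2}=d^{n}$ and the final cancellation of $q$-numbers are then routine.
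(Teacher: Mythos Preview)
Your argument is correct and takes a genuinely different route from the paper. The paper does not sum over $\tilde{C}_{n,d}$ directly: instead it observes that $\lvert\dim X(\lambda)\rvert^{2}$ is unchanged by tensoring with $\moddet{\xi}$, identifies the slice $\{\lambda_n=0\}$ with the alcove for $\mathfrak{sl}_n$, and imports the already-known value $\sum_{\lambda_n=0}\lvert X(\lambda)\rvert^{2}=(-1)^{n(n-1)/2}\,n d^{n-1}/(\prod_i(\xi^i-\xi^{-i})^{n-i})^{2}$ from \cite{bakalov-kirillov}. It then passes to an auxiliary finite category $\mathcal{C}'_\xi$ obtained by inverting $\moddet{\xi}^{2d}$ (so that $\dim\mathcal{C}'_\xi$ is $2d$ times the $\mathfrak{sl}_n$ sum) and divides by the order $2n$ of $\varepsilon$ using the grading. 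Your proof, by contrast, is entirely self-contained: you recognise the bijection $\lambda\mapsto\{\ell_1,\dots,\ell_n\}$ onto $\binom{\{0,\dots,d-1\}}{n}$, rewrite each $\lvert\dim X(\lambda)\rvert^{2}$ as a squared Vandermonde at the points $\xi^{2\ell_i}$, and then evaluate the resulting sum $\sum_{\lvert T\rvert=n}\lvert\Delta(T)\rvert^{2}=d^n$ by Cauchy--Binet and orthogonality of characters of $\mu_d$. The paper's approach is more structural and explains the factorisation $d^n=n d^{n-1}\cdot 2d/2n$ in terms of how $\tilde{\mathcal{D}}_\xi$ is built from the $\mathfrak{sl}_n$ fusion category; your approach avoids the external reference and yields the closed form in one stroke, at the cost of a small amount of determinantal bookkeeping.
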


\begin{proof}
  Since the simple object $\moddet{\xi}$ is invertible, tensoring by $\moddet{\xi}$ does not change the square norm $\lvert X \rvert^2$ of a simple object $X$.
  Note that we have a bijection between the simple objects of the fusion category associated with $\mathfrak{sl}_n$ and the set of $\lambda\in C_{n,d}$ with $\lambda_n=0$. Moreover, the squared norm of the object $X(\lambda)$ is the same if we restrict to $\mathfrak{sl}_n$. Therefore, by \cite[Theorem 3.3.20]{bakalov-kirillov}
  \[
    \sum_{\substack{\lambda\in C_{n,d}\\\lambda_n=0}}\lvert X(\lambda)\rvert ^2 = (-1)^{n(n-1)/2}\frac{nd^{n-1}}{\left(\prod_{i=1}^{n-1}(\xi^i-\xi^{-i})^{n-i}\right)^2}.
  \]
  Consider now the category $\mathcal{C}'_\xi$ obtained from $\mathcal{C}_\xi$ by adding isomorphisms between powers of $\moddet{\xi}^{2d}$. Its objects can be parameterized by $\{\lambda\in C_{n,d}\ \vert\ 0\leq \lambda_n < 2d\}$, so that its categorical dimension is equal to
  \[
    \dim(\mathcal{C}'_\xi)=2d \sum_{\substack{\lambda\in C_{n,d}\\\lambda_n=0}}\lvert X(\lambda)\rvert ^2 = (-1)^{n(n-1)/2}\frac{2nd^n}{\left(\prod_{i=1}^{n-1}(\xi^i-\xi^{-i})^{n-i}\right)^2}.
  \]
  Note that $\mathcal{C}_\xi'$ inherits of a $\mathbb{Z}/2dn\mathbb{Z}$-grading since $\det_\xi^{\otimes 2d}$ sits in degree $2dn$. As $\varepsilon$ is of order $2n$ in $\mathcal{C}'_\xi$ and none of its non-trivial tensor power is of degree $0$, we find that the (super)dimension of $\tilde{\mathcal{D}}_\xi$ is equal to $\dim(\mathcal{C}'_\xi)/2n$, which ends the proof.
\end{proof}

Since the category $\tilde{\mathcal{D}}_\xi$ is pivotal but not necessarily not spherical, we need to determine the invertible object $\bar{\mathbf{1}}$ among the $X(\lambda)$, $\lambda\in \tilde{C}_{n,d}$ and its quantum dimension.

\begin{proposition}
  \label{prop:1-bar}
  Let $p\in \mathbb{Z}$ and we equip $\tilde{\mathcal{D}}_\xi$ with the pivotal structure $a_{2p+d,-}$. The object $\bar{\mathbf{1}}$ is isomorphic to $\moddet{\xi}^{-(2p+n-1)}$.
\end{proposition}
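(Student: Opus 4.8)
\emph{Strategy.} The plan is to recognise $\bar{\mathbf{1}}$ through its defining property (recalled in \cref{sec:slightly-deg} following \cite{slightly-deg}): it is the invertible object of $\tilde{\mathcal{D}}_\xi$, unique up to isomorphism, for which $S_{\bar{\mathbf{1}},Y}=\dim(\bar{\mathbf{1}})\dim(Y^{*})$ for every simple object $Y$, the invariants $S$ and $\dim$ being those attached to the chosen pivotal structure $a_{2p+d,-}$. Put $k:=2p+d$. The simple objects of $\tilde{\mathcal{D}}_\xi$ are the images of the $X(\mu)$ with $\mu\in C_{n,d}$, and both the $S$-matrix entries and the quantum dimensions of these images can be read off from those computed inside $\mathcal{C}_\xi$ via \cref{prop:S-mat-gln}; so it is enough to exhibit an integer $r$ with
\[
  S_{X(r\varpi_n),X(\mu)}=\dim_k\bigl(X(r\varpi_n)\bigr)\,\dim_k\bigl(X(\mu)^{*}\bigr)\qquad\text{for every }\mu\in C_{n,d},
\]
and then to check that any such $r$ satisfies $r\equiv-(2p+n-1)\ [d]$; since $\moddet{\xi}^{\otimes d}\simeq\varepsilon^{\otimes n}\simeq\mathbf{1}$ in $\tilde{\mathcal{D}}_\xi$, this will give $\bar{\mathbf{1}}\simeq X(r\varpi_n)\simeq\moddet{\xi}^{-(2p+n-1)}$.

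\emph{The two sides.} For the left-hand side I would use \cref{prop:S-mat-gln}. The key point is that $\varpi_n$ is $W$-invariant, so $w(r\varpi_n+\rho)=r\varpi_n+w(\rho)$ and the alternating sum in the numerator factors as $\xi^{2r\langle\varpi_n,\mu+\rho\rangle}$ times the one for $\lambda=0$. Using $\langle\varpi_n,\varpi_n\rangle=n$, $\langle\varpi_n,\rho\rangle=n(n-1)/2$, and $S_{X(0),X(\mu)}=\dim_k(X(\mu))$, this yields
\[
  S_{X(r\varpi_n),X(\mu)}=\xi^{\,rn(k+n-1)+2r\langle\mu,\varpi_n\rangle}\,\dim_k\bigl(X(\mu)\bigr).
\]
For the right-hand side, $X(r\varpi_n)$ is invertible, hence of quantum dimension $1$ for the spherical structure $a_{1-n,-}$, and passing from $a_{1-n,-}$ to $a_{k,-}$ multiplies the quantum dimension of a simple $X(\nu)$ by $\xi^{(k+n-1)\langle\nu,\varpi_n\rangle}$ (the computation in the proof of \cref{lem:gen_center}); thus $\dim_k(X(r\varpi_n))=\xi^{\,rn(k+n-1)}$. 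Applying the same change-of-pivotal-structure computation to $X(\mu)^{*}\simeq X(-w_0\mu)$, together with $w_0\varpi_n=\varpi_n$ and the sphericity of $a_{1-n,-}$ (which gives $\dim_{1-n}(X(\mu)^{*})=\dim_{1-n}(X(\mu))$), one obtains $\dim_k(X(\mu)^{*})=\xi^{-2(k+n-1)\langle\mu,\varpi_n\rangle}\dim_k(X(\mu))$.

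\emph{Conclusion.} Substituting and cancelling the common nonzero factor $\xi^{rn(k+n-1)}\dim_k(X(\mu))$, the required identity collapses to $\xi^{2r\langle\mu,\varpi_n\rangle}=\xi^{-2(k+n-1)\langle\mu,\varpi_n\rangle}$ for all $\mu\in C_{n,d}$. If $d=n$ the statement is vacuous ($\tilde{\mathcal{D}}_\xi$ has a single simple object), so assume $d>n$; taking $\mu=\varpi_1$, for which $\langle\mu,\varpi_n\rangle=1$, and using that $\xi$ has order $2d$, this is equivalent to $r\equiv-(k+n-1)\ [d]$, and conversely this congruence forces $2r\equiv-2(k+n-1)\ [2d]$ and hence the identity for every $\mu$. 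With $k=2p+d$ the congruence reads $r\equiv-(2p+n-1)\ [d]$, so $X(r\varpi_n)\simeq\moddet{\xi}^{-(2p+n-1)}$ in $\tilde{\mathcal{D}}_\xi$, and the uniqueness of $\bar{\mathbf{1}}$ completes the proof.

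\emph{Main obstacle.} The arithmetic with exponents of $\xi$ is routine; the one genuinely delicate ingredient is that $a_{2p+d,-}$ is not spherical for general $p$, so $\dim_k(X(\mu)^{*})\neq\dim_k(X(\mu))$, the discrepancy being the $\mu$-dependent scalar $\xi^{-2(k+n-1)\langle\mu,\varpi_n\rangle}$. For $X(r\varpi_n)$ to be $\bar{\mathbf{1}}$ this discrepancy must be cancelled exactly by the monodromy contribution $\xi^{2r\langle\mu,\varpi_n\rangle}$ of the double braiding of $\moddet{\xi}^{\otimes r}$ with $X(\mu)$, and it is this matching — not the $\mu$-independent part of the computation — that determines $r$ modulo $d$.
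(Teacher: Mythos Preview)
Your proof is correct and follows essentially the same approach as the paper: both compute $S_{\moddet{\xi}^{\,r},X(\mu)}$ and $\dim_k(X(\mu)^*)$ and match the defining condition of $\bar{\mathbf{1}}$, the only cosmetic difference being that you solve for $r$ and reduce $\dim_k(X(\mu)^*)$ to $\dim_k(X(\mu))$ via the sphericity of $a_{1-n,-}$, whereas the paper plugs in $r=-(2p+n-1)$ and expands $\dim_k(X(-w_0\mu))$ directly from the Weyl character formula using $w_0(\rho)=-\rho+(n-1)\varpi_n$.
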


\begin{proof}
  The object $\bar{\mathbf{1}}$ is determined by $\dim_{2p+d}(X(\lambda)^*)\dim_{2p+d}(\bar{\mathbf{1}}) = S_{\bar{\mathbf{1}},X(\lambda)}$ for all $\lambda\in C_{n,d}$. We first compute the quantum dimension of $X(\lambda)^*$ by using the fact that $X(\lambda)^*\simeq X(-w_0(\lambda))$
  \[
    \dim_{2p+d}(X(\lambda)^*) = \xi^{-\langle w_0(\lambda),(2p+d)\varpi_n\rangle}\frac{\sum_{w\in W}(-1)^{l(w)}\xi^{2\langle w(-w_0(\lambda)+\rho),\rho\rangle}}{\sum_{w\in W}(-1)^{l(w)}\xi^{2\langle w(\rho),\rho\rangle}}.
  \]

  Now we use that $w_0(\rho) = -\rho + (n-1)\varpi_n$  and the fact that $\varpi_n$ is invariant under the action of $W$ to obtain
  \[
    \langle w(-w_0(\lambda)+\rho),\rho\rangle = \langle ww_0(\lambda+\rho),w_0(\rho)\rangle - \langle \lambda,(n-1)\varpi_n\rangle.
  \]
  Therefore, by a change of variables in the sum at the numerator, we obtain
  \[
    \dim_{2p+d}(X(\lambda)^*) = \xi^{-\langle\lambda,(2p+d+2(n-1))\varpi_n\rangle}\frac{\sum_{w\in W}(-1)^{l(w)}\xi^{2\langle w(\lambda+\rho),\rho\rangle}}{\sum_{w\in W}(-1)^{l(w)}\xi^{2\langle w(\rho),\rho\rangle}}.
  \]
  
  It is now an easy calculation to show that
  \begin{align*}
    \frac{S_{\moddet{\xi}^{-(2p+n-1)},X(\lambda)}}{\dim_{2p+d}\left(\moddet{\xi}^{-(2p+n-1)}\right)}
    &= \xi^{-\langle\lambda,(2p+d+2(n-1))\varpi_n\rangle}\frac{\sum_{w\in W}(-1)^{l(w)}\xi^{2\langle w(\lambda+\rho),\rho\rangle}}{\sum_{w\in W}(-1)^{l(w)}\xi^{2\langle w(\rho),\rho\rangle}}\\
    &= \dim_{2p+d}(X(\lambda)^*),
  \end{align*}
  which ends the proof.
\end{proof}

The unique $\lambda\in \tilde{C}_{n,d}$ such that $-(2p+d+n-1)\varpi_n \sim \lambda$ is subtle to determine, but we only need the value of its quantum dimension, which is then $\pm\xi^{-2\langle (2p+n-1)\varpi_n,\rho+p\varpi_n\rangle}$.

\begin{theorem}
  \label{thm:modular_data_gln}
  We endow $\tilde{\mathcal{D}}_\xi$ with the pivotal structure $a_{2p+d,-}$. There exists a fourth root of unity $\omega$ such that the renormalized $S$-matrix of $\tilde{\mathcal{D}}_\xi$ is given by
  \[
    \mathbb{S}_{X(\lambda),X(\mu)} = \omega\xi^{\langle\lambda+\mu,(2p+d)\varpi_n\rangle+2pn(p+n-1)}\frac{\sum_{w\in W}(-1)^{l(w)}\xi^{2\langle w(\lambda+\rho),\mu+\rho\rangle}}{\sqrt{d}^n},
  \]
  for any $\lambda,\mu\in \tilde{C}_{n,d}$. The twist of the object $X(\lambda)$ is given by $\theta_{2p+d,X(\lambda)}=\xi^{\langle\lambda,\lambda+2\rho+(2p+d)\varpi_n\rangle}$.
\end{theorem}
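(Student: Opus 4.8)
The plan is to reduce the statement to the $S$-matrix of $\mathcal{C}_\xi$ computed in \cref{prop:S-mat-gln} and then chase roots of unity. I would first recall, from \cite[Theorems 2.22 and 3.7]{slightly-deg}, that the modular datum attached to $\tilde{\mathcal{D}}_\xi$ (which is non-degenerate if $n\not\equiv d\ [2]$ and slightly degenerate if $n\equiv d\ [2]$, by \cref{sec:symmetric_center}) has $S$-matrix obtained by dividing the $S$-matrix by $\sqrt{\dim}\,\sqrt{\dim(\bar{\mathbf{1}})}$. The (super)modularisation functors $\mathcal{C}_\xi\to\mathcal{D}_\xi\to\tilde{\mathcal{D}}_\xi$ are braided pivotal (super)functors, hence preserve braidings, twists and quantum traces; as tensoring by $\varepsilon^{\otimes 2}$ (of quantum dimension $1$) also preserves $S$-matrix entries, on the representatives $\tilde{C}_{n,d}$ of \cref{lem:equiv-weights} the $S$-matrix and the twists of $\tilde{\mathcal{D}}_\xi$ equal those of $\mathcal{C}_\xi$ given by \cref{prop:S-mat-gln} with $k=2p+d$. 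This already settles the twist assertion $\theta_{2p+d,X(\lambda)}=\xi^{\langle\lambda,\lambda+2\rho+(2p+d)\varpi_n\rangle}$; and, since $\tfrac12\dim(\mathcal{D}_\xi)=\dim(\tilde{\mathcal{D}}_\xi)$ in the slightly degenerate case, both regimes yield the single formula
\[
  \mathbb{S}_{X(\lambda),X(\mu)}=\frac{S_{X(\lambda),X(\mu)}}{\sqrt{\dim(\tilde{\mathcal{D}}_\xi)}\,\sqrt{\dim(\bar{\mathbf{1}})}},\qquad \lambda,\mu\in\tilde{C}_{n,d},
\]
with $S$ as in \cref{prop:S-mat-gln}.

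The second step is to identify the Weyl denominator in that formula. Writing $\rho=\rho_0+\tfrac{n-1}{2}\varpi_n$ with $\rho_0$ the half-sum of the positive roots, and using that $\varpi_n$ is $W$-invariant, the classical $\mathfrak{sl}_n$ Weyl denominator identity gives $\sum_{w}(-1)^{l(w)}e^{w\rho}=e^{\frac{n-1}{2}\varpi_n}\prod_{\alpha>0}(e^{\alpha/2}-e^{-\alpha/2})$; specialising $e^{\nu}\mapsto\xi^{2\langle\nu,\rho\rangle}$ and using $\langle\varpi_n,\rho\rangle=\binom{n}{2}$ and $\langle\varepsilon_i-\varepsilon_j,\rho\rangle=j-i$ yields
\[
  \sum_{w\in W}(-1)^{l(w)}\xi^{2\langle w\rho,\rho\rangle}=\xi^{n(n-1)^2/2}\,D,\qquad D:=\prod_{i=1}^{n-1}(\xi^i-\xi^{-i})^{n-i},
\]
so that $S_{X(\lambda),X(\mu)}=\xi^{\langle\lambda+\mu,(2p+d)\varpi_n\rangle-n(n-1)^2/2}\,N(\lambda,\mu)/D$ with $N(\lambda,\mu)=\sum_{w}(-1)^{l(w)}\xi^{2\langle w(\lambda+\rho),\mu+\rho\rangle}$. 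From \cref{lem:cat-dimension} one has $\dim(\tilde{\mathcal{D}}_\xi)\,D^2/d^n=(-1)^{n(n-1)/2}$, whence $\sqrt{\dim(\tilde{\mathcal{D}}_\xi)}=\zeta_1\,\sqrt{d}^{n}/D$ with $\zeta_1^2=(-1)^{n(n-1)/2}$; and from \cref{prop:1-bar} and the ensuing remark, $\dim(\bar{\mathbf{1}})=\pm\xi^{-n(2p+n-1)^2}$ — one checks $2\langle(2p+n-1)\varpi_n,\rho+p\varpi_n\rangle=n(2p+n-1)^2$ using $\langle\varpi_n,\rho\rangle=\binom{n}{2}$ — with $n(2p+n-1)^2$ even, so $\sqrt{\dim(\bar{\mathbf{1}})}=\zeta_2\,\xi^{-n(2p+n-1)^2/2}$, where $\zeta_1,\zeta_2$ are fourth roots of unity.

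Substituting these into the displayed formula, the factors $D$ cancel and what remains is
\[
  \mathbb{S}_{X(\lambda),X(\mu)}=\frac{1}{\zeta_1\zeta_2}\,\xi^{\langle\lambda+\mu,(2p+d)\varpi_n\rangle+\frac{n(2p+n-1)^2-n(n-1)^2}{2}}\,\frac{N(\lambda,\mu)}{\sqrt{d}^{n}},
\]
and the difference-of-squares identity $\tfrac12\bigl(n(2p+n-1)^2-n(n-1)^2\bigr)=\tfrac{n}{2}\cdot 2p\cdot(2p+2n-2)=2pn(p+n-1)$ turns this into the asserted formula, with $\omega=(\zeta_1\zeta_2)^{-1}$ a fourth root of unity. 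The delicate point is the $\mathfrak{gl}_n$-versus-$\mathfrak{sl}_n$ correction $\xi^{n(n-1)^2/2}$ in the Weyl denominator: it comes from the $\varpi_n$-component of $\rho$ and is exactly what promotes the naive $\mathfrak{sl}_n$ exponent to $2pn(p+n-1)$, so omitting it gives the wrong answer. A secondary point is checking that across both modularisation steps and over both regimes the $S$-matrix of $\tilde{\mathcal{D}}_\xi$ on $\tilde{C}_{n,d}$ is genuinely the restriction of the $\mathcal{C}_\xi$-matrix; everything else is bookkeeping of signs and square roots, all of which contribute only fourth roots of unity.
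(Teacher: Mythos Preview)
Your proof is correct and follows essentially the same route as the paper: compute the renormalization factor $\sqrt{\dim(\tilde{\mathcal{D}}_\xi)}\sqrt{\dim(\bar{\mathbf{1}})}$ from \cref{lem:cat-dimension} and \cref{prop:1-bar}, rewrite the Weyl denominator $\sum_w(-1)^{l(w)}\xi^{2\langle w\rho,\rho\rangle}$ as $\xi^{\langle(n-1)\varpi_n,\rho\rangle}D$, divide into the $S$-matrix of \cref{prop:S-mat-gln}, and simplify the exponent. Your difference-of-squares simplification and the paper's direct expansion of $-\langle(2p+n-1)\varpi_n,\rho+p\varpi_n\rangle+\langle(n-1)\varpi_n,\rho\rangle$ are the same computation; you are also slightly more explicit than the paper about why the $S$-matrix entries survive the (super)modularization unchanged on the representatives $\tilde{C}_{n,d}$, which the paper takes for granted.
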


\begin{proof}
  We need to compute the suitable renormalization of the $S$-matrix, which is given by the positive square root of the categorical (super)dimension of $\tilde{\mathcal{D}}_\xi$ multiplied by a square root of the quantum dimension of $\bar{\mathbf{1}}$.

  Thanks to \cref{lem:cat-dimension}, the categorical (super)dimension of $\tilde{\mathcal{D}}_\xi$ is given by
  \[
    \dim(\tilde{\mathcal{D}}_\xi) = (-1)^{n(n-1)/2}\frac{d^n}{\left(\prod_{i=1}^{n-1}(\xi^i-\xi^{-i})^{n-i}\right)^2}.
  \]
  In order to renormalize the $S$-matrix, we need the positive square root of this dimension. Using Weyl's denominator formula we obtain
  \[
    \sum_{w\in W}(-1)^{l(w)}\xi^{2\langle w(\rho),\rho\rangle}=\xi^{2\langle\rho,\rho\rangle}\prod_{i=1}^{n-1}(1-\xi^{-2i})^{n-i}=\xi^{\langle(n-1)\varpi_n,\rho\rangle}\prod_{i=1}^{n-1}(\xi^i-\xi^{-i})^{n-i},
  \]
  so that the desired square root is
  \[
    i^{n(n-1)/2}\frac{\xi^{\langle(n-1)\varpi_n,\rho\rangle}\sqrt{d}^n}{\displaystyle\sum_{w\in W}(-1)^{l(w)}\xi^{2\langle w(\rho),\rho\rangle}}.
  \]
  As the dimension of $\bar{\mathbf{1}}$ is $\pm\xi^{-2\langle (2p+n-1)\varpi_n,\rho+p\varpi_n\rangle}$, there exists a fourth root of unity $\omega$ such that
  \[
    \sqrt{\dim(\tilde{\mathcal{D}}_\xi)}\sqrt{\dim_{2p+d}(\bar{\mathbf{1}})} = \omega\frac{\sqrt{d}^n\xi^{-\langle (2p+n-1)\varpi_n,\rho+p\varpi_n\rangle+\langle(n-1)\varpi_n,\rho\rangle}}{\displaystyle\sum_{w\in W}(-1)^{l(w)}\xi^{2\langle w(\rho),\rho\rangle}}=\omega\frac{\sqrt{d}^n\xi^{-2pn(p+n-1)}}{\displaystyle\sum_{w\in W}(-1)^{l(w)}\xi^{2\langle w(\rho),\rho\rangle}}.
  \]

  Therefore, using \cref{prop:S-mat-gln} the renormalized $S$-matrix is given by
  \[
    \mathbb{S}_{X(\lambda),X(\mu)} = \omega\xi^{\langle\lambda+\mu,(2p+d)\varpi_n\rangle+2pn(p+n-1)}\frac{\sum_{w\in W}(-1)^{l(w)}\xi^{2\langle w(\lambda+\rho),\mu+\rho\rangle}}{\sqrt{d}^n}.
  \]
  The twist has already been computed in \cref{prop:S-mat-gln}.
\end{proof}

If $n\equiv d [2]$ then we have seen that the category $\mathcal{D}_{\xi}$ is slightly degenerate with symmetric center tensor generated by $\varepsilon$. This category inherits a $\mathbb{Z}/2d\mathbb{Z}$-grading from the $\mathbb{Z}$-grading of $\mathcal{C}_\xi$ as the object $\varepsilon^{\otimes 2}$ is of degree $2d$ in $\mathcal{C}_\xi$.

\begin{proposition}
  \label{prop:boxproduct}
  Suppose that $n\equiv d [2]$. The category $\mathcal{D}_\xi$ is equivalent to $\mathcal{D}_{\xi,0} \boxtimes \sVect$, where $\mathcal{D}_{\xi,0}$ is a non-degenerate braided fusion category, if and only if $n$ and $d$ are both odd or $n=d$.
\end{proposition}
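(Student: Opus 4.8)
My plan is to convert the statement into the existence of a suitable $\bZ/2$-grading and then read off the answer from the grading that $\cD_\xi$ inherits from $\cC_\xi$. First I would isolate the following reduction, of the kind established in the theory of slightly degenerate categories: a slightly degenerate braided pivotal fusion category $\cC$ with transparent fermion $\varepsilon$ (so $\dim\varepsilon=-1$, $\theta_\varepsilon=1$) is equivalent to $\cC_0\boxtimes\sVect$ with $\cC_0$ non-degenerate \emph{if and only if} $\cC$ carries a faithful $\bZ/2$-grading $\cC=\cC^{[0]}\oplus\cC^{[1]}$ with $\varepsilon\in\cC^{[1]}$. One implication is the grading coming from the $\sVect$-tensorand. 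For the converse: since $\varepsilon$ is odd, $\cC^{[0]}\cap\mathcal{Z}_{\sym}(\cC)=\langle\mathbf{1}\rangle$, so the dimension formula for Müger centralizers forces $C_{\cC}(\cC^{[0]})$ to have dimension $2$, hence to be pointed of rank $2$; as it contains the transparent object $\varepsilon$ it equals $\langle\varepsilon\rangle\simeq\sVect$. Therefore $\mathcal{Z}_{\sym}(\cC^{[0]})=C_{\cC}(\cC^{[0]})\cap\cC^{[0]}=\langle\mathbf{1}\rangle$, i.e.\ $\cC^{[0]}$ is non-degenerate, and Müger's theorem gives $\cC\simeq\cC^{[0]}\boxtimes C_{\cC}(\cC^{[0]})=\cC^{[0]}\boxtimes\sVect$.

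Next I would apply this to $\cD_\xi$. The $\bZ$-grading of \cref{sec:grading} on $\cC_\xi$ descends to a $\bZ/2d$-grading of $\cD_\xi$ (because $\varepsilon^{\otimes 2}$ lies in degree $2d$), and $\varepsilon=X((d-n)\varpi_1+\varpi_n)$ lies in degree $\langle(d-n)\varpi_1+\varpi_n,\varpi_n\rangle=(d-n)+n=d$. If $d$ is odd — equivalently, under the hypothesis $n\equiv d\ [2]$, if $n$ and $d$ are both odd — then composing this grading with $\bZ/2d\twoheadrightarrow\bZ/2$ gives a faithful $\bZ/2$-grading in which $\varepsilon$ is odd, and by the reduction $\cD_\xi\simeq\cD_{\xi,0}\boxtimes\sVect$. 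If $n=d$ then $\tilde C_{n,d}=\{0\}$, so $\cD_\xi$ has precisely the two simple objects $\mathbf{1}$ and $\varepsilon$; being braided pivotal with $\dim\varepsilon=-1$ and $\theta_\varepsilon=1$, it is equivalent to $\sVect\simeq\mathrm{Vec}\boxtimes\sVect$. This settles the "if" direction.

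For the converse, take $n\equiv d\ [2]$ with $d$ even and $n<d$; by the reduction it is enough to show $\cD_\xi$ admits no $\bZ/2$-grading with $\varepsilon$ odd. Every grading factors through the universal grading group $U(\cD_\xi)$, so the plan is to prove $U(\cD_\xi)\cong\bZ/2d$ with $\varepsilon$ of degree $d$; then a $\bZ/2$-grading with $\varepsilon$ odd would be a homomorphism $\bZ/2d\to\bZ/2$ sending $d$ to $1$, which does not exist since $d$ is even. The $\bZ/2d$-grading above is faithful (for $d>n$ the object $X(\varpi_1)$ has degree $1$, and its tensor powers fill all residues), so what remains — the technical heart — is to show its degree-$0$ component is the adjoint subcategory $(\cD_\xi)_{\mathrm{ad}}$, i.e.\ that every degree-$0$ simple is a constituent of some $X\otimes X^*$; here I would use the tensor product rule \eqref{eq:tensor_fundamental} with the $X(\varpi_i)$ together with the fact that, after the identifications made in passing from $\cC_\xi$ to $\cD_\xi$, the degree-$0$ part is the (perfect) fusion category attached to $\mathfrak{sl}_n$. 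The structure of the pointed subcategory $(\cD_\xi)_{\mathrm{pt}}$, which is $\bZ/2\oplus\bZ/d$ when $n$ is even and $\bZ/2d$ when $n$ is odd (with $\varepsilon^{\otimes n}\simeq\moddet{\xi}^{\otimes d}$), pins down the part of $U(\cD_\xi)$ seen by the braiding and is convenient for the bookkeeping.

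The main obstacle is this last step: computing $U(\cD_\xi)$ — equivalently, identifying $(\cD_\xi)_{\mathrm{ad}}$ — so as to rule out any $\bZ/2$-grading of $\cD_\xi$ beyond those induced from the $\bZ$-grading of $\cC_\xi$. Once that is in hand, the reduction lemma, the "if" direction, and the degenerate case $n=d$ are all formal.
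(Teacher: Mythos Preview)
Your reduction lemma and the ``if'' direction are correct and essentially coincide with the paper's argument: the paper also takes $\cD_{\xi,0}$ to be the even-degree part of the $\bZ/2d$-grading when $d$ is odd, and handles $n=d$ identically.

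For the ``only if'' direction your strategy is sound, but you are making the problem much harder than necessary by aiming to compute the full universal grading group. The paper bypasses this entirely with a one-line argument that fits perfectly inside your own reduction: suppose $d$ is even, $n<d$, and there were a $\bZ/2$-grading on $\cD_\xi$ with $\varepsilon$ odd. The simple object $X(\varpi_1)$ has some degree $\delta\in\{0,1\}$, hence $X(\varpi_1)^{\otimes d}$ sits in degree $d\delta\equiv 0$. But the tensor-product rule \eqref{eq:tensor_fundamental} shows that $\varepsilon=X((d-n)\varpi_1+\varpi_n)$ is a direct summand of $X(\varpi_1)^{\otimes d}$ (e.g.\ take the path $0\to\varpi_1\to\cdots\to\varpi_n\to\varpi_n+\varepsilon_1\to\cdots\to\varpi_n+(d-n)\varepsilon_1$, all of whose intermediate weights lie in $C_{n,d}$). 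Thus $\varepsilon$ would be even, a contradiction. Equivalently, in the language of your reduction, whichever of $X(\varpi_1)$ or $X(\varpi_1)\otimes\varepsilon$ lies in $\cD_{\xi,0}$, its $d$-th tensor power (the two agree since $d$ is even and $\varepsilon^{\otimes 2}\simeq\mathbf{1}$) contains $\varepsilon$, forcing $\varepsilon\in\cD_{\xi,0}$.

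So the ``main obstacle'' you identify simply evaporates: you never need to know $U(\cD_\xi)$ or $(\cD_\xi)_{\mathrm{ad}}$ exactly, only that $\varepsilon$ is hit by an \emph{even} tensor power of some simple object. Your more structural route would certainly work once completed, but the paper's concrete argument with $X(\varpi_1)$ is both shorter and self-contained.
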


\begin{proof}
  Suppose that $n$ and $d$ are both odd and consider the full subcategory $\mathcal{D}_{\xi,0}$ of $\mathcal{D}_\xi$ with objects of even degree in $\mathbb{Z}/2d\mathbb{Z}$. Since $d$ is odd, the object $\varepsilon$ of degree $d$ is not in $\mathcal{C}_\xi$. This shows that for each simple object $X$, either $X$ or $X\otimes \varepsilon$ is in $\mathcal{D}_{\xi,0}$ and that therefore $\mathcal{D}_\xi\simeq \mathcal{D}_{\xi,0} \boxtimes \sVect$.

  Suppose that $n=d$. Then the simple objects of $\mathcal{D}_\xi$ are $\mathbf{1}$ and $\varepsilon$ and then $\mathcal{D}_\xi\simeq \sVect$.

  Suppose now that both $n$ and $d$ are even, that $n>d$ and that there exists a full subcategory $\mathcal{D}_{\xi,0}$ of $\mathcal{D}_\xi$ such that $\mathcal{D}_\xi\simeq \mathcal{D}_{\xi,0} \boxtimes \sVect$. Since $d>n$, the object $X(\varpi_1)$ is a simple object of $\mathcal{D}_\xi$, and $X(\varpi_1)$ or $X(\varpi_1)\otimes \varepsilon$ is in $\mathcal{D}_{\xi,0}$. Because of \eqref{eq:tensor_fundamental}, the simple object $\varepsilon$ is a direct summand of $X(\varpi_1)^{\otimes d}$. Since $d$ is even, $X(\varpi_1)^{\otimes d}\simeq (X(\varpi_1)\otimes \varepsilon)^{\otimes d}$. Therefore $\varepsilon$ is in $\tilde{\mathcal{D}_\xi}$ which is a contradiction because the only simple transparent object of $\mathcal{D}_{\xi,0}$ is isomorphic to the unit object.
\end{proof}

%%%%%%%%%%%%%%%  End of file     %%%%%%%%%%%%

%%% Local Variables:
%%% mode: latex
%%% TeX-master: "../cat_exterior_powers_gd1n"
%%% End:

%%%%%%%%%%%%%%%%%%%%%%%%%%%%%%%%%%%%%%%%%%%%%%%%%%%%%%%%
%                                                      %
%   Exteriors power and Cuntz' positivity conjecture   %
%                                                      %
%%%%%%%%%%%%%%%%%%%%%%%%%%%%%%%%%%%%%%%%%%%%%%%%%%%%%%%%

\section{Exterior powers and Cuntz' positivity conjecture}
\label{sec:ext_powers}

Let $d\geq 1$ be an integer, $1 \leq n\leq d$ be another integer. We keep the notation $\xi=\exp(i\pi/d)$ and let $\zeta=\xi^2=\exp(2i\pi/d)$.

\subsection{Set-up and known results}
\label{sec:set-up_exterior}

We consider the $n$-th exterior power $\bigwedge^n S$ of the matrix $S=\left(\frac{\zeta^{ij}}{\sqrt{d}}\right)_{0\leq i,j < d}$, which is the renormalized character table $S$ of the cyclic group $\mathbb{Z}/d\mathbb{Z}$. We will index the entries of $\bigwedge^n S$ by the set $I_{n,d}$ of $n$-tuples $(i_1,\ldots,i_n)$ with $0\leq i_1 < i_2 < \cdots < i_n < d$. The matrix $\bigwedge^nS$ is symmetric and unitary. For $p\in\mathbb{Z}$ we denote by $i^{(p)}$ the $n$-tuple $(i_1^{(p)},\ldots,i_n^{(k)})$ obtained by reducing modulo $d$ and sorting the tuple $(p,p+1,\ldots,p+n-1)$. If $p\equiv p'\ [d]$ then $i^{(p)}=i^{(p')}$. Explicitly, if $0\leq p < d$,
\[
  i^{(p)} =
  \begin{cases}
    (p,p+1,\ldots,p+n-1) & \text{if } p \leq d-n,\\
    (0,1,\ldots,p+n-1-d,p,p+1,\ldots,d-1) & \text{otherwise}.
  \end{cases}
\]

The entry $(\bigwedge^nS)_{i^{(p)},i}$ is always non-zero since it is a multiple of a Vandermonde determinant. Therefore, we for any $a,b$ and $c$ ordered $n$-tuples, the following complex number:
\[
  {}_pN_{a,b}^c=\sum_{k\in I_{n,d}}\frac{(\bigwedge^nS)_{a,k}(\bigwedge^nS)_{b,k}\overline{(\bigwedge^nS)_{c,k}}}{(\bigwedge^nS)_{i^{(p)},k}}.
\]

\begin{proposition}[{\cite[Theorem 4.2]{cuntz-fusion}}]
  \label{prop:cuntz-integrality}
  For any $p\in\mathbb{Z}$ and $a,b,c \in I_{n,d}$, ${}_pN_{a,b}^c\in \mathbb{Z}$. Therefore, these integers are the structure constants of a $\mathbb{Z}$-algebra.
\end{proposition}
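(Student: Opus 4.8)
The plan is to express the entries of $\bigwedge^n S$ through Schur polynomials and a Vandermonde, perform the defining sum by discrete character orthogonality on $(\mathbb{Z}/d\mathbb{Z})^n$, and thereby rewrite ${}_pN_{a,b}^c$ as a manifestly integral signed sum of Littlewood--Richardson coefficients. First I would record the basic identity: for $a=(a_1<\cdots<a_n)$ and $k=(k_1<\cdots<k_n)$ in $I_{n,d}$, a short computation gives
\[
  (\bigwedge^n S)_{a,k}=d^{-n/2}\det(\zeta^{a_rk_s})_{1\le r,s\le n}=d^{-n/2}\,s_{\lambda(a)}(\zeta^{k_1},\dots,\zeta^{k_n})\,\Delta_k ,
\]
where $\Delta_k=\prod_{1\le r<s\le n}(\zeta^{k_s}-\zeta^{k_r})$ and $\lambda(a)$ is the partition with $\lambda(a)_j=a_{n+1-j}-(n-j)$, which lies in the $n\times(d-n)$ rectangle. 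Since $\zeta$ is a primitive $d$-th root of unity, $\det(\zeta^{i^{(p)}_rk_s})$ equals $\varepsilon_p(\zeta^{k_1}\cdots\zeta^{k_n})^{p}\Delta_k$, where $\varepsilon_p\in\{\pm1\}$ is the sign of the permutation sorting $(p,p+1,\dots,p+n-1)\bmod d$ (so $\varepsilon_p=1$ for $0\le p\le d-n$); in particular this entry never vanishes on $I_{n,d}$. Substituting into the Verlinde sum, the factors $\Delta_k$ and the powers of $\sqrt d$ cancel and one is left with
\[
  {}_pN_{a,b}^c=\frac{\varepsilon_p}{d^{n}}\sum_{k\in I_{n,d}} s_{\lambda(a)}(\zeta^{k})\,s_{\lambda(b)}(\zeta^{k})\,\overline{s_{\lambda(c)}(\zeta^{k})}\;\zeta^{-p(k_1+\cdots+k_n)}\,\lvert\Delta_k\rvert^{2},
\]
with $s_\lambda(\zeta^k)$ short for $s_\lambda(\zeta^{k_1},\dots,\zeta^{k_n})$.

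The summand is a symmetric function of $\zeta^{k_1},\dots,\zeta^{k_n}$ which vanishes as soon as two $k_i$ are congruent modulo $d$ (because of $\lvert\Delta_k\rvert^2$), so $\sum_{k\in I_{n,d}}$ may be replaced by $\tfrac1{n!}\sum_{k\in(\mathbb{Z}/d\mathbb{Z})^n}$. On this domain I would use $s_\lambda(\zeta^k)\Delta_k=(-1)^{n(n-1)/2}a_{\lambda+\delta}(\zeta^k)$ with $\delta=(n-1,\dots,1,0)$ and $a_\mu(z)=\det(z_s^{\mu_r})$; pairing the two Vandermonde factors with $s_{\lambda(a)}$ and $\overline{s_{\lambda(c)}}$ lets the signs cancel, so the summand becomes $a_{\lambda(a)+\delta}(\zeta^k)\,\overline{a_{\lambda(c)+\delta}(\zeta^k)}\,s_{\lambda(b)}(\zeta^k)\,\zeta^{-p(k_1+\cdots+k_n)}$. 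Expanding $a_{\lambda(a)+\delta}s_{\lambda(b)}=a_\delta s_{\lambda(a)}s_{\lambda(b)}=\sum_\rho c^{\rho}_{\lambda(a),\lambda(b)}a_{\rho+\delta}$ by Littlewood--Richardson ($\rho$ over partitions with at most $n$ parts) and absorbing $\zeta^{-p(k_1+\cdots+k_n)}$ as the uniform shift $a_{\rho+\delta}\mapsto a_{\rho+\delta-(p,\dots,p)}$, this gives
\[
  {}_pN_{a,b}^c=\varepsilon_p\sum_\rho c^{\rho}_{\lambda(a),\lambda(b)}\,T_\rho,\qquad T_\rho=\frac{1}{n!\,d^{n}}\sum_{k\in(\mathbb{Z}/d\mathbb{Z})^{n}}a_{\rho+\delta-(p,\dots,p)}(\zeta^{k})\;\overline{a_{\lambda(c)+\delta}(\zeta^{k})}.
\]

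The only step that is not purely formal is to see that $T_\rho$ is an integer in spite of the $n!$ in the denominator. If two entries of $\rho+\delta$ agree modulo $d$, then for every $k\in(\mathbb{Z}/d\mathbb{Z})^n$ the matrix $((\zeta^{k_s})^{(\rho+\delta-(p,\dots,p))_r})_{r,s}$ has two equal rows, so $a_{\rho+\delta-(p,\dots,p)}(\zeta^k)=0$ identically and $T_\rho=0$. Otherwise the residues modulo $d$ of $\rho+\delta-(p,\dots,p)$ form a set of $n$ distinct classes, as do those of $\lambda(c)+\delta$ (the latter because $\lambda(c)$ lies in the $n\times(d-n)$ box); expanding both alternants over the symmetric group and using that $\tfrac1d\sum_{j=0}^{d-1}\zeta^{mj}$ equals $1$ when $d\mid m$ and $0$ otherwise, coordinate by coordinate, the surviving contributions come exactly from the pairs $(u,\pi u)$ of permutations, where $\pi$ is the unique permutation matching the two residue sets --- which exists precisely when those sets coincide. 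Hence $T_\rho=\operatorname{sgn}(\pi)\in\{1,-1\}$ if the residue sets agree and $T_\rho=0$ otherwise, the $n!$ such pairs cancelling the $n!$ denominator. This yields ${}_pN_{a,b}^c=\varepsilon_p\sum_\rho \operatorname{sgn}(\pi_\rho)\,c^{\rho}_{\lambda(a),\lambda(b)}\in\mathbb{Z}$, and the associativity, commutativity and unitality of the resulting structure constants then follow in the usual way from the symmetry and unitarity of $\bigwedge^n S$ together with the nonvanishing of its $i^{(p)}$-th row. (One could instead deduce integrality a posteriori from the categorical Verlinde formula, once $\bigwedge^n S$ is identified, up to a phase and conjugation by a diagonal sign matrix, with the renormalized $S$-matrix of $\widetilde{\mathcal{D}}_{n,\xi}$; the argument above is self-contained.)
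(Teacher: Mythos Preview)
Your proof is correct and self-contained. The key steps---factoring the minors of $S$ as Schur polynomials times a Vandermonde, symmetrizing over $(\mathbb{Z}/d\mathbb{Z})^n$, expanding $s_{\lambda(a)}s_{\lambda(b)}$ via Littlewood--Richardson, and evaluating the resulting alternant pairing $T_\rho$ by character orthogonality---all check out; in particular the sign bookkeeping is fine (the two factors $(-1)^{\binom{n}{2}}$ coming from your convention for $\Delta_k$ do cancel), and your treatment of the $n!$ denominator via the unique matching permutation $\pi$ is exactly right.

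This is, however, a genuinely different route from what the paper does. The paper does not prove the proposition directly: it cites Cuntz for the case $p=0$ and remarks that the same argument works for general $p$, and then obtains a \emph{new} proof only as a corollary of \cref{thm:categorification}, by identifying $\bigwedge^n S$ (up to a global fourth root of unity and conjugation by a diagonal sign matrix) with the renormalized $S$-matrix of the (super)fusion category $\tilde{\mathcal{D}}_\xi$. Integrality then follows because the Verlinde numbers are multiplicities in a (super)Grothendieck ring. Your argument is the elementary Kac--Walton/affine-Weyl computation and yields the explicit signed Littlewood--Richardson formula ${}_pN_{a,b}^c=\varepsilon_p\sum_\rho \operatorname{sgn}(\pi_\rho)\,c^{\rho}_{\lambda(a),\lambda(b)}$, which is more informative at the decategorified level and requires none of the tilting/semisimplification machinery. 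The paper's categorical route, on the other hand, is what powers the later positivity statements (\cref{thm:conj_cuntz_signs,thm:conj_cuntz_ring}): knowing that the fusion algebra \emph{is} a Grothendieck ring, and understanding when $\mathcal{D}_\xi$ splits as $\mathcal{D}_{\xi,0}\boxtimes\sVect$, is exactly what decides the sign conjecture---something your explicit formula alone does not immediately give. Your closing parenthetical correctly anticipates this alternative.
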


In \cite{cuntz-fusion}, it is proven only for $p=0$, but the same argument applies for any $p\in\mathbb{Z}$ (see \cite[Wahl der Eins]{cuntz-these}). Therefore, the matrix $\bigwedge^nS$ satisfies the hypothesis of \cref{sec:fusion_alg} for any choice of special element $i^{(p)}$ and defines a fusion algebra $A_{n,p}$. 

There also exist a $T$-matrix associated with $S$. Let $T$ be the diagonal matrix indexed by $I_{1,d}$ with entries given by $T_a=\zeta_{24}^{d-1}\xi^{a^2+da}$. Then $S$ and $T$ satisfy
\[
  S^4=\id,\quad (ST)^3 = \id\quad\text{and}\quad ST=TS,
\]
see \cite[Proposition 5.4]{cuntz-fusion} and similar relations are satisfied by $\bigwedge^nS$ and $\bigwedge^nT$.

\subsection{Cuntz' conjectures}
\label{sec:conj_positivity}

In \cite{cuntz-fusion}, Cuntz has conjectured the following positivity property:

\begin{conjecture}[{\cite[\S 4.3]{cuntz-fusion}}]
  \label{conj:cuntz_signs}
  Suppose that $1 < n < d$.
  \begin{enumerate}
  \item Suppose moreover that $n$ and $d$ are not both even. Then there exist a choice of signs $(\sigma_a)_{a\in I_{n,d}}\in\{\pm 1\}^{I_{n,d}}$ such that for any $a,b,c\in I_{n,d}$, the integer ${}_pN_{a,b}^c\sigma_a\sigma_b\sigma_c$ is non-negative.

  \item Suppose moreover that both $n$ and $d$ are even. Then for all choices of signs $(\sigma_a)_{a\in I_{n,d}}\in\{\pm 1\}^{I_{n,d}}$ there exists $a,b,c\in I_{n,d}$ such that ${}_pN_{a,b}^c\sigma_i\sigma_j\sigma_k$ is negative. However, the absolute values of ${}_pN_{a,b}^c$ define an associative $\mathbb{Z}$-algebra.
\end{enumerate}
\end{conjecture}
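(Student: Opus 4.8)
The plan is to deduce \cref{conj:cuntz_signs} from the categorical description of $\bigwedge^{n}S$ that is the subject of \cref{thm:categorification}, combined with the structural results of \cref{sec:symmetric_center,sec:mod_data_gln}. Fix $p\in\mathbb{Z}$. The comparison matches the modular datum $(I_{n,d},i^{(p)},\bigwedge^{n}S,\bigwedge^{n}T)$ with the one extracted from $\tilde{\mathcal{D}}_{n,\xi}$ equipped with the pivotal structure $a_{2p+d,-}$; concretely, it yields a bijection $\tilde{C}_{n,d}\xrightarrow{\sim}I_{n,d}$ under which the renormalized $S$-matrix of \cref{thm:modular_data_gln} coincides with $\bigwedge^{n}S$ up to a global phase --- which does not affect the Verlinde numbers \eqref{eq:verlinde} --- and up to conjugation by a diagonal matrix of signs. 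By \cref{lem:change_signs}, proving \cref{conj:cuntz_signs}(1) then amounts to showing that, after a suitable change of basis, the fusion algebra of $\tilde{\mathcal{D}}_{n,\xi}$ has non-negative structure constants, while \cref{conj:cuntz_signs}(2) amounts to showing that no change of basis achieves this yet the absolute values of its structure constants still form an associative $\mathbb{Z}$-algebra. The argument splits into three regimes according to the parity of $n$ and $d$, exactly as in \cref{sec:symmetric_center,prop:boxproduct}.

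\medskip

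For part (1), suppose $n$ and $d$ are not both even. If $n\not\equiv d\ [2]$, then by \cref{sec:symmetric_center} the category $\tilde{\mathcal{D}}_{n,\xi}$ is a genuine (non-super) braided pivotal fusion category, so its fusion algebra is the Grothendieck ring $\Gr(\tilde{\mathcal{D}}_{n,\xi})$, whose structure constants are multiplicities of simple objects in tensor products, hence non-negative; \cref{lem:change_signs} then produces the required signs. If instead $n$ and $d$ are both odd, then $n\equiv d\ [2]$, and since $1<n<d$ we are in the case of \cref{prop:boxproduct} where $\mathcal{D}_{n,\xi}\simeq\mathcal{D}_{n,\xi,0}\boxtimes\sVect$ with $\mathcal{D}_{n,\xi,0}$ non-degenerate. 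Here the fusion algebra is $\Gr(\mathcal{D}_{n,\xi})/([\varepsilon]+[\mathbf{1}])$; by the discussion at the end of \cref{sec:slightly-deg}, changing the set $J$ of $\varepsilon$-orbit representatives is precisely a change of basis as in \cref{lem:change_signs}, and choosing $J$ compatibly with the decomposition turns this quotient into $\Gr(\mathcal{D}_{n,\xi,0})$, once more with non-negative structure constants. In both sub-cases part (1) follows.

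\medskip

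For part (2), suppose $n$ and $d$ are both even; since $1<n<d$ we have $n\neq d$, so \cref{prop:boxproduct} tells us that $\mathcal{D}_{n,\xi}$ is \emph{not} of the form $\mathcal{C}_{0}\boxtimes\sVect$ with $\mathcal{C}_{0}$ non-degenerate. By the criterion recalled at the end of \cref{sec:slightly-deg} --- the answer to \cref{qu:change_signs} for the fusion algebra $\Gr(\mathcal{D}_{n,\xi})/([\varepsilon]+[\mathbf{1}])$ is affirmative exactly when such a decomposition exists --- no choice of signs makes all structure constants of $A_{n,p}$ non-negative, which is the first assertion of part (2). For the second, recall from \cref{sec:mod_data_gln} that $\mathcal{D}_{n,\xi}$ carries a $\mathbb{Z}/2d\mathbb{Z}$-grading inherited from the $\mathbb{Z}$-grading of $\mathcal{C}_{\xi}$, and that $\varepsilon=X((d-n)\varpi_{1}+\varpi_{n})$ lies in degree $\langle(d-n)\varpi_{1}+\varpi_{n},\varpi_{n}\rangle=d$, which is nonzero in $\mathbb{Z}/2d\mathbb{Z}$. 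Hence $Z$ and $\varepsilon\otimes Z$ are never simultaneously summands of a tensor product of simple objects, so $N_{X,Y}^{Z}N_{X,Y}^{\varepsilon\otimes Z}=0$ for all simple $X,Y,Z$; by the computation of structure constants in \cref{sec:slightly-deg}, the quotient $\Gr(\mathcal{D}_{n,\xi})/([\varepsilon]-[\mathbf{1}])$ --- an associative $\mathbb{Z}$-algebra as a quotient of $\Gr(\mathcal{D}_{n,\xi})$ --- has structure constants $N_{X,Y}^{Z}+N_{X,Y}^{\varepsilon\otimes Z}=\lvert N_{X,Y}^{Z}-N_{X,Y}^{\varepsilon\otimes Z}\rvert$, which under the identification above are exactly the $\lvert{}_{p}N_{a,b}^{c}\rvert$.

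\medskip

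The one substantial ingredient is \cref{thm:categorification} itself, i.e.\ the comparison invoked at the start: one must pin down the bijection $\tilde{C}_{n,d}\xrightarrow{\sim}I_{n,d}$ and the value of $p$ (equivalently, the pivotal structure $a_{2p+d,-}$) for which the $S$-matrix of \cref{thm:modular_data_gln} --- an alternating sum over $W$ divided by $\sqrt{d}^{n}$, modified by a $\xi$-power prefactor and a fourth root of unity --- becomes $\bigwedge^{n}S$ up to a global phase and a diagonal sign conjugation, with $i^{(p)}$ corresponding to the unit object. I expect this to be the hard part: it should rest on rewriting the alternating sum over the symmetric group as an exterior power of a Vandermonde determinant, and on verifying that the $\xi$-power prefactors collapse precisely to signs so that \cref{lem:change_signs} applies. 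Everything else is routine bookkeeping: that $L_{2d\varpi_{n}}$ acting as the identity forces a $\mathbb{Z}/2d\mathbb{Z}$-grading on $\mathcal{D}_{n,\xi}$ in which $\varepsilon$ has degree $d$, and that the set $J$ produced by the construction can be replaced, via \cref{lem:change_signs}, by one compatible with $\mathcal{D}_{n,\xi}\simeq\mathcal{D}_{n,\xi,0}\boxtimes\sVect$.
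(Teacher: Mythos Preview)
Your proposal is correct and follows essentially the same route as the paper: both invoke \cref{thm:categorification} to identify $A_{n,p}$ with the (super)Grothendieck ring of $\tilde{\mathcal{D}}_{n,\xi}$ up to signs, then split into the three parity regimes using \cref{prop:boxproduct} and the criterion at the end of \cref{sec:slightly-deg}. Your treatment is in fact slightly more self-contained, since you fold the ``absolute values are associative'' claim directly into part (2) via the $\mathbb{Z}/2d\mathbb{Z}$-grading argument, whereas the paper defers that piece to the separate \cref{thm:conj_cuntz_ring}; also note that \cref{thm:categorification} is already established earlier in the paper, so you may simply cite it rather than flag it as the hard part still to be checked.
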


He also conjectured in his thesis \cite{cuntz-these} that the fusion ring $A$ defined by $\bigwedge^nS$ is a quotient of a free algebra of rank doubled.

\begin{conjecture}[{\cite[Vermutung 5.1.6]{cuntz-these}}]
  \label{conj:cuntz_ring}
  Let $A'$ be a free $\mathbb{Z}$-module with basis $\{b_a,b'_a\ \vert \ a \in I_{n,d}\}$ and denote also by $\{b_a\ \vert \ a \in I_{n,d}\}$ the basis of $A$. Let $\pi\colon A' \rightarrow A$ be the $\mathbb{Z}$-module map defined by $\pi(b_a) = b_a$ and $\pi(b_a')=-b_a$. Define also $\varphi \colon  A \rightarrow A'$ by
  \[
    \sum_{a\in I_{n,d}}\lambda_ab_a \mapsto \sum_{a\in I_{n,d}}(\delta_{\lambda_a>0}\lambda_ab_a - \delta_{\lambda_a<0}\lambda_ab'_a).
  \]
  Then the multiplication on $A'$ defined by $xy = \varphi(\pi(x)\pi(y))$ is associative and its structure constants lie in $\mathbb{N}$.
\end{conjecture}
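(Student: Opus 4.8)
The plan is to read the passage from $A$ to $A'$ as the passage from a quotient Grothendieck ring to the full Grothendieck ring of the category $\mathcal{D}_{n,\xi}$ (the category $\mathcal{D}_\xi$ of \cref{sec:symmetric_center}). Recall that $\mathcal{D}_{n,\xi}$ is semisimple of rank $2\binom{d}{n}$, carries a $\mathbb{Z}/2d\mathbb{Z}$-grading inherited from the $\mathbb{Z}$-grading of \cref{sec:grading}, and that its symmetric center is generated by a single transparent invertible object $\varepsilon$, of order $2$, which sits in the nonzero degree $d$ (\cref{prop:symmetric_center}); since $\varepsilon\otimes\varepsilon\simeq\mathbf{1}$ and tensoring by $\varepsilon$ is fixed-point free, $\Gr(\mathcal{D}_{n,\xi})$ is free with basis $\{[X_a],[X_a\otimes\varepsilon]\ :\ a\in I_{n,d}\}$, where $(X_a)_{a\in I_{n,d}}$ is a transversal of the $\langle\varepsilon\rangle$-orbits in $\Irr(\mathcal{D}_{n,\xi})$. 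The first step — and, I expect, the only delicate one — is the comparison carried out in this section: for the pivotal structure $a_{2p+d,-}$ and a suitable choice of transversal, the fusion ring $A$ attached to $\bigwedge^n S$ and the distinguished tuple $i^{(p)}$ is isomorphic to the quotient $\Gr(\mathcal{D}_{n,\xi})/([\varepsilon]+[\mathbf{1}])$, via $b_a\mapsto [X_a]$ with $X_{i^{(p)}}=\mathbf{1}$. One then puts $A':=\Gr(\mathcal{D}_{n,\xi})$, $b_a:=[X_a]$ and $b_a':=[X_a\otimes\varepsilon]$.

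With these identifications the map $\pi$ of the statement is exactly the quotient projection $\Gr(\mathcal{D}_{n,\xi})\twoheadrightarrow A$: indeed $\pi([X_a])=b_a$ and $\pi([X_a\otimes\varepsilon])=[X_a][\varepsilon]=-[X_a]=-b_a$. Since $\varepsilon$ sits in nonzero degree one has $N_{X,Y}^{Z}\,N_{X,Y}^{Z\otimes\varepsilon}=0$ for all simple $X,Y,Z$, so the coefficient of $b_c$ in the $A$-product $b_a b_b$ equals $N_{X_a,X_b}^{X_c}-N_{X_a,X_b}^{X_c\otimes\varepsilon}$ with at most one of the two summands nonzero. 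Hence $\varphi$ applied to $b_a b_b\in A$ — which moves the positive coefficient of each $b_c$ to the same coefficient of $b_c$, and the absolute value of a negative one to the coefficient of $b_c'$ — returns $\sum_c N_{X_a,X_b}^{X_c}b_c+\sum_c N_{X_a,X_b}^{X_c\otimes\varepsilon}b_c'$, which is exactly $[X_a][X_b]$ in $\Gr(\mathcal{D}_{n,\xi})$; the products $b_a b_b'$, $b_a' b_b$ and $b_a' b_b'$ are recovered in the same way using $[\varepsilon]^{2}=[\mathbf{1}]$. Thus the multiplication $xy=\varphi(\pi(x)\pi(y))$ on $A'$ is, on basis elements, the multiplication of the fusion ring $\Gr(\mathcal{D}_{n,\xi})$: it is associative (this is in any case formal, as $\pi\circ\varphi=\id_A$) and its structure constants, being tensor-product multiplicities in a fusion category, lie in $\mathbb{N}$.

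The main obstacle is therefore the first step: showing that $A$ is the quotient by $([\varepsilon]+[\mathbf{1}])$ and not the companion quotient $\Gr(\mathcal{D}_{n,\xi})/([\varepsilon]-[\mathbf{1}])$. This amounts to matching Cuntz's normalization $S=(\zeta^{ij}/\sqrt{d})$ and his choice of $i^{(p)}$ with the categorical $S$-matrix of \cref{thm:modular_data_gln}, keeping track of the fourth root of unity $\omega$, of the shift by $p$, and of a possible complex conjugation $\xi\leftrightarrow\xi^{-1}$; reshuffling the transversal only changes the $b_a$ by signs (see \cref{sec:slightly-deg}) and is harmless. Granting this, the computation above proves the statement, and it shows that the genuine content lies in the case where $n$ and $d$ are both even with $n<d$: only then does $\mathcal{D}_{n,\xi}$ fail to split as $\mathcal{D}_0\boxtimes\sVect$ with $\mathcal{D}_0$ non-degenerate (\cref{prop:boxproduct}), so that $A$ really has negative structure constants and $A'$ is a non-split ``positive double'' of $A$. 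In every other case $A$ is, after a sign change of basis, already a genuine fusion ring — namely $\Gr(\tilde{\mathcal{D}}_{n,\xi})$ when $n$ and $d$ have different parities, and $\Gr(\mathcal{D}_0)$ when $\mathcal{D}_{n,\xi}\simeq\mathcal{D}_0\boxtimes\sVect$ — so $A'$ reduces to $A\otimes_{\mathbb{Z}}\mathbb{Z}[\mathbb{Z}/2\mathbb{Z}]$ and the claim is immediate; this dichotomy mirrors the two cases of \cref{conj:cuntz_signs}.
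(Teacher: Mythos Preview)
Your proposal is correct and follows essentially the same approach as the paper: identify $A'$ with $\Gr(\mathcal{D}_{\xi})$, $A$ with its quotient by $[\varepsilon]+[\mathbf{1}]$ via \cref{thm:categorification}, and use that $\varepsilon$ sits in nontrivial degree for the $\mathbb{Z}/2d\mathbb{Z}$-grading to get $N_{X,Y}^{Z}N_{X,Y}^{Z\otimes\varepsilon}=0$. You actually spell out the verification that the multiplication $\varphi(\pi(x)\pi(y))$ coincides with the product in $\Gr(\mathcal{D}_{\xi})$ more carefully than the paper does, and your remark that associativity alone is formal from $\pi\circ\varphi=\id_A$ is a nice observation.
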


The ring $A'$ has then two quotients, namely $A$ obtained by identifying $b_i$ and $-b'_i$ and another one $A^{\abs}$ obtained by identifying $b_i$ and $b'_i$. It is clear that $A^{\abs}$ has non-negative structure constants and that its structure constants are the absolute values of the structure constants of $A$. One can depicts the situation by the following diagram
\[
  \begin{tikzcd}
    & A' \ar[dl,two heads,"b_i=-b'_i"'] \ar[dr,two heads,"b_i=b'_i"] &\\
   A & & A^{\abs}
  \end{tikzcd}
\]
which is similar to the situation explained in \cref{sec:slightly-deg}.

Note that if one can find a change of basis of $A$ by changing signs such that the structure constants are non-negative, then \cref{conj:cuntz_ring} is almost trivial.

\subsection{Relationship with quantum $\mathfrak{gl}_n$}
\label{sec:relationship}

We now relate the renormalized $S$-matrix of the category $\tilde{\mathcal{D}}_\xi$ for $\gl_n$ introduced in \cref{sec:symmetric_center} with the exterior power $\bigwedge^nS$, up to some signs. The twists will also correspond with the diagonal matrix $\bigwedge^nT$, up to a multiplication by a root of unity. The pivotal structure on $\tilde{\mathcal{D}}_\xi$ will depend on the choice of the special element $i^{(p)}$.

We define for every $p\in\mathbb{Z}$ a map $\iota_p\colon I_{n,d} \rightarrow C_{n,d}$ by
\[
  a \mapsto w_0\left(\sum_{i=1}^na_{i}\varepsilon_i\right) - \rho - p\varpi_n.
\]
Since $a$ is strictly increasing, $\iota_p(a) \in P^+$. Moreover, $\iota_p(a)_1-\iota_p(a)_n = a_n-a_1-n+1 \leq d-n$, that is $\iota_p(a) \in C_{n,d}$. We then define $\tilde{\iota}_p(a) \in \tilde{C}_{n,d}$ as the unique element in $\tilde{C}_{n,d}$ such that $\tilde{\iota}_p(a)\sim \iota_p(a)$. As $\in_p$ is clearly an injection, so is $\tilde{\iota}_p$. But $\lvert \tilde{C}_{n,d} \rvert = \binom{d}{n} = \lvert I_{n,d} \rvert$ and $\tilde{\iota}_p$ is bijective.

\begin{lemma}
  \label{lem:invertible}
  For all $p\in \mathbb{Z}$ and $k\in\mathbb{Z}$ we have $\iota_p(i^{(k)}) \sim \left(k-p\right)\varpi_n$.
\end{lemma}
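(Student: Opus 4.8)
The plan is to compute $\iota_p(i^{(k)})$ directly from its definition and then recognize the answer as a $\sim$-translate of $(k-p)\varpi_n$. First I would reduce to the case $0 \le k < d$, since both sides depend only on $k$ modulo $d$: the right-hand side does because $\varepsilon^{\otimes n} \simeq \moddet{\xi}^{\otimes d}$ gives $\lambda \sim \lambda + d\varpi_n$ for all $\lambda$ (as noted in the proof of \cref{lem:equiv-weights}), and the tuple $i^{(k)}$ is defined via reduction modulo $d$, so $i^{(k)} = i^{(k')}$ when $k \equiv k'\ [d]$. Hence it suffices to treat $0 \le k < d$.

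Next I would split into the two cases of the explicit formula for $i^{(k)}$. If $k \le d-n$, then $i^{(k)} = (k, k+1, \ldots, k+n-1)$, so $\sum_i i^{(k)}_i \varepsilon_i = \sum_{i=1}^n (k+i-1)\varepsilon_i = k(\varepsilon_1+\cdots+\varepsilon_n) + \sum_{i=1}^n(i-1)\varepsilon_i = k\varpi_n + w_0(\rho)$, using $\rho = \sum_{i=1}^n (n-i)\varepsilon_i$ and $w_0(i) = n+1-i$. Applying $w_0$ (which fixes $\varpi_n$ and is an involution) gives $w_0\bigl(\sum_i i^{(k)}_i\varepsilon_i\bigr) = k\varpi_n + \rho$, so $\iota_p(i^{(k)}) = k\varpi_n + \rho - \rho - p\varpi_n = (k-p)\varpi_n$, which is even literally equal, not just $\sim$-equivalent. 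In the other case $d-n < k < d$, the tuple is $i^{(k)} = (0,1,\ldots,k+n-1-d,\,k,k+1,\ldots,d-1)$; here I would identify $w_0\bigl(\sum_i i^{(k)}_i\varepsilon_i\bigr) - \rho$ as the weight $\mu$ obtained from $(k-d)\varpi_n$ by applying a single elementary shift $\sim_{\elem}$. Concretely, with $j = k+n-d$ (so $1 \le j \le n-1$ in this range), the sorted tuple decomposes into a block of $j$ small entries $0,\ldots,j-1$ and a block of $n-j$ large entries $k,\ldots,d-1$; reversing via $w_0$ and subtracting $\rho$ should yield $\sh^{n-j}\bigl((k-d)\varpi_n\bigr) + (d-n)\varpi_{n-j} + (n-j)\varpi_n$ or the analogous single-step translate, which by definition satisfies $\mu \sim (k-d)\varpi_n \sim k\varpi_n \sim (k-p)\varpi_n + p\varpi_n$; more precisely $\iota_p(i^{(k)}) \sim (k-p)\varpi_n$.

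The computation in the second case is the only place requiring care: one must correctly match the combinatorial description of the ``wrap-around'' tuple $i^{(k)}$ with the shift operator $\sh$ and the fundamental weights $\varpi_j$, and verify that the resulting weight is exactly one $\sim_{\elem}$-step (or a controlled number of steps) away from $(k-d)\varpi_n$. I expect the bookkeeping of indices — tracking which coordinates get the $(d-n)\varpi_j$ contribution after applying $w_0$ — to be the main obstacle, though it is entirely mechanical. A clean way to organize it is to write $\sum_i i^{(k)}_i \varepsilon_i$ in the wrap-around case as $\sum_{i=1}^{j}(i-1)\varepsilon_i + \sum_{i=j+1}^{n}(k+i-1-j)\varepsilon_i$ and simplify directly, comparing with $\sh^{n-j}$ applied to a multiple of $\varpi_n$ shifted by the appropriate invertible weight; the identity $X((d-n)\varpi_i + r\varpi_n)\otimes X(\lambda) \simeq X(\sh^i(\lambda)+(d-n)\varpi_i+r\varpi_n)$ recorded before \cref{prop:symmetric_center} is exactly the decategorified statement one needs.
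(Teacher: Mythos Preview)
Your proposal is correct and follows essentially the same route as the paper: reduce to $0\le k<d$ using $\lambda\sim\lambda+d\varpi_n$, compute $\iota_p(i^{(k)})=(k-p)\varpi_n$ directly when $k\le d-n$, and in the wrap-around case $d-n<k<d$ recognize the answer as a single $\sim$-translate of $(k-d-p)\varpi_n$. Your index $j=k+n-d$ satisfies $n-j=d-k$, and the paper records the explicit value $\iota_p(i^{(k)})=-p\varpi_n+(d-n)\varpi_{d-k}$, then checks this equals $\sh^{d-k}\bigl((k-d-p)\varpi_n\bigr)+(d-n)\varpi_{d-k}+(d-k)\varpi_n\sim (k-d-p)\varpi_n\sim(k-p)\varpi_n$; this is exactly the computation you outline, with the $-p\varpi_n$ carried along from the start rather than added at the end (which is harmless since $\sh$ fixes $\varpi_n$, so $\sim$ is compatible with translation by multiples of $\varpi_n$).
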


\begin{proof}
  We may and will suppose that $0 \leq k < d$ since $\lambda\sim \lambda+d\varpi_n$ for any $\lambda\in C_{n,d}$.

  We check that
  \[
    \iota_p(i^{(k)})=
    \begin{cases}
      \left(k-p\right)\varpi_n & \text{if } 0 \leq k \leq d-n,\\
      \displaystyle -p\varpi_n + (d-n)\varpi_{d-k}& \text{otherwise}.
    \end{cases}
  \]
  If $d-n < k < d$, we see that
  \[
    \iota_p(i^{(k)}) = \sh^{d-k}\left((k-d-p)\varpi_n\right) + (d-n)\varpi_{d-k} + (d-k)\varpi_n\sim \left(k-d-p\right)\varpi_n\sim \left(k-p\right)\varpi_n,
  \]
  which ends the proof.
\end{proof}

Therefore $\tilde{\iota}_p(i^{(p)}) = 0$ and $X(\tilde{\iota}_p(i^{(-p+1-n)}))\simeq \bar{\mathbf{1}}$.

\begin{theorem}
  \label{thm:categorification}
  Let $p\in\mathbb{Z}$. We equip the category $\tilde{\mathcal{D}}_\xi$ with the pivotal structure $a_{2p+d,-}$. The (super)fusion category $\tilde{\mathcal{D}}_\xi$ is a categorification of the modular datum defined by $\bigwedge^nS$ and $\bigwedge^nT$: there exist a fourth root of unity $\omega$ and signs $(\sigma_a)\in\{\pm 1\}^{I_{n,d}}$ with $\sigma_{i^{(p)}}=1$ such that
\[
  \mathbb{S}_{X(\tilde{\iota}_p(a)),X(\tilde{\iota}_p(b))} = \omega\sigma_a\sigma_b(\bigwedge\nolimits^nS)_{a,b} \quad\text{and}\quad \theta_{X(\tilde{\iota}_p(a))} = \zeta_*(\bigwedge\nolimits^nT)_{a},
\]
where $\zeta_* = \zeta_{24}^{n(1-d)}\xi^{-\langle\rho,\rho\rangle-pn(p+d)-(2p+d)\binom{n}{2}}$.

The (super)Grothendieck ring of $\tilde{\mathcal{D}}_\xi$ is then isomorphic to the ring defined by $\bigwedge^nS$ with unit parameterized by $i^{(p)}$.
\end{theorem}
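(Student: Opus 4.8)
The plan is to evaluate on the explicit weights $\iota_p(a)$, $a\in I_{n,d}$, the closed formulas for the renormalized $S$-matrix and the twist of $\tilde{\mathcal{D}}_\xi$ obtained in \cref{thm:modular_data_gln}, and then to match the outcome with $\bigwedge^nS$ and $\bigwedge^nT$. The first thing I would do is the reduction to $\iota_p$: although \cref{thm:modular_data_gln} is stated for weights in $\tilde C_{n,d}$, the $S$-matrix entries and twists are isomorphism invariants of $\tilde{\mathcal{D}}_\xi$, and the (super)modularization is a braided pivotal functor that identifies $X(\lambda)$ with $X(\mu)$ for every $\lambda\sim\mu$ (and is injective on the remaining simples); since the $S$-matrix formula of \cref{prop:S-mat-gln} is moreover valid on all of $C_{n,d}$, the formulas of \cref{thm:modular_data_gln} may be evaluated at $\iota_p(a)=w_0\bigl(\sum_i a_i\varepsilon_i\bigr)-\rho-p\varpi_n$ even though this weight need not lie in $\tilde C_{n,d}$. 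By \cref{lem:equiv-weights} one has $\tilde\iota_p(a)\sim\iota_p(a)$, so this legitimately computes the quantities appearing in the statement.

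For the $S$-matrix, the computational core is the identity $\iota_p(a)+\rho=\sum_{j=1}^n(a_{n+1-j}-p)\,\varepsilon_j$, immediate from $w_0(\varepsilon_i)=\varepsilon_{n+1-i}$. Feeding this into the alternating sum over $W$ appearing in \cref{thm:modular_data_gln} turns it into the $n\times n$ determinant $\det\!\bigl(\xi^{2(a_{n+1-i}-p)(b_{n+1-j}-p)}\bigr)$; pulling $\zeta^{-p\,a_{n+1-i}}$ out of the $i$th row, $\zeta^{-p\,b_{n+1-j}}$ out of the $j$th column and the common factor $\zeta^{p^2}$ out of every entry (with $\zeta=\xi^2$), and then undoing the two coordinate reversals (whose determinants cancel), one is left with $\zeta^{-p(|a|+|b|)+np^2}\det(\zeta^{a_ib_j})=\zeta^{-p(|a|+|b|)+np^2}\sqrt{d}^n(\bigwedge^nS)_{a,b}$, where $|a|=\sum_i a_i$. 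Combining this with $\langle\iota_p(a),\varpi_n\rangle=|a|-\binom{n}{2}-pn$, I expect the $\xi$-prefactors of \cref{thm:modular_data_gln} to collapse to an $a,b$-independent constant times $\xi^{d(|a|+|b|)}=(-1)^{|a|+|b|}$, using $\xi^d=-1$. Setting $\sigma_a=(-1)^{|a|-|i^{(p)}|}$, so that $\sigma_{i^{(p)}}=1$ and $(-1)^{|a|+|b|}=\sigma_a\sigma_b$, this yields $\mathbb{S}_{X(\tilde\iota_p(a)),X(\tilde\iota_p(b))}=\omega\,\sigma_a\sigma_b(\bigwedge^nS)_{a,b}$ for a single scalar $\omega$; and $\omega$ is a fourth root of unity because the renormalized $S$-matrix satisfies $\mathbb{S}^4=\id$, while $(\bigwedge^nS)^4=\bigwedge^n(S^4)=\id$ and conjugation by the sign matrix $\mathrm{diag}(\sigma_a)$ preserves the relation $M^4=\id$.

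For the twist I would put $\nu=\iota_p(a)+\rho$ and expand the exponent in $\theta_{X(\iota_p(a))}=\xi^{\langle\nu-\rho,\;\nu+\rho+(2p+d)\varpi_n\rangle}$; the cross terms $\pm\langle\nu,\rho\rangle$ cancel, leaving $\langle\nu,\nu\rangle-\langle\rho,\rho\rangle+(2p+d)\bigl(\langle\nu,\varpi_n\rangle-\binom{n}{2}\bigr)$, which with $\langle\nu,\nu\rangle=\sum_i(a_i-p)^2$ and $\langle\nu,\varpi_n\rangle=|a|-np$ simplifies to $\sum_i(a_i^2+da_i)-pn(p+d)-\langle\rho,\rho\rangle-(2p+d)\binom{n}{2}$. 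Since $(\bigwedge^nT)_a=\prod_i T_{a_i}=\zeta_{24}^{n(d-1)}\prod_i\xi^{a_i^2+da_i}$, this is precisely $\theta_{X(\tilde\iota_p(a))}=\zeta_*(\bigwedge^nT)_a$ with $\zeta_*=\zeta_{24}^{n(1-d)}\xi^{-\langle\rho,\rho\rangle-pn(p+d)-(2p+d)\binom{n}{2}}$.

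Finally, for the ring statement I would invoke the propositions recalled in \cref{sec:slightly-deg}, which identify the (super)Grothendieck ring of $\tilde{\mathcal{D}}_\xi$ with the fusion algebra of its renormalized $S$-matrix with distinguished element $\mathbf{1}$; here $\mathbf{1}=X(0)$ is indexed by $i^{(p)}$ since $\tilde\iota_p(i^{(p)})=0$, and $\bar{\mathbf{1}}\cong\moddet{\xi}^{-(2p+n-1)}$ is indexed by $i^{(-p+1-n)}$ by \cref{lem:invertible} and \cref{prop:1-bar}. Then \cref{lem:change_signs}, together with the fact that rescaling $\mathbb{S}$ by the modulus-one scalar $\omega$ does not change the associated fusion algebra, identifies this ring with the one defined by $\bigwedge^nS$ with unit $i^{(p)}$. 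I expect the genuine obstacle to be the preliminary reduction of the first paragraph --- namely checking carefully that (super)modularization leaves the $S$-matrix entries and twists unchanged, so that the formulas of \cref{thm:modular_data_gln} may be applied at the non-reduced weights $\iota_p(a)$; everything after that is a matter of tracking powers of $\xi$, $\zeta$ and $\zeta_{24}$ and of recognizing Vandermonde-type determinants.
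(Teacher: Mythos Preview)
Your approach and the computations are essentially those of the paper: evaluate the formulas of \cref{thm:modular_data_gln} at $\iota_p(a)$, recognize the alternating sum over $W$ as $\det(\zeta^{a_ib_j})$, and expand the twist exponent directly. The twist part is fine as written, since $\theta_\varepsilon=1$ for the chosen pivotal structure (\cref{lem:gen_center}) guarantees $\theta_{X(\tilde\iota_p(a))}=\theta_{X(\iota_p(a))}$.

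The gap is precisely the reduction you flag at the end, and it does \emph{not} come out the way you assert. In the slightly degenerate case $n\equiv d\ [2]$ one has $\dim(\varepsilon)=-1$ by \cref{lem:gen_center}, and since $\varepsilon$ is transparent, $S_{X\otimes\varepsilon,Y}=\dim(\varepsilon)\,S_{X,Y}=-S_{X,Y}$. Hence if $X(\tilde\iota_p(a))\simeq X(\iota_p(a))\otimes\varepsilon^{k_a}$ in $\mathcal{D}_\xi$, evaluating the formula of \cref{thm:modular_data_gln} at $\iota_p(a)$ differs from $\mathbb{S}_{X(\tilde\iota_p(a)),X(\tilde\iota_p(b))}$ by a sign $\eta_a\eta_b$ with $\eta_a=(-1)^{k_a}$. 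Put differently, supermodularization only adds an \emph{odd} isomorphism between $X$ and $X\otimes\varepsilon$, so the $S$-entry is not preserved on the nose. The paper handles this by writing $\sigma_a=\eta_a(-1)^{|a|}$ and then flipping all signs if necessary to force $\sigma_{i^{(p)}}=1$; your formula $\sigma_a=(-1)^{|a|-|i^{(p)}|}$ is correct only when $n\not\equiv d\ [2]$. Once you incorporate the $\eta_a$, the remainder of your argument goes through unchanged.
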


\begin{proof}
  We use the formula of the renormalized $S$-matrix of $\tilde{\mathcal{D}}_\xi$ given in \cref{thm:modular_data_gln}. Since for all $a\in C_{n,d}$ we have $\tilde{\iota}_p(a)\sim \iota_p(a)$, there exist a sign $\eta_a\in\{\pm 1\}$ such that for all $a,b\in C_{n,d}$,
  \[
    \mathbb{S}_{X(\tilde{\iota}_p(a)),X(\tilde{\iota}_p(b))} = \omega\eta_a\eta_b\xi^{\langle\iota_p(a)+\iota_p(b),(2p+d)\varpi_n\rangle+2pn(p+n-1)}\frac{\sum_{w\in W}(-1)^{l(w)}\xi^{2\langle w(\iota_p(a)+\rho),\iota_p(b)+\rho\rangle}}{\sqrt{d}^n}.
  \]

  Now, for any $w\in W$, we have
  \begin{align*}
    \langle w(\iota_p(a)+\rho),\iota_p(b)+\rho\rangle
    &= \langle ww_0(\sum_{i=1}^na_i\varepsilon_i)-p\varpi_n,w_0(\sum_{i=1}^nb_i\varepsilon_i)-p\varpi_n\rangle \\
    &= \sum_{i=1}^na_{w_0^{-1}w^{-1}w_0(i)}b_i - p\sum_{i=1}^n(a_i+b_i) + p^2n,
  \end{align*}
  so that
  \[
    \mathbb{S}_{X(\tilde{\iota}_p(a)),X(\tilde{\iota}_p(b))}=\omega\eta_a\eta_b(-1)^{\sum_{i=1}^n(a_i+b_i)}\frac{\sum_{w\in W}(-1)^{l(w)}\prod_{i=1}^n\xi^{a_{w(i)}b_i}}{\sqrt{d}^n}.
  \]

  We now set $\sigma_a=\eta_a(-1)^{\sum_{j=1}^n a_j}$. We hence find that $\mathbb{S}_{X(\tilde{\iota}_p(a)),X(\tilde{\iota}_p(b))}=\omega \sigma_a\sigma_b (\bigwedge^nS)_{a,b}$. By changing the sign of every $\sigma_a$ if necessary, we have $\sigma_{i^{(p)}}=1$.

  For the value of the twist, since $\theta_{2p+d,X(\tilde{\iota}_p(a))}=\theta_{2p+d,X(\iota_p(a))}$, we find that
  \[
    \theta_{2p+d,X(\tilde{\iota}_p(a))} = \xi^{\langle\iota_p(a),\iota_p(a)+2\rho+(2p+d)\varpi_n\rangle}.
  \]
  But 
  \[
    \langle\iota_p(a),\iota_p(a)+2\rho+(2p+d)\varpi_n\rangle = \langle w_0(\sum_{i=1}^n a_i\varepsilon_i)-\rho-p\varpi_n,w_0(\sum_{i=1}^n a_i\varepsilon_i)+\rho+p\varpi_n+d\varpi_n\rangle,
  \]
  and therefore $\theta_{2p+d,X(\tilde{\iota}_p(a))} = \xi^{-\langle\rho,\rho\rangle-pn(p+d)-(2p+d)\binom{n}{2}}\xi^{\sum_{i=1}^n(a_i^2+da_i)}$, which leads to the desired formula.
\end{proof}

As a corollary, we obtain a new proof of \cref{prop:cuntz-integrality} since the signs $\sigma_a$ do not change the integrality of the structure constants ${}_pN_{a,b}^c$.

\begin{corollary}
  For any $p$ and $r$, the fusion algebras $A_{n,p}$ and $A_{n,r}$ are isomorphic: the structure of the fusion ring defined bu $\bigwedge^n S$ does not depend on the choice of the special element of the form $i^{(p)}$.
\end{corollary}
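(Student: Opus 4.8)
\emph{Proof strategy.} The plan is to deduce the corollary directly from \cref{thm:categorification}. The observation that does the work is that the integer $p$ enters the construction of \cref{sec:symmetric_center} \emph{only} through the choice of pivotal structure $a_{2p+d,-}$. Indeed, the braided fusion category $\mathcal{C}_\xi$ obtained in \cref{sec:tilting} by semisimplifying the category of tilting modules is defined without reference to any pivotal structure (step (3) there may be performed with $a_{0,-}$ or with any $a_{k,-}$, and yields the same category); its symmetric center is computed in \cref{prop:symmetric_center} purely from the braiding; and the (super)modularization producing $\tilde{\mathcal{D}}_\xi$ from $\mathcal{D}_\xi$ is a monoidal operation, insensitive to the pivotal data. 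Hence the underlying tensor category $\tilde{\mathcal{D}}_\xi$, and in particular its (super)Grothendieck ring $\Gr(\tilde{\mathcal{D}}_\xi)$, does not depend on $p$; what depends on $p$ is the pivotal structure, hence the twist and the renormalized $S$-matrix.

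With this in hand I would argue as follows. By the last assertion of \cref{thm:categorification}, once $\tilde{\mathcal{D}}_\xi$ is equipped with $a_{2p+d,-}$ its (super)Grothendieck ring is isomorphic to the fusion ring defined by $\bigwedge^n S$ with special element $i^{(p)}$, that is, to $A_{n,p}$. Internally this uses \cref{lem:change_signs}: the renormalized $S$-matrix of $\tilde{\mathcal{D}}_\xi$ equals $\bigwedge^n S$ up to conjugation by the diagonal sign matrix $(\sigma_a)_{a\in I_{n,d}}$ and a global unimodular scalar $\omega$, neither of which changes the isomorphism type of the associated fusion algebra (recall $A_{\omega\mathbb{S}}\simeq A_{\mathbb{S}}$). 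Combining this with the $p$-independence of $\Gr(\tilde{\mathcal{D}}_\xi)$ yields $A_{n,p}\simeq \Gr(\tilde{\mathcal{D}}_\xi)\simeq A_{n,r}$ for all $p$ and $r$, which is the claim.

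I do not anticipate a real obstacle here; the only point requiring a sentence of care is to make the phrase ``$\Gr(\tilde{\mathcal{D}}_\xi)$'' unambiguous, i.e.\ to note that semisimplification, modularization and supermodularization are all indifferent to the auxiliary pivotal structure, so that the Grothendieck ring is genuinely attached to $\xi$ and $n$ alone. A purely combinatorial alternative would be to exhibit an explicit ring isomorphism $A_{n,p}\to A_{n,r}$ by transporting the basis through the invertible basis element indexed by $i^{(r-p)}$ (whose image under $\tilde{\iota}_p$ is, by \cref{lem:invertible}, a power of $\moddet{\xi}$ and hence invertible in the category), but checking associativity and the resulting structure-constant identities by hand is more laborious than the categorical route, so I would keep the argument above.
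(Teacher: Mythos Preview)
Your proposal is correct and is precisely the argument the paper intends: the corollary is stated without proof because it follows immediately from \cref{thm:categorification}, the point being that the underlying (super)fusion category $\tilde{\mathcal{D}}_\xi$ and hence its (super)Grothendieck ring are constructed independently of the pivotal structure $a_{2p+d,-}$, while the theorem identifies this ring with $A_{n,p}$ for every $p$. Your remark that semisimplification and (super)modularization are insensitive to the pivotal data is exactly the ``sentence of care'' needed, and the paper already hints at this in step~(3) of \cref{sec:tilting}.
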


\subsection{Proof of Cuntz' conjectures}

Categorification has turned to be a powerful tool to prove positivity conjectures, and the categorical interpretation of the matrix $\bigwedge^n S$ in terms of the category $\tilde{\mathcal{D}}_\xi$ will be crucial for proving the conjectures.

\begin{theorem}
  \label{thm:conj_cuntz_signs}
  \cref{conj:cuntz_signs} is true.
\end{theorem}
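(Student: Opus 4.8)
The plan is to deduce \cref{conj:cuntz_signs} directly from the categorification provided by \cref{thm:categorification}, using the dictionary between the positivity of structure constants and the structural properties of the (super)fusion category $\tilde{\mathcal{D}}_\xi$ recalled in \cref{sec:slightly-deg}. By \cref{thm:categorification}, the fusion ring $A_{n,p}$ defined by $\bigwedge^n S$ with unit $i^{(p)}$ is isomorphic to the (super)Grothendieck ring of $\tilde{\mathcal{D}}_\xi$, and the sign change $b_a \mapsto \sigma_a b_a$ witnessed there identifies $A_{n,p}$ (after a diagonal sign conjugation, in the sense of \cref{lem:change_signs}) with $\Gr(\tilde{\mathcal{D}}_\xi)$ in its natural basis of simple objects. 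So the question \cref{qu:change_signs} for $\bigwedge^n S$ becomes: does $\tilde{\mathcal{D}}_\xi$, after possibly replacing some $X(\lambda)$ by $\varepsilon\otimes X(\lambda)$ in the chosen set of representatives, have non-negative structure constants? And by the discussion following \cref{sec:slightly-deg}, this holds precisely when the underlying slightly degenerate (or non-degenerate) category splits off a copy of $\sVect$.

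First I would treat case (1), where $n$ and $d$ are not both even (and $1<n<d$). If $n\not\equiv d\ [2]$, then by \cref{sec:symmetric_center} the category $\mathcal{D}_\xi$ is honestly modularizable: $\tilde{\mathcal{D}}_\xi$ is an ordinary non-degenerate fusion category, and its Grothendieck ring automatically has non-negative structure constants (they are multiplicities $N_{X,Y}^Z$ in an actual fusion category). Transporting these signs back through the isomorphism of \cref{thm:categorification} gives signs $(\sigma_a)$ with $\sigma_{i^{(p)}}=1$ and ${}_pN_{a,b}^c\sigma_a\sigma_b\sigma_c\geq 0$ for all $a,b,c$, which is exactly statement (1) in this sub-case. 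If instead $n\equiv d\ [2]$ with $n,d$ not both even — i.e.\ both $n$ and $d$ odd — then \cref{prop:boxproduct} tells us $\mathcal{D}_\xi\simeq \mathcal{D}_{\xi,0}\boxtimes\sVect$ with $\mathcal{D}_{\xi,0}$ non-degenerate. Its (super)Grothendieck ring $\Gr(\mathcal{D}_\xi)/([\varepsilon]+[\mathbf{1}])$ is then identified, after choosing representatives in the even-degree subcategory $\mathcal{D}_{\xi,0}$, with $\Gr(\mathcal{D}_{\xi,0})$, which again has non-negative structure constants. The only bookkeeping is to check that this choice of representatives is compatible with the normalization $\sigma_{i^{(p)}}=1$; since $i^{(p)}$ corresponds to the unit $\mathbf{1}\in\mathcal{D}_{\xi,0}$ (by \cref{lem:invertible}, $\tilde\iota_p(i^{(p)})=0$), the unit is already in the even part, so no conflict arises. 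This establishes (1).

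For case (2), where $n$ and $d$ are both even (so in particular $n\equiv d\ [2]$ and $d>n$ since $n<d$), the category $\mathcal{D}_\xi$ is slightly degenerate, and by \cref{prop:boxproduct} it does \emph{not} split as $\mathcal{D}_{\xi,0}\boxtimes\sVect$. By the equivalence recalled at the end of \cref{sec:slightly-deg} (``the answer to \cref{qu:change_signs} is positive if and only if $\mathcal{C}\simeq\mathcal{C}_0\boxtimes\sVect$''), this means no choice of signs makes all structure constants of $A_{n,p}$ non-negative — which is precisely the first assertion of (2). For the second assertion, that the absolute values $\lvert{}_pN_{a,b}^c\rvert$ still define an associative algebra: the $\mathbb{Z}$-grading on $\mathcal{C}_\xi$ induces a grading on $\mathcal{D}_\xi$ in which $\varepsilon$ sits in non-trivial degree, hence (as noted in \cref{sec:slightly-deg}) $N_{X,Y}^Z N_{X,Y}^{\varepsilon\otimes Z}=0$ for all simple $X,Y,Z$, and so the absolute values of the structure constants of $\Gr(\mathcal{D}_\xi)/([\varepsilon]+[\mathbf{1}])$ are exactly the structure constants of $\Gr(\mathcal{D}_\xi)/([\varepsilon]-[\mathbf{1}])=A^{\abs}$, which is an associative $\mathbb{Z}$-algebra (a quotient of $\Gr(\mathcal{D}_\xi)$). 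Transporting through \cref{thm:categorification} gives that $\lvert{}_pN_{a,b}^c\rvert$ are the structure constants of an associative algebra, completing (2).

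The main obstacle is the second sub-case of (1), namely verifying that when $n$ and $d$ are both odd the splitting $\mathcal{D}_\xi\simeq\mathcal{D}_{\xi,0}\boxtimes\sVect$ from \cref{prop:boxproduct} genuinely yields a \emph{sign change} on the basis $(b_a)$ of $A_{n,p}$ in the precise sense of \cref{qu:change_signs} — i.e.\ that passing from a simple object $X$ to $\varepsilon\otimes X$ corresponds to negating a single basis element $b_a$, not to some more complicated transformation. This is exactly the content of the sentence ``changing a sign of a basis element $[X]$ of $\Gr(\mathcal{C})/([\varepsilon]+[\mathbf{1}])$ amounts to pick $\varepsilon\otimes X$ instead of $X$ in the set $J$'' from \cref{sec:slightly-deg}, combined with \cref{lem:change_signs}, so the work is to assemble these pieces carefully and track the normalization $\sigma_{i^{(p)}}=1$ throughout; no new ideas are needed, only care with the identifications supplied by \cref{lem:invertible} and \cref{thm:categorification}.
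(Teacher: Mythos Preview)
Your proposal is correct and follows essentially the same approach as the paper: both arguments split into the three parity cases, appeal to \cref{thm:categorification} to identify $A_{n,p}$ with the (super)Grothendieck ring of $\tilde{\mathcal{D}}_\xi$, invoke \cref{prop:boxproduct} to decide whether $\mathcal{D}_\xi$ splits off $\sVect$, and use the dictionary of \cref{sec:slightly-deg} to translate this into the (non)existence of a positivity-restoring sign change. Your treatment is in fact slightly more complete than the paper's, since you spell out the grading argument (via $N_{X,Y}^Z N_{X,Y}^{\varepsilon\otimes Z}=0$) for the ``absolute values define an associative algebra'' clause of part (2), which the paper's proof of this theorem leaves implicit (it reappears only in the proof of \cref{thm:conj_cuntz_ring}).
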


\begin{proof}
  We start first with the case $n\not\equiv d\ [2]$. The category $\tilde{\mathcal{D}}_\xi$ equipped with the pivotal structure $a_{2p+d,-}$ is a non-degenerate fusion category with simple objects $X(\lambda)$ for $\lambda\in \tilde{C}_{n,d}$. The Verlinde formula asserts that the multiplicity of $X(\gamma)$ in $X(\alpha)\otimes X(\beta)$ is given by
\[
  \sum_{\kappa \in \tilde{C}_{n,d}}\frac{\mathbb{S}_{\alpha,\kappa}\mathbb{S}_{\beta,\kappa}\bar{\mathbb{S}}_{\gamma,\kappa}}{\mathbb{S}_{0,\kappa}} = {}_p N_{a,b}^c\sigma_a\sigma_b \sigma_c,
\]
where $\tilde{\iota}_p(a)=\alpha, \tilde{\iota}_p(b) = \beta$, and $\tilde{\iota}_p(c)=\gamma$, the equality following from \cref{thm:categorification}. Since a multiplicity in a fusion category is non-negative, we deduce that ${}_p N_{a,b}^c\sigma_a\sigma_b \sigma_c$ is non-negative for all $a,b,c\in I_{n,d}$.

We now turn to the case of $n$ and $d$ odd. Since the category $\tilde{\mathcal{D}}_\xi$ is a non-degenerate superfusion category, the same argument only prove that the structure constants are integers. But we have seen in \cref{prop:boxproduct} that in this case, the category $\mathcal{D}_\xi$, whose $\tilde{\mathcal{D}}_\xi$ is a supermodularization, is equivalent to $\mathcal{D}_{\xi,0}\boxtimes \sVect$ with $\mathcal{D}_{\xi,0}$ a non-degenerate braided fusion category. The modular invariants of $\mathcal{D}_{\xi,0}$ and $\tilde{\mathcal{D}}_\xi$ coincide up to signs, and the end of the proof is similar to the case $n\not\equiv d\ [2]$.

Finally, if both $n$ and $d$ are even, the category $\mathcal{D}_\xi$ is slightly degenerate and since $n>d$, \cref{prop:boxproduct} asserts that the category $\mathcal{D}_\xi$ is not of the form $\mathcal{D}_{\xi,0}\boxtimes \sVect$ for $\mathcal{D}_{\xi,0}$ non-degenerate. Hence we cannot find a change of signs $(\sigma_a)_{a\in I_{n,d}}\in\{\pm 1\}^{I_{n,d}}$ such that for any $a,b,c\in I_{n,d}$, the integer ${}_pN_{a,b}^c\sigma_a\sigma_b\sigma_c$ is non-negative.
\end{proof}

\begin{theorem}
  \label{thm:conj_cuntz_ring}
  \cref{conj:cuntz_ring} is true.
\end{theorem}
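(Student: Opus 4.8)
The plan is to realize the free ring $A'$ of \cref{conj:cuntz_ring} as the (super)Grothendieck ring of a naturally occurring category, and then read off associativity and non-negativity from the categorical structure, exactly as in the scheme of \cref{sec:slightly-deg}. The key observation is that \cref{thm:categorification} identifies the fusion ring $A$ defined by $\bigwedge^n S$ (with unit $i^{(p)}$) with the (super)Grothendieck ring of $\tilde{\mathcal{D}}_\xi$, and that the basis elements $b_a$ correspond, up to the signs $\sigma_a$, to the classes $[X(\tilde{\iota}_p(a))]$. Changing the sign of a basis element $b_a$ is, categorically, the operation of replacing $X$ by $X\otimes\varepsilon$; so the ``doubled'' ring $A'$ with basis $\{b_a,b'_a\}$ should be the full Grothendieck ring $\Gr(\mathcal{D}_\xi)$ of the slightly degenerate (or modularizable) category $\mathcal{D}_\xi$ before killing $\varepsilon$, whose simple objects come in pairs $\{X,X\otimes\varepsilon\}$ indexed by $J=\tilde C_{n,d}$ together with $J\otimes\varepsilon$.

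First I would set up the dictionary precisely: with $\mathcal{D}_\xi$ as constructed in \cref{sec:symmetric_center}, the simple objects are $X(\tilde\iota_p(a))$ and $X(\tilde\iota_p(a))\otimes\varepsilon$ for $a\in I_{n,d}$, and $\Gr(\mathcal{D}_\xi)$ is free of rank $2\binom dn$. Define the $\mathbb{Z}$-linear isomorphism $A'\xrightarrow{\sim}\Gr(\mathcal{D}_\xi)$ by $b_a\mapsto\sigma_a^{+}[X(\tilde\iota_p(a))]$ and $b'_a\mapsto \sigma_a^{+}[X(\tilde\iota_p(a))\otimes\varepsilon]$, where $\sigma_a^{+}$ is an appropriate sign built from the $\sigma_a$ of \cref{thm:categorification} and from the parity of $\langle\tilde\iota_p(a),\varpi_n\rangle$ (i.e. the $\mathbb{Z}/2$-degree of the object). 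One then checks that under this isomorphism the surjection $\pi\colon A'\to A$, $b_a\mapsto b_a$, $b'_a\mapsto -b_a$, becomes the quotient map $\Gr(\mathcal{D}_\xi)\to\Gr(\mathcal{D}_\xi)/([\varepsilon]+[\mathbf 1])\cong A$ of \cref{sec:slightly-deg}, while the map $\varphi\colon A\to A'$ is precisely the canonical section sending a basis element to its sign-corrected preimage in $J\sqcup J\otimes\varepsilon$. The multiplication $xy=\varphi(\pi(x)\pi(y))$ on $A'$ is then, by construction, transported to the multiplication on $\Gr(\mathcal{D}_\xi)$ — hence associative, since tensor product in a fusion category is associative — and its structure constants are the tensor-product multiplicities $N_{X,Y}^Z\in\mathbb{N}$. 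This gives both assertions of the conjecture simultaneously.

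The only genuine subtlety — and where the sign bookkeeping must be done carefully — is verifying that the multiplication induced on $A'$ from $\Gr(\mathcal{D}_\xi)$ agrees on the nose with the formula $xy=\varphi(\pi(x)\pi(y))$, and not merely up to signs. This hinges on the fact, already exploited in \cref{sec:slightly-deg} and available here because $\varepsilon$ sits in nonzero degree for the $\mathbb{Z}$-grading of \cref{sec:grading}, that $N_{X,Y}^Z\cdot N_{X,Y}^{Z\otimes\varepsilon}=0$ for all simple $X,Y,Z$: every product $[X][Y]$ is supported in a single $\varepsilon$-coset of $\Irr$. Consequently, for fixed sign-corrected basis vectors, the product $b_a\cdot b_b$ in $A'$ has all its support among the $b_c$ or all among the $b'_c$, exactly as $\varphi$ applied to a product of $\pi$-images would produce; the sign $\sigma_a\sigma_b\sigma_c$ appearing in \cref{thm:categorification} is absorbed correctly because $\sigma$ is multiplicative on the relevant cosets by the grading. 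The hard part is thus not a new idea but a bookkeeping verification: matching the three sign systems (the $\sigma_a$ of \cref{thm:categorification}, the $\mathbb{Z}/2$-grading signs, and the $\pm$ in the definition of $\varphi,\pi$) and confirming the support condition $N_{X,Y}^ZN_{X,Y}^{Z\otimes\varepsilon}=0$, which follows immediately since $\varepsilon$ has nonzero $\mathbb{Z}$-degree and tensor product is graded. Once this is in place, associativity and $\mathbb{N}$-valuedness of the structure constants of $A'$ are automatic, completing the proof; I would remark at the end that, as noted after the statement, in the cases where \cref{conj:cuntz_signs}(1) applies one even has the stronger statement that $A'\cong A^{\mathrm{abs}}\otimes(\text{something})$, but the uniform argument above covers all cases including $n,d$ both even.
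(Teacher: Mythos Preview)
Your proposal is correct and follows essentially the same approach as the paper: identify $A'$ with $\Gr(\mathcal{D}_\xi)$, observe that $\pi$ becomes the quotient by $[\varepsilon]+[\mathbf 1]$, and use the support condition $N_{X,Y}^Z N_{X,Y}^{Z\otimes\varepsilon}=0$ coming from the fact that $\varepsilon$ sits in nontrivial degree for the $\mathbb{Z}/2d\mathbb{Z}$-grading. The paper's proof is terser because it first remarks that only the case $n\equiv d\equiv 0\ [2]$ needs comment (the other cases being ``almost trivial'' once \cref{conj:cuntz_signs}(1) supplies non-negative structure constants), whereas you attempt a uniform treatment; your uniform version slightly blurs the distinction between the modularizable case ($\dim\varepsilon=1$, quotient by $[\varepsilon]-[\mathbf 1]$) and the slightly degenerate case ($\dim\varepsilon=-1$, quotient by $[\varepsilon]+[\mathbf 1]$), but since you explicitly flag that the former cases are already covered by \cref{conj:cuntz_signs}(1), this does not affect correctness.
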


\begin{proof}
  Only the case of $n$ and $d$ even needs a comment. Since the fusion ring defined by $\bigwedge^nS$ is isomorphic to the quotient of the Grothendieck ring $\Gr(\mathcal{D}_{\xi})$ by the ideal generated by $[\varepsilon]+[\mathbf{1}]$, we easily check that the ring $\Gr(\mathcal{D}_{\xi})$ is isomorphic to the ring $A'$ of \cref{conj:cuntz_ring}. Note that we crucially need that $N_{X,Y}^ZN_{X,Y}^{Z\otimes \varepsilon} = 0$ in $\Gr(\mathcal{D}_{\xi})$, which is true since $\varepsilon$ sits in non trivial degree for the $\mathbb{Z}/2d\mathbb{Z}$-grading.
\end{proof}

%%%%%%%%%%%%%%% End of file     %%%%%%%%%%%%

%%% Local Variables:
%%% mode: latex
%%% TeX-master: "../cat_exterior_powers_gd1n"
%%% End:
%%%%%%%%%%%%%%%%%%%%%%%%%%%%%%%%%%%%%%%%%%%%%%%%%%%%%%
%                                                    %
% Fourier matrices for G(d,1,n) and categorification %
%                                                    %
%%%%%%%%%%%%%%%%%%%%%%%%%%%%%%%%%%%%%%%%%%%%%%%%%%%%%%

\section{Fourier matrices for $G(d,1,n)$}
\label{sec:fourier_d1n}

In \cite{cuntz-fusion}, Cuntz noticed that the Fourier matrices defined by Malle \cite{unipotente} can be expressed using tensor products of exterior powers $\bigwedge^nS$. Since we constructed a categorification of these exterior powers using representations of the quantum enveloping algebra of $\gl_n$ at an even root of unity, we now explore the categorification of the Fourier matrices for $G(d,1,n)$. We fix $d\geq 1$ an integer.

Since we will simultaneously consider different values of $n$, we will add a subscript $n$ to the various objects considered in the previous sections. For example, we will denote the $\mathfrak{gl}_n$-weight $\rho$ by $\rho_n$, the Weyl group $W$ of $\mathfrak{gl}_n$ by $W_n$ and so on.

\subsection{Fourier matrices and exterior powers}
\label{sec:fourier_ext}

We follow the presentation of \cite[Section 3]{cuntz-fusion}. Let $m\in \mathbb{N}$, $Y$ be a totally ordered set with $md+1$ elements and $\pi \colon Y \rightarrow \mathbb{N}$ be a map. Let $w_1<\ldots < w_r$ be such that $\pi(Y)=\{w_1,\ldots,w_r\}$ and $n_i=\lvert \pi^{-1}(w_i) \rvert$. Then we have $\sum_{i=1}^rn_i =md+1$.

We consider the set $\Psi$ of maps $f \colon Y \rightarrow \{0,\ldots,d-1\}$ such that $f$ is strictly increasing on $\pi^{-1}(i)$ for each $i\in\mathbb{N}$. If $f\in \Psi$, we define a sign $\varepsilon(f)\in\{\pm 1\}$ by
\[
  \varepsilon(f) = (-1)^{\left\lvert\{ (y,y') \in Y\times Y\ \middle\vert\ y<y'\text{ and } f(y)<f(y')\}\right\rvert}.
\]

We consider the subset $\Xi$ of functions $f\in \Psi$ such that $\sum_{y\in Y}f(y) \equiv m\binom{d}{2}\ [d]$ and remark that $\Xi$ has $\frac{1}{d}\prod_{i=1}^r\binom{d}{n_i}$ elements. Such functions can be interpreted as symbols in the sense of \cite{unipotente}, which parameterize unipotent characters for the complex reflection group $G(d,1,n)$. For such a function $f\in\Xi$, we denote by $f_i=(f_{i,1},\ldots,f_{i,n_i})$ the ordered tuple obtained from $f(\pi^{-1}(w_i))$. The datum of the function $f$ is equivalent to the data of $f_1,\ldots, f_r$. The Fourier matrix is defined as the matrix indexed by $\Xi$ with entries
\[
  \mathcal{S}_{f,g} = (-1)^{m(d-1)}i^{-\binom{d-1}{2}m}\sqrt{d}\varepsilon(f)\varepsilon(g)\prod_{i=1}^r\overline{\left(\bigwedge\nolimits^{n_i}S\right)_{f_i,g_i}},
\]
and to each function $f\in \Xi$, Malle also associate an eigenvalue of the Frobenius:
\[
  \Fr(f) = \zeta_{12d}^{md(1-d^2)}\prod_{y\in Y}\zeta_{12d}^{-6(f(y)^2+df(y))},
\]
where $\zeta_{12d}^d = \zeta$. We denote by $\mathcal{T}$ the diagonal matrix indexed by $\Xi$ with entries given by the eigenvalues of the Frobenius. Note that, up to a scalar, the eigenvalue of the Frobenius $\Fr(f)$ coincides with $\prod_{i=1}^r\overline{(\bigwedge^{n_i} T)_{f_i}}$.

\begin{proposition}[{\cite[4.15]{unipotente},\cite[Proposition 5.1]{cuntz-fusion}}]
  \label{prop:ST_d1n}
  Let $Y$, $\pi$ and $\Xi$ as above and the associated matrices $\mathcal{S}$ and $\mathcal{T}$.
  \begin{enumerate}
  \item The matrix $\mathcal{S}$ is symmetric and unitary.
  \item The matrices $\mathcal{S}$ and $\mathcal{T}$ satisfy
    \[
      \mathcal{S}^4=\id,\quad (\mathcal{S}\mathcal{T})^3=\id\quad\text{and}\quad \mathcal{S}^2\mathcal{T}=\mathcal{T}\mathcal{S}^2.
    \]
  \item For $1\leq i \leq r$ choose $p_i$ such that $f_0\in \Psi$ defined by $(f_0)_{i}=i^{(p_i)}$ is in $\Xi$. Then the structure constants
    \[
      {}_{f_0}N_{f,g}^h = \sum_{k\in \Xi}\frac{\mathcal{S}_{f,k}\mathcal{S}_{g,k}\overline{\mathcal{S}_{h,k}}}{\mathcal{S}_{f_0,k}}
    \]
    are integers for every $f,g$ and $h\in \Xi$. 
  \end{enumerate}
\end{proposition}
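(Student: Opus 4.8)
The statement is classical, due to Malle \cite[4.15]{unipotente} and Cuntz \cite[Proposition~5.1]{cuntz-fusion}; the plan is to give a uniform derivation of all three parts from the properties of the building blocks $\bigwedge^{n_i}S$ and $\bigwedge^{n_i}T$ recorded in \cref{sec:set-up_exterior} (i.e. \cref{prop:cuntz-integrality} and \cite[Proposition~5.4]{cuntz-fusion}), by means of a single averaging device. First I would work on the large index set $\Psi$, which is in bijection with $\prod_{i=1}^{r}I_{n_i,d}$ via $f\mapsto(f_1,\dots,f_r)$, and introduce the Kronecker tensor products $M=\bigotimes_{i=1}^{r}\bigwedge\nolimits^{n_i}S$ and $N=\bigotimes_{i=1}^{r}\bigwedge\nolimits^{n_i}T$. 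Then $M$ is symmetric and unitary, $N$ is diagonal and unitary, and, by multiplicativity of the Kronecker product, $M^{4}=\id$, $(MN)^{3}=\id$ and $MN=NM$. The defining formulas show that $\mathcal S=\omega'\,\Sigma\,\overline{M}|_{\Xi\times\Xi}\,\Sigma$, where $\omega'=(-1)^{m(d-1)}i^{-\binom{d-1}{2}m}\sqrt{d}$ and $\Sigma=\operatorname{diag}\bigl((\varepsilon(f))_{f\in\Xi}\bigr)$, while $\mathcal T$ is a fixed root of unity times $\overline{N}|_{\Xi\times\Xi}$. The only real obstacle is that $\mathcal S,\mathcal T$ are restrictions of $M,N$ to the proper subset $\Xi\subseteq\Psi$, so that products of $\mathcal S,\mathcal T$ are not literally restrictions of products of $M,N$.

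To bridge this gap I would prove a shift covariance for $\bigwedge^{n}S$. Let $\sigma$ be the simultaneous cyclic shift $f\mapsto\bigl(y\mapsto f(y)+1\bmod d\bigr)$ on $Y$-valued functions; it acts freely on $\Psi$ with $\sigma^{d}=\id$, it raises the content $c(f):=\sum_{y\in Y}f(y)\bmod d$ by $1$, hence it permutes the $d$ level sets $\Psi_{t}=\{f:c(f)=t\}$ cyclically, and $\Xi=\Psi_{m\binom{d}{2}}$ is a set of orbit representatives. A short determinantal computation gives
\[
  \bigl(\textstyle\bigwedge\nolimits^{n}S\bigr)_{a,\sigma(b)}=(-1)^{(n-1)\delta(b)}\,\zeta^{a_{1}+\cdots+a_{n}}\,\bigl(\textstyle\bigwedge\nolimits^{n}S\bigr)_{a,b},
\]
where $\delta(b)=1$ if the top entry of $b$ equals $d-1$ and $0$ otherwise (the sign being the cost of re-sorting the shifted tuple), together with an analogous identity, with a root-of-unity factor, for the diagonal $\bigwedge^{n}T$. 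Consequently, for any product $F(k)$ of entries of the $\bigwedge^{n_i}S$, $\bigwedge^{n_i}T$ and their conjugates in which each coordinate $k_i$ occurs an even number of times among the $\bigwedge^{n_i}S$-factors, $F(\sigma(k))$ differs from $F(k)$ only by a power of $\zeta$ determined by the contents of the fixed external indices. The crucial point is that in every sum occurring below the external indices all lie in $\Xi$, hence have content $\equiv m\binom{d}{2}\pmod d$, and since $2m\binom{d}{2}=md(d-1)\equiv 0\pmod d$ these phases cancel; then $F$ is $\sigma$-invariant and $\sum_{k\in\Psi}F(k)=d\sum_{k\in\Xi}F(k)$.

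Granting this, I would finish as follows. Symmetry of $\mathcal S$ in (1) is immediate from the formula, and unitarity follows from $\sum_{k\in\Xi}\mathcal S_{f,k}\overline{\mathcal S_{g,k}}=d\,\varepsilon(f)\varepsilon(g)\sum_{k\in\Xi}\prod_{i}\overline{(\bigwedge^{n_i}S)_{f_i,k_i}}(\bigwedge^{n_i}S)_{g_i,k_i}=\varepsilon(f)\varepsilon(g)\prod_{i}\delta_{f_i,g_i}=\delta_{f,g}$, where the middle step replaces $\sum_{\Xi}$ by $\tfrac1d\sum_{\Psi}$ and invokes unitarity of each $\bigwedge^{n_i}S$. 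For (2), the same replacement turns $\mathcal S^{4}=\id$, $(\mathcal S\mathcal T)^{3}=\id$, $\mathcal S^{2}\mathcal T=\mathcal T\mathcal S^{2}$ into the corresponding identities for $M$ and $N$, leaving only the verification that the global scalars combine to give exactly $\id$ and not a scalar multiple of it. For (3), one first checks $\mathcal S_{f_0,k}\neq 0$: each factor $(\bigwedge^{n_i}S)_{i^{(p_i)},k_i}$ is a nonzero root of unity times a genuine Vandermonde determinant in the $\zeta^{(k_i)_l}$. Then, substituting into the Verlinde sum, the $\varepsilon(k)$ and the $\sqrt d$ cancel, the shift lemma converts $\sum_{\Xi}$ into $\tfrac1d\sum_{\Psi}$, which factors over $i$, and one obtains
\[
  {}_{f_0}N_{f,g}^{h}=\varepsilon(f)\varepsilon(g)\varepsilon(h)\varepsilon(f_0)\prod_{i=1}^{r}{}_{p_i}N_{f_i,g_i}^{h_i}\in\mathbb Z
\]
by \cref{prop:cuntz-integrality}, the right-hand factors being moreover independent of the choices of $p_i$ by the corollary to \cref{thm:categorification}.

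The hard part will be the scalar bookkeeping in (2): unlike (1) and (3), where essentially only $\lvert\omega'\rvert^{2}=d$ enters, the relations $(\mathcal S\mathcal T)^{3}=\id$ and $\mathcal S^{4}=\id$ are sensitive to the precise $24d$-th roots of unity in $(-1)^{m(d-1)}i^{-\binom{d-1}{2}m}\sqrt d$ and in $\Fr$, and to the phases picked up by $\bigwedge^{n_i}T$ under $\sigma$; confirming that all of these conspire to produce the bare identity is exactly the content of Malle's normalization and is a somewhat delicate (though purely elementary) computation with roots of unity. A secondary nuisance is tracking the re-sorting signs $(-1)^{(n-1)\delta(b)}$ in the covariance lemma and confirming that in each relevant sum the summation index appears to an even total power among the $S$-factors, so that these signs disappear.
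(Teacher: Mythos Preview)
The paper does not actually prove this proposition: it is stated with attributions to Malle \cite[4.15]{unipotente} and Cuntz \cite[Proposition~5.1]{cuntz-fusion} and used as input. The only thing the paper adds is, after \cref{thm:categorification_d1n}, the remark that the categorification yields an \emph{independent} proof of the integrality in part~(3). So there is no in-paper proof to compare against line by line.

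That said, your strategy is sound and is essentially the direct argument one would reconstruct from Cuntz's paper. The key mechanism you isolate---the free cyclic $\sigma$-action on $\Psi$ together with the covariance $(\bigwedge^{n}S)_{a,\sigma(b)}=(-1)^{(n-1)\delta(b)}\zeta^{\sum a_l}(\bigwedge^{n}S)_{a,b}$, so that summands with external indices all in $\Xi$ are $\sigma$-invariant and $\sum_{\Xi}=\tfrac{1}{d}\sum_{\Psi}$---is exactly right, and your computations for (1) and (3) check out (in particular the identity ${}_{f_0}N_{f,g}^{h}=\varepsilon(f)\varepsilon(g)\varepsilon(h)\varepsilon(f_0)\prod_i {}_{p_i}N_{f_i,g_i}^{h_i}$ is correct). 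You are also honest that (2) boils down to a scalar computation with the specific roots of unity in $\omega'$ and in $\Fr$; that verification is elementary but genuinely fiddly, and it is precisely where Malle's normalisations do the work.

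Where your approach and the paper's differ is in spirit for part~(3). You reduce to \cref{prop:cuntz-integrality} by pure averaging, staying entirely at the level of matrices. The paper instead reproves (3) as a by-product of \cref{thm:categorification_d1n}: once $\mathcal{S}$ is identified (up to signs and a global fourth root of unity) with the renormalised $S$-matrix of the (super)fusion category $\tilde{\mathcal{E}}_{\underline{n},\xi^{-1}}$, the Verlinde formula forces the structure constants to be integers because they are multiplicities (or differences of multiplicities in the super case). Your route is shorter and self-contained; the paper's route is conceptually heavier but yields more, namely the categorification itself, the positivity statements in \cref{thm:conj_cuntz_signs}, and the interpretation of the absolute-value ring in \cref{thm:conj_cuntz_ring}.
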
 

\subsection{Symmetric center of some Deligne tensor products}
\label{sec:cat_fourier_d1n}

Recall the that the category $\mathcal{D}_{n,\xi^{-1}}$ of \cref{sec:symmetric_center} is obtained from $\mathcal{C}_{n,\xi^{-1}}$ by adding isomorphisms and that the symmetric center of $\mathcal{D}_{n,\xi^{-1}}$ is tensor generated by $\varepsilon_n = X((d-n)\omega_1+\varpi_n)$ which satisfies $\varepsilon\otimes \varepsilon \simeq \mathbf{1}$ in $\mathcal{D}_{n,\xi^{-1}}$. We consider a Deligne tensor product of the categories $\mathcal{D}_{n,\xi^{-1}}$ associated with the representations the quantum enveloping algebra of $\gl_n$ at a root of unity.

Let $\mathcal{D}_{\underline{n},\xi^{-1}}=\mathcal{D}_{n_1,\xi^{-1}} \boxtimes \ldots \boxtimes \mathcal{D}_{n_r,\xi^{-1}}$ for $\underline{n} = (n_1,\ldots,n_r)$ such that $1\leq n_i \leq d$ and $\sum_{i=1}^nn_i \equiv 1 [d]$. This category is a braided fusion category admitting many pivotal structures. Its simple objects are of the form $X(\underline{\lambda}) = X(\lambda_1)\boxtimes \cdots \boxtimes X(\lambda_r)$ with $\lambda_i \in C_{n_i,d}$. Its symmetric center has $2^r$ simple objects given by $\varepsilon_{\underline{\delta}} = \varepsilon_{n_1}^{\otimes \delta_1}\boxtimes \cdots \boxtimes \varepsilon_{n_r}^{\otimes \delta_r}$, where $\underline{\delta} = (\delta_1,\ldots,\delta_r) \in \{0,1\}^r$ and $\varepsilon_{n_i}$ is the unique simple transparent object of $\mathcal{D}_{n_i,\xi^{-1}}$ non-isomorphic to the unit object. The category $\mathcal{D}_{\underline{n},\xi^{-1}}$ has a natural $(\mathbb{Z}/2d\mathbb{Z})^r$-grading: a simple object $X(\underline{\lambda})$ sits in degree $(\langle\lambda_i,\varpi_{n_i}\rangle)_{1\leq i \leq r}$.

Consider now $\mathcal{E}_{\underline{n},\xi^{-1}}$ the full subcategory with simple objects $X(\underline{\lambda})$ satisfying $\sum_{i=1}^r \langle\lambda_i,\varpi_{n_i}\rangle \equiv 0 [d]$. Thanks to the grading, it is easily seen that the category $\mathcal{E}_{\underline{n},\xi^{-1}}$ is stable under the tensor product and is thus a braided fusion category.

From the construction of the fusion datum of \cref{sec:fourier_ext} and the results of \cref{sec:ext_powers}, we expect that the category $\mathcal{E}_{\underline{n},\xi^{-1}}$ gives a categorification of the fusion datum of \cref{sec:fourier_ext}. We first determine the symmetric center of $\mathcal{E}_{\underline{n},\xi^{-1}}$ in order to ensure that its $S$-matrix has rank $\frac{1}{d}\prod_{i=1}^r\binom{d}{n_i}$.

Since $\varepsilon_i$ is of degree $d$, the objects $\varepsilon_{\underline{\delta}}$ for $\underline{\delta}\in\{0,1\}^d$ are in the symmetric center of $\mathcal{E}_{\underline{n},\xi^{-1}}$, and there are no other simple transparent objects:

\begin{proposition}
  The symmetric center of $\mathcal{E}_{\underline{n},\xi^{-1}}$ is the full subcategory with simple objects $\varepsilon_{\underline{\delta}}$ for $\underline{\delta}\in\{0,1\}^d$.
\end{proposition}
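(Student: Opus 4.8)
The plan is to prove the two inclusions separately. The easy one is essentially the remark preceding the statement: the symmetric center of the Deligne product $\mathcal{D}_{\underline{n},\xi^{-1}}$ is the external product of the $\mathcal{Z}_{\sym}(\mathcal{D}_{n_i,\xi^{-1}})=\{\mathbf{1},\varepsilon_{n_i}\}$, because the double braiding of a Deligne product factors, $c_{X(\underline{\mu}),X(\underline{\lambda})}\circ c_{X(\underline{\lambda}),X(\underline{\mu})}=\boxtimes_i\bigl(c_{X(\mu_i),X(\lambda_i)}\circ c_{X(\lambda_i),X(\mu_i)}\bigr)$, so a simple object is transparent iff each tensor factor is. Hence each $\varepsilon_{\underline{\delta}}$ is transparent in $\mathcal{D}_{\underline{n},\xi^{-1}}$, a fortiori in the full braided subcategory $\mathcal{E}_{\underline{n},\xi^{-1}}$, in which it lies because its $(\mathbb{Z}/2d\mathbb{Z})^r$-degree $(\delta_1 d,\dots,\delta_r d)$ has total degree $\equiv 0\pmod d$. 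The content is the reverse inclusion, and the subtlety is that an object transparent in $\mathcal{E}_{\underline{n},\xi^{-1}}$ is tested against strictly fewer objects than one transparent in $\mathcal{D}_{\underline{n},\xi^{-1}}$, so one must rule out ``new'' transparent objects.

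So let $X(\underline{\lambda})=X(\lambda_1)\boxtimes\cdots\boxtimes X(\lambda_r)$ be a simple transparent object of $\mathcal{E}_{\underline{n},\xi^{-1}}$, and set $S=\{i:n_i\leq d-1\}$, which is nonempty since $\sum_i n_i\equiv 1\pmod d$. First I would show that each factor $X(\lambda_i)$ is invertible. If $n_i\in\{1,d\}$ the category $\mathcal{D}_{n_i,\xi^{-1}}$ is pointed and there is nothing to prove. If $2\leq n_i\leq d-1$, then (using $\sum_j n_j\equiv 1\pmod d$ once more) there is an index $i'\neq i$ with $n_{i'}\leq d-1$; pick a simple object $Z'$ of $\mathcal{D}_{n_{i'},\xi^{-1}}$ of degree $\equiv -1\pmod d$, for instance a constituent of $X(\varpi_1)^{\otimes(d-1)}$. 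Then the simple object $Y$ of $\mathcal{D}_{\underline{n},\xi^{-1}}$ having $X(\varpi_1)$ in slot $i$, $Z'$ in slot $i'$, and units elsewhere has total degree $\equiv 0\pmod d$, so $Y\in\mathcal{E}_{\underline{n},\xi^{-1}}$. Transparency of $X(\underline{\lambda})$ against $Y$, together with the fact that an external product of endomorphisms of a Deligne product equals the identity only if each factor is a scalar, forces the double braiding of $X(\lambda_i)$ with $X(\varpi_1)$ to be a scalar; hence all Weyl constituents of $X(\lambda_i)\otimes X(\varpi_1)$ have the same twist, and the argument in the proof of \cref{prop:symmetric_center}, lifted to $\mathcal{C}_{n_i,\xi^{-1}}$, shows $X(\lambda_i)$ is invertible.

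With every $X(\lambda_i)$ invertible, its double braiding with any simple $X(\mu)$ of $\mathcal{D}_{n_i,\xi^{-1}}$ is a scalar, which by the description of invertible objects in the proof of \cref{prop:symmetric_center} equals $\zeta^{c_i\langle\mu,\varpi_{n_i}\rangle}$ for a well-defined ``monodromy charge'' $c_i\in\mathbb{Z}/d\mathbb{Z}$, with the properties that $c_i=0$ exactly when $X(\lambda_i)$ is transparent in $\mathcal{D}_{n_i,\xi^{-1}}$, i.e. $X(\lambda_i)\in\{\mathbf{1},\varepsilon_{n_i}\}$, and $\langle\lambda_i,\varpi_{n_i}\rangle\equiv c_i n_i\pmod d$. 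Transparency of $X(\underline{\lambda})$ in $\mathcal{E}_{\underline{n},\xi^{-1}}$ then says $\sum_i c_i\langle\mu_i,\varpi_{n_i}\rangle\equiv 0\pmod d$ for every simple $X(\underline{\mu})\in\mathcal{E}_{\underline{n},\xi^{-1}}$; since the slots outside $S$ contribute nothing (and there $c_i=0$ anyway, $\mathcal{D}_{d,\xi^{-1}}$ being its own symmetric center), this reads: $\sum_{i\in S}c_i e_i\equiv 0\pmod d$ for all $(e_i)_{i\in S}\in(\mathbb{Z}/d\mathbb{Z})^S$ with $\sum_{i\in S}e_i\equiv 0\pmod d$. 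A homomorphism $(\mathbb{Z}/d\mathbb{Z})^S\to\mathbb{Z}/d\mathbb{Z}$ vanishing on the index-$d$ subgroup $\{\sum e_i\equiv 0\}$ is a scalar multiple of $(e_i)\mapsto\sum e_i$, so $c_i$ takes a common value $\kappa$ over $i\in S$. Finally $X(\underline{\lambda})\in\mathcal{E}_{\underline{n},\xi^{-1}}$ gives $0\equiv\sum_i\langle\lambda_i,\varpi_{n_i}\rangle\equiv\kappa\sum_{i\in S}n_i\pmod d$, and $\sum_{i\in S}n_i\equiv\sum_i n_i\equiv 1\pmod d$, so $\kappa=0$; thus each $X(\lambda_i)\in\{\mathbf{1},\varepsilon_{n_i}\}$ and $X(\underline{\lambda})=\varepsilon_{\underline{\delta}}$ for some $\underline{\delta}\in\{0,1\}^r$.

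The hard point is this last $\kappa$-argument: it is exactly what forbids transparent objects of $\mathcal{E}_{\underline{n},\xi^{-1}}$ beyond those already transparent in $\mathcal{D}_{\underline{n},\xi^{-1}}$, and it rests squarely on the hypothesis $\sum_i n_i\equiv 1\pmod d$ (a linear-algebra computation over $\mathbb{Z}/d\mathbb{Z}$ would otherwise leave an undetermined global charge $\kappa$). The remaining work — the invertibility step, with its isolation of the auxiliary index $i'$ and the degenerate factors $n_i\in\{1,d\}$ — is routine bookkeeping built on \cref{prop:symmetric_center}.
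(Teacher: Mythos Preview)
Your proof is correct and follows the same two-step template as the paper: first show each factor $X(\lambda_i)$ is invertible by testing against an object with $X(\varpi_1)$ in slot $i$, then extract the transparency condition on the invertible parameters and use membership in $\mathcal{E}_{\underline{n},\xi^{-1}}$ together with $\sum_i n_i\equiv 1\pmod d$ to conclude. The main difference is in the choice of test objects: the paper uses the single family $\underline{\mu}^{(i)}$ with $\mu_j^{(i)}=-\varpi_{n_j}$ for $j\neq i$ and $\varpi_1-\varpi_{n_i}$ for $j=i$, whose degrees $\langle\mu_j^{(i)},\varpi_{n_j}\rangle=\delta_{i,j}-n_j$ make it land in $\mathcal{E}_{\underline{n},\xi^{-1}}$ automatically and, after the invertibility step, directly give $c_i\equiv\sum_j n_j c_j\equiv 0\pmod d$ without the intermediate linear-algebra reduction to a common charge $\kappa$; your route via a compensating slot $i'$, the set $S$, and the annihilator-of-the-hyperplane argument reaches the same conclusion a bit more circuitously but has the virtue of treating the degenerate case $n_i=d$ explicitly.
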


\begin{proof}
  We use the same strategy as in the proof of \cref{prop:symmetric_center}. The same proof shows that if $X$ is a transparent simple object, then for any simple object $Y$ the morphism $\theta_{X\otimes Y}$ is a scalar multiple of the identity, where $\theta$ is the pivotal structure obtained from the tensor product of the pivotal structures $\theta_{0,-}$. We now fix such a transparent simple object $X(\underline{\lambda})$ and consider for $1 \leq i \leq r$ the simple object $Y_i=X(\underline{\mu}^{(i)})$ where
  \[
    \mu_j^{(i)} =
    \begin{cases}
      -\varpi_{n_j} & \text{ if } j\neq i,\\
      \varpi_1 - \varpi_{n_j}& \text{ if } j=i.
    \end{cases}
  \]\
  Since $\sum_{j=1}^rn_j \equiv 1 [d]$, the object $Y_i$ is in $\mathcal{E}_{\underline{n},\xi^{-1}}$. As in the proof of \cref{prop:symmetric_center}, considering decomposition of the tensor product $X(\underline{\lambda})\otimes Y_i$ shows that $X(\lambda_i)$ is invertible and therefore $X(\underline{\lambda})$ is also invertible.
  
  We hence may and will suppose that for every $1 \leq i \leq r$, there exists $1 \leq l_i \leq n_i$ and $s_i \in \mathbb{Z}$ such that $\lambda_i = (d-n_i)\varpi_{l_i} + s_i \varpi_{n_i}$. We aim to show that $l_i\equiv s_i [d]$ for every $1 \leq i \leq r$. Then $X(\underline{\lambda})$ is transparent if and only if $\theta_{X(\underline{\lambda})\otimes Y(\underline{\mu})} = \theta_{X(\underline{\lambda})} \theta_{Y(\underline{\mu})}$ for any $\underline{\mu}$ which is equivalent to
  \begin{multline*}
    \sum_{j=1}^r\langle\sh^{l_j}(\mu_j) + (d-n_j)\varpi_{l_j} + s_j\varpi_{n_j},\sh^{l_j}(\mu_j) + (d-n_j)\varpi_{l_j} + s_j\varpi_{n_j}+2\rho\rangle \equiv \\
   \sum_{j=1}^r\langle\mu_j,\mu_j+2\rho\rangle+\langle(d-n_j)\varpi_{l_j} + s_j\varpi_{n_j},(d-n_j)\varpi_{l_j} + s_j\varpi_{n_j}+2\rho\rangle\ [2d].
 \end{multline*}
 As in the proof of \cref{prop:symmetric_center}, one may show that this is  the equivalent to
 \[
   \sum_{j=1}^r (s_j-l_j)\langle\mu_j,\varpi_{n_j}\rangle \equiv 0 [d].
 \]
 We once again choose $\underline{\mu}=\underline{\mu}^{(i)}$ so that $\langle\mu_j^{(i)},\varpi_{n_j}\rangle = \delta_{i,j}-n_j$. Hence if $X(\underline{\lambda})$ is transparent, we obtain that $\sum_{j=1}^r n_j(l_j-s_j) + (s_i-l_i) \equiv 0 [d]$. But as $X(\underline{\lambda})$ is in $\mathcal{E}_{\underline{n},\xi^{-1}}$, we have $\sum_{j=1}^rn_j(s_j-l_j) \equiv 0 [d]$ and hence $(s_i-l_i) \equiv 0 [d]$ for every $1 \leq i \leq r$. This shows that $X(\underline{\lambda})$ is isomorphic to an object of the form $\varepsilon_{\underline{\delta}}$.
\end{proof}

It is readily seen that $\varepsilon_{\underline{\delta}}$ is in non trivial degree if there exist $1\leq i \leq r$ such that $\delta_i=1$. Therefore tensoring by $\varepsilon_{\underline{\delta}}\not\simeq \mathbf{1}$ has no fixed points on the set of simple objects. As in \cref{sec:symmetric_center}, we want to (super)modularize the category $\mathcal{E}_{\underline{n},\xi^{-1}}$ and therefore we need that $\varepsilon_{\underline{\delta}}$ has a twist equal to $1$ for the chosen pivotal structure and every $\underline{\delta}\in\{0,1\}^r$. We thus equip $\mathcal{D}_{n_i,\xi^{-1}}$ with a pivotal structure of the form $a_{2p_i+d,-}$ for some $p_i\in \mathbb{Z}$. The category $\mathcal{E}_{\underline{n},\xi^{-1}}$ is then equipped with a pivotal structure that we will denote by $a_{2\underline{p}+d,-}$; the corresponding twist will be denoted by $\theta_{2\underline{p}+d,-}$.

\begin{corollary}
  The symmetric center of $\mathcal{E}_{\underline{n},\xi^{-1}}$ equipped with the pivotal structure $a_{2\underline{p}+d,-}$ is tensor generated by the objects $\varepsilon_{\underline{\delta}}$ for $\delta\in \{0,1\}^r$. Moreover, $\varepsilon_{\underline{\delta}}$ is of quantum dimension $\prod_{\substack{1 \leq i \leq r\\ n_i\equiv d [2]}}(-1)^{\delta_i}$ and of twist $1$.

  The (super)modularization $\tilde{\mathcal{E}}_{\underline{n},\xi^{-1}}$  of $\mathcal{E}_{\underline{n},\xi^{-1}}$ has its objects parameterized by the set
  \[
    \tilde{E}_{\underline{n},d}=\left\{\underline{\lambda} \in \tilde{C}_{n_1,d}\times \cdots \times \tilde{C}_{n_r,d}\ \middle\vert\ \sum_{i=1}^r\langle\lambda_i,\varpi_{n_i}\rangle \equiv 0 [d]\right\}.
  \]
\end{corollary}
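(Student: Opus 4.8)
The plan is to deduce the three assertions in turn from the preceding proposition, \cref{lem:gen_center}, and \cref{lem:equiv-weights}, while keeping track of the $(\mathbb{Z}/2d\mathbb{Z})^r$-grading throughout. The first assertion is essentially immediate: the symmetric center of a braided category does not depend on the pivotal structure, so by the preceding proposition it still has simple objects exactly the $\varepsilon_{\underline\delta}$, $\underline\delta\in\{0,1\}^r$; since $\varepsilon_{n_i}^{\otimes 2}\simeq\mathbf{1}$ in $\mathcal{D}_{n_i,\xi^{-1}}$, these invertible objects form a group isomorphic to $(\mathbb{Z}/2\mathbb{Z})^r$ tensor generated by the $r$ objects obtained by placing $\varepsilon_{n_i}$ in the $i$-th Deligne factor.

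For the second assertion I would use that the quantum dimension and the twist are both multiplicative for the Deligne tensor product endowed with the tensor product pivotal structure $a_{2\underline p+d,-}$, so that $\dim(\varepsilon_{\underline\delta})=\prod_{i}\dim_{2p_i+d}(\varepsilon_{n_i})^{\delta_i}$ and $\theta_{2\underline p+d,\varepsilon_{\underline\delta}}=\prod_{i}\theta_{2p_i+d,\varepsilon_{n_i}}^{\delta_i}$, and then invoke \cref{lem:gen_center} with $k=2p_i+d$: it gives $\dim_{2p_i+d}(\varepsilon_{n_i})=(-1)^{d+n_i-1}$, which equals $-1$ exactly when $n_i\equiv d\ [2]$, and $\theta_{2p_i+d,\varepsilon_{n_i}}=(-1)^{2d+2p_i}=1$. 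Substituting these into the products yields the two claimed formulas; this step is routine parity bookkeeping.

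The third assertion carries the actual content. By the second assertion every transparent simple object has twist $1$ and quantum dimension $\pm1$, so the symmetric center of $\mathcal{E}_{\underline n,\xi^{-1}}$ is a pointed symmetric fusion category that is Tannakian when no $n_i$ is congruent to $d$ mod $2$ and super-Tannakian otherwise; in the latter case one first applies the Bruguières--Müger modularization \cite{bruguieres,muger-galois} to the (index $2$) Tannakian subcategory and is then left with a slightly degenerate category whose symmetric center is $\sVect$ with generator of quantum dimension $-1$ and twist $1$, to which the supermodularization of \cref{sec:slightly-deg} applies. In either case, exactly as for a single tensor factor in \cref{sec:symmetric_center}, the procedure only identifies simple objects lying in one orbit under tensorization by the symmetric center, the action being free because tensoring by any $\varepsilon_{\underline\delta}\neq\mathbf{1}$ moves a simple object into a different degree of the $(\mathbb{Z}/2d\mathbb{Z})^r$-grading. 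Hence $\Irr(\tilde{\mathcal{E}}_{\underline n,\xi^{-1}})$ is the set of orbits of $\Irr(\mathcal{E}_{\underline n,\xi^{-1}})$ under the free action of $(\mathbb{Z}/2\mathbb{Z})^r$ acting coordinate-wise by $\varepsilon_{n_1},\dots,\varepsilon_{n_r}$.

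Finally I would identify these orbits with $\tilde E_{\underline n,d}$ by applying \cref{lem:equiv-weights} coordinate by coordinate: each orbit of $X(\underline\lambda)$ contains a unique representative with $\lambda_i\in\tilde C_{n_i,d}$ for all $i$. Since $\varepsilon_{n_i}$ has degree $d$, tensoring by it changes $\langle\lambda_i,\varpi_{n_i}\rangle$ by $d$ and therefore preserves the congruence $\sum_{i}\langle\lambda_i,\varpi_{n_i}\rangle\equiv 0\ [d]$ defining $\mathcal{E}_{\underline n,\xi^{-1}}$; so this representative automatically lies in $\tilde E_{\underline n,d}$, and conversely every element of $\tilde E_{\underline n,d}$ yields a simple object of $\mathcal{E}_{\underline n,\xi^{-1}}$, with distinct elements giving non-isomorphic objects of $\tilde{\mathcal{E}}_{\underline n,\xi^{-1}}$ by the uniqueness clause of \cref{lem:equiv-weights}. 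The main obstacle is the case distinction in the third step — Tannakian versus super-Tannakian symmetric center and the corresponding two-stage (super)modularization — together with the verification that passing to coordinate-wise representatives preserves the global degree constraint exactly; all the remaining ingredients are already in place.
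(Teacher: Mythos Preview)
Your proposal is correct and follows exactly the implicit reasoning of the paper, which states the corollary without proof: the surrounding text already notes that the $\varepsilon_{\underline\delta}$ sit in non-trivial degree (hence the free action) and explains the choice of pivotal structure $a_{2p_i+d,-}$, so the corollary is meant to follow immediately from the preceding proposition, \cref{lem:gen_center}, and the coordinate-wise use of \cref{lem:equiv-weights}. Your write-up simply makes these routine verifications explicit.
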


Note that the resulting category $\tilde{\mathcal{E}}_{\underline{n},\xi^{-1}}$ is a superfusion category as soon there exists $i$ such that $n_i$ and $d$ have the same parity. We summarize the results on the category $\tilde{\mathcal{E}}_{\underline{n},\xi^{-1}}$ and its modular invariants.

\begin{proposition}
  \label{prop:modular_inv_d1n}
  Recall that we endow the category the category $\tilde{\mathcal{E}}_{\underline{n},\xi^{-1}}$ with the pivotal structure $a_{2\underline{p}+d,-}$. Then there exists a fourth root of unity $\omega$ such that the renormalized $S$-matrix of $\tilde{\mathcal{E}}_{\underline{n},\xi^{-1}}$ is given by
  \[
    \mathbb{S}_{\underline{\lambda},\underline{\mu}} = \frac{\omega}{\sqrt{d}}\prod_{i=1}^r\xi^{-\langle\lambda_i+\mu_i,(2p_i+d)\varpi_{n_i}\rangle-2p_in_i(p_i+n_i-1)}\frac{\sum_{w\in W_{n_i}}(-1)^{l(w)}\xi^{-2\langle w(\lambda_i+\rho_{n_i}),\mu_i+\rho_{n_i}\rangle}}{\sqrt{d}^{n_i}}
  \]
  for any $\underline{\lambda},\underline{\mu}\in \tilde{E}_{\underline{n},d}$. The twist on the simple object $X(\underline{\lambda})$ is given by multiplication by $\xi^{-\sum_{i=1}^r\langle\lambda_i,\lambda_i+2\rho+(2p_i+d)\varpi_{n_i}\rangle}$.
\end{proposition}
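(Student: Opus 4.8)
The plan is to reduce everything to the single--$\gl_{n_i}$ computations of \cref{sec:symmetric_center} by means of the Deligne tensor product, and then to keep track of the passage to the subcategory $\mathcal{E}_{\underline{n},\xi^{-1}}$ and of the subsequent (super)modularization.

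First I would note that the braiding, the pivotal structure $a_{2\underline{p}+d,-}$, the twist and the $S$-matrix of $\mathcal{D}_{\underline{n},\xi^{-1}} = \mathcal{D}_{n_1,\xi^{-1}}\boxtimes\cdots\boxtimes\mathcal{D}_{n_r,\xi^{-1}}$ are, by construction, the Deligne products of the corresponding structures on the factors. Hence the twist of $X(\underline{\lambda})$ and the (unnormalized) entry $S_{X(\underline{\lambda}),X(\underline{\mu})}$ are the products over $i$ of the respective quantities for $\mathcal{D}_{n_i,\xi^{-1}}$ equipped with $a_{2p_i+d,-}$, which are supplied by \cref{prop:S-mat-gln} evaluated at $q=\xi^{-1}$. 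These expressions are inherited verbatim by the full braided subcategory $\mathcal{E}_{\underline{n},\xi^{-1}}$ and then by its (super)modularization $\tilde{\mathcal{E}}_{\underline{n},\xi^{-1}}$: the latter merely identifies simple objects lying in a common $\varepsilon_{\underline{\delta}}$-orbit (with no fixed points, by the $(\mathbb{Z}/2d\mathbb{Z})^r$-grading), and since all the $\varepsilon_{\underline{\delta}}$ have twist $1$ for $a_{2\underline{p}+d,-}$ it changes neither the twists nor the double-braiding traces of the surviving simples. This already yields the asserted formula for the twist, namely the product of the factor twists $\xi^{-\langle\lambda_i,\lambda_i+2\rho_{n_i}+(2p_i+d)\varpi_{n_i}\rangle}$ of \cref{prop:S-mat-gln}.

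It then remains to renormalize the $S$-matrix as in \cref{sec:slightly-deg}, i.e. to divide the product just obtained by $\sqrt{\dim(\tilde{\mathcal{E}}_{\underline{n},\xi^{-1}})}\sqrt{\dim(\bar{\mathbf{1}})}$. For the object $\bar{\mathbf{1}}$ I would take the Deligne product $\bar{\mathbf{1}} = \bar{\mathbf{1}}_1\boxtimes\cdots\boxtimes\bar{\mathbf{1}}_r$ of the invertible objects produced by \cref{prop:1-bar} for the factors: multiplicativity of the $S$-matrix gives it the characterizing property $S_{\bar{\mathbf{1}},Y}=\dim(\bar{\mathbf{1}})\dim(Y^*)$ for every simple $Y$, and one checks that it belongs to $\mathcal{E}_{\underline{n},\xi^{-1}}$ — membership reduces, for this object, to a congruence on the $p_i$ which is exactly the condition that the function $f_0$ with components $i^{(p_i)}$ lie in $\Xi$ (see \cref{prop:ST_d1n}(3)). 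Consequently $\dim(\bar{\mathbf{1}})=\prod_i\dim_{2p_i+d}(\bar{\mathbf{1}}_i)$. For the categorical (super)dimension I would use that $\mathcal{E}_{\underline{n},\xi^{-1}}$ is the trivial component of the faithful $\mathbb{Z}/d\mathbb{Z}$-grading on $\mathcal{D}_{\underline{n},\xi^{-1}}$ obtained by composing the $(\mathbb{Z}/2d\mathbb{Z})^r$-grading with $(\delta_i)_i\mapsto\sum_i\delta_i$, so that $\dim(\mathcal{E}_{\underline{n},\xi^{-1}})=\frac{1}{d}\prod_i\dim(\mathcal{D}_{n_i,\xi^{-1}})$; its symmetric center, as recalled above, consists of the $2^r$ invertible objects $\varepsilon_{\underline{\delta}}$ of dimension $\pm1$, so the (super)modularization divides the dimension by a further factor $2^r$, while $\dim(\mathcal{D}_{n_i,\xi^{-1}})=2\dim(\tilde{\mathcal{D}}_{n_i,\xi^{-1}})$ with $\dim(\tilde{\mathcal{D}}_{n_i,\xi^{-1}})$ given by \cref{lem:cat-dimension} at $q=\xi^{-1}$. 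Combining this with the renormalized $S$-matrix of each factor $\tilde{\mathcal{D}}_{n_i,\xi^{-1}}$ from \cref{thm:modular_data_gln} and collecting all signs, powers of $\sqrt{d}$ and fourth roots of unity into a single fourth root of unity $\omega$ then gives the asserted formula.

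The conceptual steps are routine; the real work is the careful bookkeeping of the normalization constants — the global power of $\sqrt{d}$ and the fourth root of unity $\omega$ — through the chain ``Deligne product, then braided subcategory, then (super)modularization'', together with the two inputs it needs: the verification that $\bar{\mathbf{1}}\in\mathcal{E}_{\underline{n},\xi^{-1}}$ (the only place where the choice of the $p_i$ and the constraint $f_0\in\Xi$ actually enter) and the dimension count for $\tilde{\mathcal{E}}_{\underline{n},\xi^{-1}}$. One further point to handle is that $\tilde{\mathcal{E}}_{\underline{n},\xi^{-1}}$ is an ordinary fusion category precisely when no $n_i$ has the same parity as $d$ and a superfusion category otherwise; but the renormalization recipe of \cref{sec:slightly-deg} applies uniformly in both cases — the symmetric center always having total dimension $2^r$ — so this dichotomy does not affect the final formula.
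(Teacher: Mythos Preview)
Your proposal is correct and follows the same route as the paper: deduce the formula from \cref{thm:modular_data_gln} applied factor by factor through the Deligne product, with the extra $1/\sqrt{d}$ coming from $\dim(\mathcal{E}_{\underline{n},\xi^{-1}})=\dim(\mathcal{D}_{\underline{n},\xi^{-1}})/d$ via the grading. The paper's own proof consists of exactly those two observations; your version simply spells out the intermediate bookkeeping (the $2^r$ from the symmetric center, the identification of $\bar{\mathbf{1}}$ as $\boxtimes_i\bar{\mathbf{1}}_i$, the uniform treatment of the fusion and superfusion cases) that the paper leaves implicit. One minor remark: the paper states the proposition for arbitrary $p_i$ and does not impose the constraint $f_0\in\Xi$ at this stage (that choice is only made afterwards, in \S\ref{sec:fourier_modular_inv_d1n}), so the membership check for $\bar{\mathbf{1}}$ that you single out is a point of extra care on your part rather than something the paper treats.
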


\begin{proof}
  This follows immediately from \cref{thm:modular_data_gln}. The extra $\sqrt{d}$ at the denominator comes from the fact we work with the modularization of the subcategory $\mathcal{E}_{\underline{n},\xi^{-1}}$ and not with the modularization of the whole category $\mathcal{D}_{\underline{n},\xi^{-1}}$. Indeed, $\dim(\mathcal{E}_{\underline{n},\xi^{-1}}) = \dim(\mathcal{D}_{\underline{n},\xi^{-1}})/d$ thanks to the grading.
\end{proof}

\subsection{Fourier matrix and eigenvalues of the Frobenius as modular invariants}
\label{sec:fourier_modular_inv_d1n}

Finally, as expected, we recover the Fourier matrix $\mathcal{S}$ and the eigenvalues of the Frobenius $\mathcal{T}$ of \cref{sec:fourier_ext} from the category $\tilde{\mathcal{E}}_{\underline{n},\xi^{-1}}$. We choose the integers $p_1,\ldots,p_r$ as in \cref{prop:ST_d1n}, that is such that $f_0$ defined by $(f_0)_i =i^{(p_i)}\in I_{n_i,d}$ is in $\Xi$. This condition amounts to
\[
  \sum_{i=1}^r \left(p_in_i + \binom{n_i}{2}\right) \equiv m\binom{d}{2} [d].
\]

Using the various maps $\tilde{\iota}_p$ from \cref{sec:relationship}, we define a map $\tilde{\iota}_{\underline{n},\underline{p}} \colon \Xi \rightarrow \tilde{E}_{\underline{n},d}$ by
\[
  \tilde{\iota}_{\underline{n},\underline{p}}(f)_i = \tilde{\iota}_{n_i,p_i}(f_i),
\]
where $f_i\in I_{n_i,d}$ is as in \cref{sec:fourier_ext}. Note that $\tilde{\iota}_{\underline{n},\underline{p}}(f)$ is indeed in $\tilde{E}_{\underline{n},d}$ since
\[
  \sum_{i=1}^r\langle\tilde{\iota}_{n_i,p_i}(f_i),\varpi_n\rangle = \sum_{y\in Y}f(y) - \sum_{i=1}^r \left(p_in_i + \binom{n_i}{2}\right) \equiv 0 [d],
\]
the last equality following from the fact that $f\in \Xi$ and that the integers $p_1,\ldots,p_r$ are chosen such that $f_0 \in \Xi$. The map $\tilde{\iota}_{\underline{n},\underline{p}}$ is bijective since for every $p$ the map $\tilde{\iota}_{p}$ is bijective and that $\lvert \Xi \rvert = \frac{1}{d}\prod_{i=1}^r\binom{d}{n_i} = \lvert\tilde{E}_{\underline{n},d}\rvert$.

\begin{theorem}
  \label{thm:categorification_d1n}
  We keep the above notations. The (super)category of $\tilde{\mathcal{E}}_{\underline{n},\xi^{-1}}$ is a categorification of the modular datum defined by $\mathcal{S}$ and $\mathcal{T}$: there exist a fourth root of unity $\omega$ and signs $(\sigma_f)\in\{\pm 1\}^{\Xi}$ with $\sigma_{f_0}=1$ such that
\[
  \mathbb{S}_{X(\tilde{\iota}_{\underline{n},\underline{p}}(f)),X(\tilde{\iota}_{\underline{n},\underline{p}}(g))} = \omega\sigma_f\sigma_g\mathcal{S}_{f,g} \quad\text{and}\quad \theta_{d+2\underline{p},X(\tilde{\iota}_p(a))}= \Fr(f_0)^{-1}\mathcal{T}_f.
\]

The (super)Grothendieck ring of $\tilde{\mathcal{E}}_{\underline{n},\xi^{-1}}$ is then isomorphic to the ring defined by $\mathcal{S}$ with unit parameterized by $f_0$.
\end{theorem}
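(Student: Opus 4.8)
The plan is to reduce the statement, block by block, to \cref{thm:categorification} applied to each tensor factor $\tilde{\mathcal{D}}_{n_i,\xi^{-1}}$, and then reassemble. First I would record what \cref{prop:modular_inv_d1n} already gives: the renormalized $S$-matrix $\mathbb{S}_{\underline{\lambda},\underline{\mu}}$ of $\tilde{\mathcal{E}}_{\underline{n},\xi^{-1}}$ is, up to one global fourth root of unity and an overall power of $\sqrt{d}$ (coming from $\dim(\mathcal{E}_{\underline{n},\xi^{-1}}) = \dim(\mathcal{D}_{\underline{n},\xi^{-1}})/d$), the product over $i$ of the renormalized $S$-matrix entries of $\tilde{\mathcal{D}}_{n_i,\xi^{-1}}$ at $(\lambda_i,\mu_i)$, and the twist on $X(\underline{\lambda})$ is the product over $i$ of the twists of $X(\lambda_i)$ in $\tilde{\mathcal{D}}_{n_i,\xi^{-1}}$. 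Since $\tilde{\iota}_{\underline{n},\underline{p}}$ is defined block by block from the maps $\tilde{\iota}_{n_i,p_i}$ of \cref{sec:relationship}, putting $\underline{\lambda} = \tilde{\iota}_{\underline{n},\underline{p}}(f)$ and $\underline{\mu} = \tilde{\iota}_{\underline{n},\underline{p}}(g)$ turns the $i$-th factor into the entry $\mathbb{S}^{(i)}_{X(\tilde{\iota}_{n_i,p_i}(f_i)),X(\tilde{\iota}_{n_i,p_i}(g_i))}$ of $\tilde{\mathcal{D}}_{n_i,\xi^{-1}}$, and similarly for the twist.

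Next I would apply \cref{thm:categorification} to $\tilde{\mathcal{D}}_{n_i,\xi^{-1}}$ equipped with the pivotal structure $a_{2p_i+d,-}$, with $p = p_i$. Every quantity entering \cref{sec:qgl} and \cref{sec:relationship} is a Laurent polynomial in $\xi$, so running that material at $\xi^{-1} = \overline{\xi}$ simply complex-conjugates all the formulas; in particular $\bigwedge^{n_i}S$ is replaced by $\overline{\bigwedge^{n_i}S}$, $\bigwedge^{n_i}T$ by $\overline{\bigwedge^{n_i}T}$ and $\zeta_*$ by its conjugate. Thus \cref{thm:categorification} produces a fourth root of unity $\omega_i$ and signs $(\sigma^{(i)}_a)_{a\in I_{n_i,d}}$ with $\sigma^{(i)}_{i^{(p_i)}}=1$ such that $\mathbb{S}^{(i)}_{X(\tilde{\iota}_{n_i,p_i}(f_i)),X(\tilde{\iota}_{n_i,p_i}(g_i))} = \omega_i\sigma^{(i)}_{f_i}\sigma^{(i)}_{g_i}\overline{(\bigwedge^{n_i}S)_{f_i,g_i}}$ and the twist of $X(\tilde{\iota}_{n_i,p_i}(f_i))$ equals $\overline{\zeta_*^{(i)}}\,\overline{(\bigwedge^{n_i}T)_{f_i}}$. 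Multiplying over $i$ and comparing with the definition of $\mathcal{S}_{f,g}$ in \cref{sec:fourier_ext} — which differs from $\prod_i\overline{(\bigwedge^{n_i}S)_{f_i,g_i}}$ only by the explicit scalar $(-1)^{m(d-1)}i^{-\binom{d-1}{2}m}\sqrt{d}$ and the sign $\varepsilon(f)\varepsilon(g)$ — I obtain $\mathbb{S}_{X(\tilde{\iota}_{\underline{n},\underline{p}}(f)),X(\tilde{\iota}_{\underline{n},\underline{p}}(g))} = \omega\,\sigma_f\sigma_g\,\mathcal{S}_{f,g}$, where $\omega$ is a fourth root of unity (the product of the $\omega_i$, the global fourth root from \cref{prop:modular_inv_d1n}, the explicit power of $i$, and the residual power of $\sqrt{d}$) and $\sigma_f := \varepsilon(f)\prod_{i=1}^r\sigma^{(i)}_{f_i}$. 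Because $(f_0)_i = i^{(p_i)}$ we have $\sigma^{(i)}_{(f_0)_i}=1$ for all $i$, so $\sigma_{f_0}=\varepsilon(f_0)=\pm1$; multiplying every $\sigma_f$ by $\varepsilon(f_0)$ (which only changes $\omega$ by a sign) arranges $\sigma_{f_0}=1$. For the twist, multiplying over $i$ gives $\theta_{2\underline{p}+d,X(\tilde{\iota}_{\underline{n},\underline{p}}(f))} = \big(\prod_i\overline{\zeta_*^{(i)}}\big)\prod_i\overline{(\bigwedge^{n_i}T)_{f_i}}$, which by the identity $\Fr(f)=c\prod_i\overline{(\bigwedge^{n_i}T)_{f_i}}$ recalled in \cref{sec:fourier_ext} ($c$ independent of $f$) is a constant times $\mathcal{T}_f=\Fr(f)$; evaluating at $f=f_0$, where $\tilde{\iota}_{\underline{n},\underline{p}}(f_0)=\underline{0}$ parameterizes the unit object and hence the twist is $1$, forces that constant to be $\Fr(f_0)^{-1}$, giving $\theta_{2\underline{p}+d,X(\tilde{\iota}_{\underline{n},\underline{p}}(f))} = \Fr(f_0)^{-1}\mathcal{T}_f$.

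For the statement about the Grothendieck ring: $\tilde{\mathcal{E}}_{\underline{n},\xi^{-1}}$ is non-degenerate — its symmetric center, computed just above, becomes trivial (resp.\ equivalent to $\sVect$, in the super case) after (super)modularization — so by the results of \cref{sec:slightly-deg} its associated fusion algebra $A_{\mathbb{S}}$ is precisely its (super)Grothendieck ring. Since $\mathbb{S} = \omega\,\Sigma\,\mathcal{S}\,\Sigma$ with $\Sigma$ the diagonal matrix with diagonal entries $(\sigma_f)_{f\in\Xi}$, \cref{lem:change_signs} together with the remark that $A_{\omega\mathbb{S}}\simeq A_{\mathbb{S}}$ yields an isomorphism of fusion algebras $A_{\mathbb{S}}\simeq A_{\mathcal{S}}$, that is, the ring defined by $\mathcal{S}$ with unit parameterized by $f_0$.

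The only genuine work is the bookkeeping in the middle step: one must check that the various fourth roots of unity (the per-block $\omega_i$, the global one furnished by \cref{prop:modular_inv_d1n}, and the explicit constant $(-1)^{m(d-1)}i^{-\binom{d-1}{2}m}$) together with the leftover powers of $\sqrt{d}$ really do combine into a single fourth root of unity, and — the subtler point — that the combinatorial sign $\varepsilon(f)$ attached to the symbol $f$ interacts with the signs $\sigma^{(i)}_{f_i}$ exactly so that the total correction factor has the product form $\sigma_f\sigma_g$. Everything else is a direct substitution into \cref{prop:modular_inv_d1n} and \cref{thm:categorification}.
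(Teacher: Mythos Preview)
Your proposal is correct and follows exactly the approach the paper indicates: the paper's own proof is the single line ``similar to the one of \cref{thm:categorification} using the modular invariants of $\tilde{\mathcal{E}}_{\underline{n},\xi^{-1}}$ given in \cref{prop:modular_inv_d1n}, and is then omitted'', and you have simply unpacked this by applying \cref{thm:categorification} blockwise and reassembling via \cref{prop:modular_inv_d1n}, correctly tracking the complex conjugation from $\xi^{-1}$, the extra $\sqrt{d}$, and the absorption of $\varepsilon(f)$ into $\sigma_f$. One harmless slip: multiplying all $\sigma_f$ by $\varepsilon(f_0)$ leaves $\sigma_f\sigma_g$ (and hence $\omega$) unchanged since $\varepsilon(f_0)^2=1$, so the normalization $\sigma_{f_0}=1$ comes for free rather than at the cost of a sign on $\omega$.
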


\begin{proof}
  The proof is similar to the one of \cref{thm:categorification} using the modular invariants of $\tilde{\mathcal{E}}_{\underline{n},\xi^{-1}}$ given in \cref{prop:modular_inv_d1n}, and is then omitted.
\end{proof}

As a corollary, we obtain an independent proof of the integrality of the structure constants defined by the matrix $\mathcal{S}$, and moreover that the absolute value of these structure constants also define an associative ring.

Finally, note that if we choose for $f_0$ the special symbol as in \cite[Bemerkung 2.25]{unipotente}, then it moreover satisfies $\Fr(f_0) = 1$ and the eigenvalues of the Frobenius coincide with the twist in $\tilde{E}_{\underline{n},\xi^{-1}}$.

\subsection{Ennola $d$-ality}
\label{sec:ennola}

Given an element $f\in \Xi$, Malle has defined a polynomial $\gamma_f(q)$ which behavior is similar to the degrees of the unipotent characters of a finite group of Lie type. In particular, a property similar to the Ennola duality exists, but ii is rather a $d$-ality. There exists a bijection $E \colon \Xi \rightarrow \Xi$ such that, up to a sign, the polynomials $\gamma_{f}(\zeta q)$ and $\gamma_{E(f)}(q)$ coincide up to a sign. Therefore $E^d$ is the identity. This bijection is defined explicitly by Malle \cite[Folgerung 3.11]{unipotente} in terms of $d$-symbols, and we give the translation in terms of functions in $\Xi$. Let $f\in \Xi$. Its Ennola transform is the unique function $E(f)\in \Xi$ such that $E(f)_i$ is given by reducing modulo $0$ and sorting increasingly the set $f(\pi^{-1}(w_i))+w_i-\sum_{k=1}^r w_kn_k$. It is easily checked that $E(f)$ indeed belongs to $\Xi$.

\begin{proposition}
  Let $\eta$ be the invertible object $X(\underline{\eta})$ with $\eta_i = (w_i-\sum_{k=1}^r w_kn_k)\varpi_{n_i}$. Then the objects $X(\tilde{\iota}_{\underline{n},\underline{p}}(f))\otimes \eta$ and $X(\tilde{\iota}_{\underline{n},\underline{p}}(E(f)))$ are isomorphic in $\mathcal{E}_{\underline{n},\xi^{-1}}$: the Ennola $d$-ality is given by tensoring by an invertible object of trivial $d$-th tensor power.
\end{proposition}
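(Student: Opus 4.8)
The plan is to reduce the proposition to a one-factor statement and then to the combinatorial observation that, under the bijections $\tilde{\iota}_{n,p}\colon I_{n,d}\to\tilde{C}_{n,d}$ of \cref{sec:relationship}, tensoring by the determinant object implements ``add $1$ modulo $d$'' on the indexing sets $I_{n,d}$. First I would check that $\eta$ indeed lies in $\mathcal{E}_{\underline{n},\xi^{-1}}$: writing $e_i=w_i-\sum_{k=1}^r w_kn_k$, the degree of $\eta=X(e_1\varpi_{n_1})\boxtimes\cdots\boxtimes X(e_r\varpi_{n_r})$ is $\sum_{i=1}^r\langle e_i\varpi_{n_i},\varpi_{n_i}\rangle=\sum_{i=1}^r e_in_i=\sum_{i=1}^r w_in_i-(md+1)\sum_{k=1}^r w_kn_k$, which is $\equiv0\ [d]$ since $\sum_{i=1}^r n_i=md+1$. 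Each $X(e_i\varpi_{n_i})$ is the invertible object $\moddet{\xi^{-1}}^{\otimes e_i}$ of $\mathcal{D}_{n_i,\xi^{-1}}$, so $X(\tilde{\iota}_{\underline{n},\underline{p}}(f))\otimes\eta$ is the Deligne product over $i$ of the objects $X(\tilde{\iota}_{n_i,p_i}(f_i))\otimes\moddet{\xi^{-1}}^{\otimes e_i}$; hence it is enough to establish, in $\tilde{\mathcal{D}}_{n_i,\xi^{-1}}$ for each $i$,
\[
  X(\tilde{\iota}_{n_i,p_i}(f_i))\otimes\moddet{\xi^{-1}}^{\otimes e_i}\ \simeq\ X(\tilde{\iota}_{n_i,p_i}(E(f)_i)),
\]
after which the Deligne product yields the asserted isomorphism in $\tilde{\mathcal{E}}_{\underline{n},\xi^{-1}}$ (equivalently, componentwise equivalence of weight tuples under $\sim$), and the remark that $\eta$ has trivial $d$-th tensor power is then immediate, since $\moddet{\xi^{-1}}^{\otimes d}\simeq\varepsilon_{n_i}^{\otimes n_i}$ becomes trivial after modularization.

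For the one-factor statement, fix $i$ and write $n=n_i$, $p=p_i$. Since $\moddet{\xi^{-1}}$ is invertible, tensoring by it permutes the simple objects of $\tilde{\mathcal{D}}_{n,\xi^{-1}}$, hence, through $\tilde{\iota}_{n,p}$, induces a permutation $\sigma$ of $I_{n,d}$ with $X(\tilde{\iota}_{n,p}(a))\otimes\moddet{\xi^{-1}}\simeq X(\tilde{\iota}_{n,p}(\sigma(a)))$. I claim $\sigma(a)$ is the tuple obtained from $(a_1+1,\dots,a_n+1)$ by reducing modulo $d$ and sorting increasingly. To prove this, use $\iota_{n,p}(a)+\varpi_n=\iota_{n,p-1}(a)$, so it suffices to compare $\iota_{n,p-1}(a)$ with $\iota_{n,p}(b)$, where $b$ is the reordered tuple. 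If $a_n<d-1$ then $b=(a_1+1,\dots,a_n+1)$ and $\iota_{n,p}(b)=\iota_{n,p-1}(a)$ on the nose; if $a_n=d-1$ then $b=(0,a_1+1,\dots,a_{n-1}+1)$, and unwinding $w_0\bigl(\sum_j b_j\varepsilon_j\bigr)-\rho$ coordinate by coordinate one finds
\[
  \iota_{n,p-1}(a)\ =\ \sh\bigl(\iota_{n,p}(b)\bigr)+(d-n)\varpi_1+\varpi_n,
\]
that is, $\iota_{n,p-1}(a)\sim_{\elem}\iota_{n,p}(b)$ in the sense of \cref{sec:mod_data_gln}. In both cases $\iota_{n,p-1}(a)\sim\iota_{n,p}(b)$, hence $X(\tilde{\iota}_{n,p-1}(a))\simeq X(\tilde{\iota}_{n,p}(b))$ in $\tilde{\mathcal{D}}_{n,\xi^{-1}}$, which is exactly the claim.

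It follows that tensoring by $\moddet{\xi^{-1}}^{\otimes c}$ induces $\sigma^c$ for every $c\in\mathbb{Z}$, and $\sigma^c(a)$ is $(a_1+c,\dots,a_n+c)$ reduced modulo $d$ and sorted; here one uses $\lambda\sim\lambda+d\varpi_n$ (from the proof of \cref{lem:equiv-weights}) to see that replacing $c$ by $c+d$ changes nothing. Taking $c=e_i$ and recalling that $E(f)_i$ is, by Malle's definition, precisely the increasing reordering modulo $d$ of $(f_{i,1}+e_i,\dots,f_{i,n_i}+e_i)$, we obtain the one-factor isomorphism above, and assembling the factors proves the proposition. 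The only genuine computation is the case $a_n=d-1$ in the previous paragraph: one must keep careful track of which $a_j$ appears in which coordinate of $w_0\bigl(\sum_j b_j\varepsilon_j\bigr)-\rho$ in order to recognise the result as $\sh$ applied to $\iota_{n,p}(b)$ plus the shift $(d-n)\varpi_1+\varpi_n$ coming from tensoring by $\varepsilon_n$; the off-by-one in that indexing is the one point where attention is needed, and the rest is routine bookkeeping.
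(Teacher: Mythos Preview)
Your proof is correct and follows the same strategy as the paper's: check the degree condition so that $\eta\in\mathcal{E}_{\underline{n},\xi^{-1}}$, reduce to a componentwise statement (since each $X(\eta_i)$ is a power of $\moddet{\xi^{-1}}$), and then verify that $\iota_{n_i,p_i}(f_i)+e_i\varpi_{n_i}\sim\iota_{n_i,p_i}(E(f)_i)$. The paper dismisses this last step as ``immediate by definition of $\eta$''; you instead prove it carefully by iterating the single-step claim that tensoring by $\moddet{\xi^{-1}}$ corresponds, through $\tilde\iota_{n,p}$, to ``add $1$ modulo $d$ and sort'' on $I_{n,d}$, handling the wrap-around case $a_n=d-1$ by an explicit identification with $\sh(\iota_{n,p}(b))+(d-n)\varpi_1+\varpi_n$. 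This is a clean and rigorous way to justify the step the paper leaves implicit, and your remark on $\eta^{\otimes d}$ being trivial after (super)modularization is the correct reading of ``trivial $d$-th tensor power''.
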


\begin{proof}
  It is clear that $\eta$ is indeed an object in $\mathcal{E}_{\underline{n},\xi^{-1}}$, that is that $\sum_{i=1}^r\langle\eta_i,\varpi_{n_i}\rangle \equiv 0 [d]$.
  
  As $X(\eta_i)$ is a tensor power of $\det_{n_i,\xi^{-1}}$, it suffices to show that for every $1 \leq i \leq r$ we have $\iota_{\underline{n},\underline{p}}(f)+\eta_i \sim \iota_{\underline{n},\underline{p}}(E(f))$, which is immediate by definition of $\eta$.
\end{proof}

%%% Local Variables:
%%% mode: latex
%%% TeX-master: "../cat_exterior_powers_gd1n"
%%% End:

%\input{sections/fourier_ddn.tex}

%%%%%%%%%%%%%%%%%%%%%%
%    Bibliography    %
%%%%%%%%%%%%%%%%%%%%%%

%%%%%%%%%%%%%%%%%%%%%%%%%%%%%%%%%%%%
%                 					  				  		 %
%	Bibliography   				 					 %
%                 					  						 %
%%%%%%%%%%%%%%%%%%%%%%%%%%%%%%%%%%%%

\bibliographystyle{bibliography/habbrv}
\bibliography{bibliography/biblio}

%%%%%%%%%%%%%%%%	End of file	%%%%%%%%%%%%%

\end{document}